\documentclass[hidelinks,onefignum,onetabnum]{siamart251216}

\usepackage{mathtools}
\usepackage{amsfonts,amssymb}
\usepackage{booktabs}
\usepackage{array}
\usepackage{xcolor}
\usepackage{microtype}
\usepackage{enumitem}
\usepackage[numbers,sort&compress]{natbib}
\usepackage{algorithm}
\usepackage{algpseudocode}

\ifpdf
  \DeclareGraphicsExtensions{.pdf,.png,.jpg,.jpeg}
\fi

\newsiamremark{remark}{Remark}
\newsiamremark{assumption}{Assumption}
\crefname{assumption}{Assumption}{Assumptions}
\Crefname{assumption}{Assumption}{Assumptions}

\newcommand{\RLleft}[2]{{}_{-\infty}D^{#1}_{#2}}
\newcommand{\RLright}[2]{{}_{#2}D^{#1}_{\infty}}
\newcommand{\Cleft}[2]{{}^{C}_{-\infty}D^{#1}_{#2}}
\newcommand{\Lcgmy}{\mathcal{L}_{\mathrm{CGMY}}}
\newcommand{\bsphi}{\phi}

\newcommand{\Aminus}{A^{-}}
\newcommand{\Aplus}{A^{+}}
\newcommand{\norm}[1]{\left\|#1\right\|}

\newcommand{\vp}{\ensuremath{\mathrm{vp}}}

\headers{B-Spline Galerkin CGMY Pricing}{D. Damircheli}

\title{A B-Spline Galerkin Method for Option Pricing under the CGMY Tempered Stable Process}

\author{Davood Damircheli\thanks{Department of Mathematics, Louisiana State University, Baton Rouge, LA 70803, USA.}}

\ifpdf
\hypersetup{
  pdftitle={A B-Spline Galerkin Method for Option Pricing under the CGMY Tempered Stable Process},
  pdfauthor={Davood Damircheli}
}
\fi

\begin{document}

\maketitle

\begin{abstract}
We develop and analyze a B-spline scaling-function Galerkin method for the
tempered fractional partial differential equation that governs European and
American option prices under the CGMY tempered stable L\'evy process. The
spatial generator couples exponentially tempered left- and right-sided
Riemann--Liouville derivatives of order $Y\in(0,2)$, whose nonlocality and
asymmetry make accurate deterministic discretization challenging. We derive a
corrected Fourier-symbol construction of the tempered connection-coefficient
matrices whose interior blocks are exactly Toeplitz, carry the correct
fractional scaling, and admit $O(N\log N)$ matrix--vector products by the
fast Fourier transform. In the infinite-variation regime $Y\in(1,2)$ we prove
full-line and finite-domain G{\aa}rding inequalities --- coercive whenever
the risk-free rate is positive --- by a zero-extension reduction to the
full-line symbol, and on this foundation establish elliptic-projection,
semidiscrete, and fully discrete error estimates with spatial rate
$O(h^{p-Y/2})$, together with an unconditional shifted-energy stability
estimate for the forced Crank--Nicolson scheme with mesh-independent
constants; for positive rates the admissible shift vanishes, so the theory
governs the plain scheme exactly as implemented. A single backward solve
prices an entire strike strip at fixed maturity with smooth, analytically
differentiated Greeks, and the same discretization prices American puts
through a projected linear complementarity solve, with early-exercise
premiums and exercise boundaries validated against a resolved deterministic
reference and a Longstaff--Schwartz Monte Carlo cross-check. Calibration to
S\&P~500 option surfaces across low-volatility, crisis, and recovery regimes
attains implied-volatility errors near two vol-points on the well-identified
surfaces, and an accuracy-matched comparison with a stabilized
Gr\"unwald--Letnikov baseline delineates precisely when the high-order
variational structure pays off.
\end{abstract}

\begin{keywords}
fractional PDE, CGMY process, tempered stable distribution,
B-spline scaling functions, Galerkin method, connection coefficients,
Riemann--Liouville derivative, option pricing, model calibration
\end{keywords}

\begin{MSCcodes}
65M60, 65M12, 65T60, 91G20, 60G51, 26A33
\end{MSCcodes}

\section{Introduction}
\label{sec:intro}

Exponential L\'{e}vy models correct the well-documented failure of the
Gaussian Black--Scholes framework to reproduce volatility smiles, skewness, and
heavy tails in observed option data \citep{BlackScholes1973,ContTankov2004}.
Among these models the Carr--Geman--Madan--Yor (CGMY) tempered stable process
is distinguished by its ability to represent both the finite arrival of large
jumps and the infinite activity of small ones through a single fine-structure
index $Y$ \citep{CarrGemanMadanYor2002}. Under CGMY dynamics the European and
American pricing problems become nonlocal tempered fractional PDEs: in the
log-price variable the spatial generator couples left- and right-sided
exponentially tempered Riemann--Liouville derivatives of order $Y$. These
operators are nonlocal and nonsymmetric, and standard discretizations produce
dense matrices whose assembly and application dominate the cost of the repeated
solves required for implied-volatility calibration and for the smooth Greeks
demanded by risk management.

Existing deterministic approaches each address part of this difficulty.
Finite-difference and PIDE schemes must treat the singular small-jump behaviour
of the L\'{e}vy measure and typically lead to dense or carefully compensated
operators \citep{ContVoltchkova2005}; a naive uncompensated implementation is
inaccurate or unstable in the infinite-variation regime, whereas a stabilized
implementation is competitive at moderate accuracy. Fourier and Fourier-cosine
(COS) methods are extremely efficient for European pricing when a characteristic
function is available \citep{FangOosterlee2008,Oosterlee2019}, but their global
transforms are less naturally aligned with localized bases, boundary forcing,
and early-exercise constraints. Wavelet and multiresolution methods provide
Riesz-basis preconditioning and compression \citep{Cohen2003}, yet the exact
B-spline connection-coefficient structure for \emph{exponentially tempered}
fractional operators, and the resulting Toeplitz interior blocks, have not been
exploited; many constructions instead require singular-integral
regularizations. Monte Carlo remains the universal fallback for path-dependent
CGMY payoffs, but exact increment simulation is difficult
\citep{MadanSeneta1990,AsmussenRosinski2001,GaoFu2011}, and its
$\mathcal{O}(1/\sqrt{M})$ probabilistic error and point-by-point evaluation make
it ill-suited to producing smooth surfaces and stable Greeks across a full
calibration grid. These considerations motivate a deterministic, high-order,
structure-preserving surface solver.

We propose a B-spline scaling-function Galerkin method for the CGMY backward
FPDE. Using translates of compactly supported cardinal B-splines at a fixed
resolution level, we derive exact tempered connection coefficients whose
interior matrices are Toeplitz, so that matrix--vector products cost
$\mathcal{O}(N\log N)$ by FFT rather than $\mathcal{O}(N^2)$. We develop the
stability and convergence theory in the infinite-variation regime
$Y\in(1,2)$, where the jump singularity is most pronounced, and we exploit the
fact that a single backward solve prices an entire strike range at a fixed
maturity, providing a foundation for risk-neutral calibration directly under
$\mathbb{Q}$ \citep{HainautColwell2014}.

The main contributions of this paper are threefold.

\emph{Operator structure.} We derive corrected Fourier-symbol representations
for the connection-coefficient matrices $\Aminus$ and $\Aplus$ of the
exponentially conjugated left- and right-sided tempered Riemann--Liouville
operators. The full-line interior matrices are Toeplitz, carry the correct
fractional scaling factor $2^{jY}$, and admit FFT-based matrix--vector products
in $O(N\log N)$ operations; closed-form positive-part formulae in the
untempered limit serve as implementation checks.

\emph{Analysis.} For the infinite-variation regime $Y\in(1,2)$ we prove
full-line \emph{and} finite-domain G{\aa}rding inequalities; the finite-domain
inequality is obtained by a zero-extension reduction to the full-line Fourier
symbol, so that no unverified coercivity assumption remains.  A monotonicity
property of the tempered symbol shows that both inequalities are in fact
coercive ($c_2=0$) whenever the risk-free rate $r$ is positive, which is the
case in every experiment reported here.  On this foundation we prove a shifted
elliptic projection estimate, a semidiscrete parabolic error estimate, and a
fully discrete convergence bound with spatial rate $O(h^{p-Y/2})$ for B-spline
order $p$. We also establish a forced, shifted-energy Crank--Nicolson
stability estimate with constants independent of $h$ and $\Delta\tau$,
controlling the growth of the option variable in the presence of boundary
forcing; for $r>0$ the admissible shift is $\sigma=0$, so the estimate governs
the plain Crank--Nicolson iteration exactly as implemented.

\emph{Validation and applications.} We verify the predicted orders on
manufactured and European benchmarks, extend the same Galerkin operator to
American puts through a projected-SOR linear complementarity solve, and
calibrate the model to real S\&P~500 surfaces across three market regimes. A
work--precision comparison and a crisis-date calibration benchmark against a
stabilized Gr\"unwald--Letnikov solver show that the first-order baseline is
faster at loose price tolerances and in the reported Gamma sweep, whereas the
B-spline method is the more reliable high-order variational solver when tight
pricing accuracy, exact spline-based Greek recovery, or compatibility with the
American obstacle problem is required.

Section~\ref{sec:mathsetup} derives the CGMY backward FPDE, introduces the
weighted fractional Sobolev space, and establishes the variational
formulation.
Section~\ref{sec:bspline_wavelets} reviews B-spline scaling functions and
derives the connection coefficient matrices.
Section~\ref{sec:galerkin_scheme} describes the B-spline Galerkin spatial
discretisation and Crank--Nicolson time-stepping.
Section~\ref{sec:analysis} proves convergence and stability.
Section~\ref{sec:numerics} presents numerical experiments including
manufactured-solution convergence verification, comparison with
GL and COS benchmarks, implied-volatility and Greek calculations, and the
American put extension.
Section~\ref{sec:calibration} describes the calibration methodology and
reports results on S\&P 500 data.
Section~\ref{sec:conclusion} concludes and outlines extensions.
Appendix~\ref{app:operators} collects notation for all fractional operators
used in the paper.  Complete proofs of the semidiscrete, stability, and fully
discrete theorems, auxiliary proofs for the connection coefficients, and the
full calibration pipeline are collected in the accompanying supplementary
materials.
\section{Mathematical Setup}
\label{sec:mathsetup}

\subsection{The CGMY Backward FPDE and Risk-Neutral Parameters}
\label{ssec:cgmy_fpde}

Let $S_t=e^{X_t}$ be the asset price under a risk-neutral measure
$\mathbb{Q}$, and let $V(S,t)$ be the price of a European payoff
$\psi(S_T)$ at maturity $T$.  Throughout the paper we use the
\emph{log-price} variable
\begin{equation}
\label{eq:log_price_convention}
x=\log S,
\end{equation}
not log-moneyness.  This convention is consistent with the put payoff
$\psi(x)=(K-e^x)^+$ and with the Greeks formulae in
Section~\ref{ssec:greeks}.

The CGMY parameters are assumed to satisfy
\begin{equation}
\label{eq:cgmy_parameter_assumption}
C>0,\qquad G>0,\qquad M>1,\qquad Y\in(0,2),\qquad Y\ne 1 .
\end{equation}
The condition $M>1$ is the risk-neutral exponential-moment condition needed
for the stock-price martingale correction below to be real-valued.  Indeed,
for noninteger $Y$, the term $(M-1)^Y$ is real only under this condition.  The
martingale correction is
\begin{equation}
\label{eq:omega}
\omega_S
= C\Gamma(-Y)\bigl[(M-1)^Y - M^Y + (G+1)^Y - G^Y\bigr].
\end{equation}
Define also
\begin{equation}
\label{eq:lambda0_def}
\lambda_0 := r + C\Gamma(-Y)(M^Y+G^Y).
\end{equation}

Following Cartea and del Castillo-Negrete~\cite{CarteadelCastilloNegrete2007}, the log-price option value
$V=V(x,t)$ satisfies the backward CGMY FPDE
\begin{align}
\label{eq:cgmy_fpde}
\frac{\partial V}{\partial t}
+ (r - \omega_S)\frac{\partial V}{\partial x}
&+ C\Gamma(-Y)\,e^{Mx}\,\RLright{Y}{x}\!\bigl(e^{-Mx}V\bigr) \notag\\
&+ C\Gamma(-Y)\,e^{-Gx}\,\RLleft{Y}{x}\!\bigl(e^{Gx}V\bigr)
= \lambda_0 V,
\end{align}
with terminal condition $V(x,T)=\psi(x)$.  We introduce backward time
$\tau=T-t$ and write
\begin{equation}
\label{eq:initial}
V(x,0)=\psi(x),\qquad
\psi(x)=\bigl(K-e^x\bigr)^+\quad\text{for a European put.}
\end{equation}

The two tempered Riemann--Liouville operators appearing in the generator are
\begin{equation}
\label{eq:tempered_ops_in_setup}
\mathcal D_G^-u=e^{-Gx}\,{}_{-\infty}D_x^Y(e^{Gx}u),
\qquad
\mathcal D_M^+u=e^{Mx}\,{}_{x}D_\infty^Y(e^{-Mx}u).
\end{equation}
With these definitions, the backward-time equation is
\begin{equation}
\label{eq:backward_time_strong}
\partial_\tau V+\mathcal A V=0,
\qquad V(x,0)=\psi(x),
\end{equation}
where the single operator used throughout the theory and the discretisation is
\begin{equation}
\label{eq:backward_operator_A}
{
\mathcal A u
:=-(r-\omega_S)u_x
-C\Gamma(-Y)\mathcal D_M^+u
-C\Gamma(-Y)\mathcal D_G^-u
+\lambda_0 u .}
\end{equation}
All Galerkin matrices in Section~\ref{sec:galerkin_scheme} are assembled from
this same operator.  This removes the common ambiguity between the
forward-time generator and the backward-time pricing operator.

\begin{remark}[Activity and variation of the CGMY process]
\label{rem:activity_variation}
Near the origin, the CGMY L\'evy density behaves like
$|y|^{-1-Y}\,dy$.  Hence
$\int_{|y|<1}\nu(dy)=\infty$ for every $Y>0$, so the process has infinite
activity throughout the range considered here.  The distinction is instead
variation: for $Y\in(0,1)$,
$\int_{|y|<1}|y|\nu(dy)<\infty$ and the process has finite variation, whereas
for $Y\in(1,2)$ it has infinite variation.  The B-spline connection
coefficient construction applies to both regimes, but the rigorous
variational stability and convergence analysis below is restricted to the
infinite-variation case $Y\in(1,2)$.
\end{remark}

The problem is solved numerically on a truncated interval
\begin{equation}
\label{eq:omega_interval}
\Omega=(x_{\min},x_{\max}),
\qquad x_{\min}\ll\log K\ll x_{\max},
\end{equation}
with exterior data described in Section~\ref{ssec:bc}.

\subsection{Full-Line Symbol, Variational Form, and G\texorpdfstring{\aa}{aa}rding Inequality}
\label{ssec:wellposedness}

The fractional part of \eqref{eq:backward_operator_A} is nonselfadjoint, and
therefore the variational form should not be replaced by same-sided fractional
``squares.''  We define the form directly from the backward-time operator.
On the full line, the Fourier symbols of the tempered operators are
\begin{equation}
\label{eq:tempered_symbols_setup}
\widehat{\mathcal D_G^-u}(\xi)=(G+i\xi)^Y\widehat u(\xi),
\qquad
\widehat{\mathcal D_M^+u}(\xi)=(M-i\xi)^Y\widehat u(\xi),
\end{equation}
with the principal branch of the complex power; see
Proposition~\ref{prop:tempered_symbols}.  Hence the symbol of $\mathcal A$ is
\begin{equation}
\label{eq:A_symbol}
A(\xi)
=-(r-\omega_S)i\xi
-C\Gamma(-Y)\Bigl[(M-i\xi)^Y+(G+i\xi)^Y\Bigr]
+\lambda_0 .
\end{equation}

For $s=Y/2$ define the full-line energy space by the Fourier norm
\begin{equation}
\label{eq:HMG_fourier_norm}
\|u\|_{H^s_{M,G}(\mathbb R)}^2
:=\frac{1}{2\pi}\int_{\mathbb R}
\Bigl(1+|M-i\xi|^{2s}+|G+i\xi|^{2s}\Bigr)|\widehat u(\xi)|^2\,d\xi .
\end{equation}
For fixed $M,G>0$, this norm is equivalent to the usual $H^s(\mathbb R)$
norm.  The full-line bilinear form is
\begin{equation}
\label{eq:bilinear}
a_{\mathbb R}(u,w):=\langle \mathcal A u,w\rangle
=\frac{1}{2\pi}\int_{\mathbb R}A(\xi)\widehat u(\xi)
\overline{\widehat w(\xi)}\,d\xi .
\end{equation}
Equivalently, for smooth functions,
\begin{align}
\label{eq:physical_bilinear_duality}
a_{\mathbb R}(u,w)
&=-(r-\omega_S)\langle u_x,w\rangle_{H^{-s},H^s}
-C\Gamma(-Y)\langle\mathcal D_M^+u,w\rangle_{H^{-s},H^s} \notag\\
&\quad
-C\Gamma(-Y)\langle\mathcal D_G^-u,w\rangle_{H^{-s},H^s}
+\lambda_0(u,w)_{L^2(\mathbb R)} .
\end{align}
The first-order drift is intentionally written as a duality pairing.  Since
$s>1/2$ in the theory below, Plancherel's theorem gives
\begin{equation}
\label{eq:drift_duality_bound}
|\langle u_x,w\rangle_{H^{-s},H^s}|
\le C_s\|u\|_{H^s(\mathbb R)}\|w\|_{H^s(\mathbb R)} .
\end{equation}

The coercivity statement below rests on a monotonicity property of the
tempered symbol that appears not to have been exploited in this context.

\begin{lemma}[Monotone decay of the tempered symbol's real part]
\label{lem:symbol_monotone}
Let $a>0$ and $Y\in(1,2)$.  The function
$\xi\mapsto\operatorname{Re}(a+i\xi)^Y$ is even and strictly decreasing in
$|\xi|$.  In particular,
\begin{equation}
\label{eq:symbol_monotone}
\operatorname{Re}(a\pm i\xi)^Y\le a^Y
\qquad\text{for all }\xi\in\mathbb R,
\end{equation}
with equality only at $\xi=0$.
\end{lemma}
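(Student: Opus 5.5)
We write the complex power in polar form. For $a>0$ and $\xi\in\mathbb R$, the number $a+i\xi$ lies in the open right half-plane. Its principal branch is therefore
\[
(a+i\xi)^Y=\rho^Y e^{iY\theta},\qquad \rho=\sqrt{a^2+\xi^2},\qquad \theta=\arctan(\xi/a)\in(-\tfrac{\pi}{2},\tfrac{\pi}{2}).
\]
This gives
\[
f(\xi):=\operatorname{Re}(a+i\xi)^Y=(a^2+\xi^2)^{Y/2}\cos\bigl(Y\arctan(\xi/a)\bigr).
\]

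\textbf{Evenness and reduction.} Since $\rho$ is even in $\xi$, $\theta$ is odd, and cosine is even, $f$ is even. Because $a-i\xi=\overline{a+i\xi}$ and the principal power commutes with conjugation off the negative real axis, we have $\operatorname{Re}(a-i\xi)^Y=f(\xi)$. So the ``$\pm$'' in \eqref{eq:symbol_monotone} adds nothing new, and it suffices to show $f$ is strictly decreasing on $[0,\infty)$. The bound \eqref{eq:symbol_monotone}, together with equality only at $\xi=0$, then follows from $f(0)=a^Y$.

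\textbf{Monotonicity.} We parametrize by the angle: for $\xi\ge0$, set $\theta\in[0,\pi/2)$ with $\rho=a/\cos\theta$. Then
\[
g(\theta):=f=a^Y\,\frac{\cos(Y\theta)}{\cos^Y\theta},
\]
and $\theta\mapsto\xi=a\tan\theta$ is strictly increasing, so it suffices that $g$ be strictly decreasing on $(0,\pi/2)$. Logarithmic differentiation, valid where $\cos(Y\theta)\ne0$, would give $g'/g=Y(\tan\theta-\tan Y\theta)$. It is cleaner to differentiate directly:
\[
g'(\theta)=a^Y Y\cos^{-Y-1}\theta\,\bigl[-\sin(Y\theta)\cos\theta+\cos(Y\theta)\sin\theta\bigr]
=-a^Y Y\cos^{-Y-1}\theta\,\sin\bigl((Y-1)\theta\bigr).
\]
For $Y\in(1,2)$ and $\theta\in(0,\pi/2)$, we have $(Y-1)\theta\in(0,\pi/2)$. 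Hence $\sin((Y-1)\theta)>0$ and $g'(\theta)<0$. So $g$, and therefore $f$, is strictly decreasing in $|\xi|$.

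The one point needing care is that $\cos(Y\theta)$ becomes negative once $\theta>\pi/(2Y)$. This rules out the log-derivative route, and the direct product form above avoids the issue. The formula for $g'$ also shows why the restriction $Y\in(1,2)$ matters: for $Y<1$ the sign of $g'$ flips and the real part increases.
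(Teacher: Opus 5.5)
Your proof is correct and takes essentially the same route as the paper, which proves the lemma by differentiating $\operatorname{Re}(a+i\xi)^Y$ on the principal branch. Your angle parametrization is the chain-rule form of the direct computation $\frac{d}{d\xi}\operatorname{Re}(a+i\xi)^Y=-Y\operatorname{Im}(a+i\xi)^{Y-1}=-Y\rho^{Y-1}\sin\bigl((Y-1)\theta\bigr)$, so the sign is settled by the same factor $\sin\bigl((Y-1)\theta\bigr)>0$.
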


The proof --- differentiation of $\operatorname{Re}(a+i\xi)^Y$ on the
principal branch --- is given in the supplementary materials
(section~SM1).

\begin{theorem}[Full-line G\texorpdfstring{\aa}{aa}rding inequality, coercivity, and shifted well-posedness]
\label{thm:wellposed}
Let $Y\in(1,2)$, $s=Y/2$, and assume
\eqref{eq:cgmy_parameter_assumption}.  Then there exist constants
$C_{\rm cont}>0$, $c_1>0$, and $c_2\ge0$, depending on the CGMY parameters and
$r$ but not on $u$ or $w$, such that
\begin{equation}
\label{eq:full_line_continuity}
|a_{\mathbb R}(u,w)|
\le C_{\rm cont}\|u\|_{H^s_{M,G}(\mathbb R)}
\|w\|_{H^s_{M,G}(\mathbb R)},
\end{equation}
and
\begin{equation}
\label{eq:garding}
\operatorname{Re}a_{\mathbb R}(u,u)
\ge c_1\|u\|_{H^s_{M,G}(\mathbb R)}^2
-c_2\|u\|_{L^2(\mathbb R)}^2 .
\end{equation}
Consequently, for
\begin{equation}
\label{eq:shifted_bilinear}
a_{\mathbb R,\sigma}(u,w)
:=a_{\mathbb R}(u,w)+\sigma(u,w)_{L^2(\mathbb R)},
\end{equation}
any $\sigma>c_2$ yields the shifted coercivity estimate
\begin{equation}
\label{eq:shifted_coercivity}
\operatorname{Re}a_{\mathbb R,\sigma}(u,u)
\ge C_{\rm coer}(\sigma)\|u\|_{H^s_{M,G}(\mathbb R)}^2,
\qquad
C_{\rm coer}(\sigma)>0 .
\end{equation}
Moreover, if $r>0$, then \eqref{eq:garding} holds with $c_2=0$; that is, the
full-line form is coercive,
\begin{equation}
\label{eq:full_line_coercivity}
\operatorname{Re}a_{\mathbb R}(u,u)
\ge c_1\|u\|_{H^s_{M,G}(\mathbb R)}^2 ,
\end{equation}
and $\sigma=0$ is admissible in \eqref{eq:shifted_coercivity} with
$C_{\rm coer}(0)=c_1$.
\end{theorem}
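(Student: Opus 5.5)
Everything reduces to pointwise bounds on the symbol $A(\xi)$ in \eqref{eq:A_symbol}, transferred to the forms through Plancherel and \eqref{eq:bilinear}. Write $\kappa:=-C\Gamma(-Y)$. For $Y\in(1,2)$ we have $\Gamma(-Y)>0$, so $\kappa<0$.

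\emph{Continuity.} The drift term is purely imaginary in $\xi$, and $|\xi|\le|G+i\xi|\le(1+|G+i\xi|^{2s})^{1/2}(1+|M-i\xi|^{2s})^{1/2}$ because $2s=Y>1$; this is the content of \eqref{eq:drift_duality_bound}. For the fractional terms, $|(M-i\xi)^Y|=|M-i\xi|^{2s}$, and this is dominated by the product of the weights of the two factors. The constant $\lambda_0$ is trivially bounded. Together these give $|A(\xi)|\le C_{\rm cont}\,w(\xi)$, where $w(\xi)=1+|M-i\xi|^{2s}+|G+i\xi|^{2s}$. Cauchy--Schwarz in \eqref{eq:bilinear} then yields \eqref{eq:full_line_continuity}.

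\emph{G\aa rding inequality.} Taking real parts removes the drift:
\[
\operatorname{Re}A(\xi)=\lambda_0+\kappa\bigl[\operatorname{Re}(M-i\xi)^Y+\operatorname{Re}(G+i\xi)^Y\bigr].
\]
Write $a+i\xi=\rho e^{i\theta}$ with $|\theta|<\pi/2$. Then $\operatorname{Re}(a+i\xi)^Y=\rho^Y\cos(Y\theta)$. As $|\xi|\to\infty$, $\theta\to\pm\pi/2$ and $\cos(Y\theta)\to\cos(Y\pi/2)<0$ for $Y\in(1,2)$. Since $\kappa<0$, the bracketed term therefore contributes a positive multiple of $|G+i\xi|^Y+|M-i\xi|^Y$ for $|\xi|\ge R$:
\[
\operatorname{Re}A(\xi)\ge \tfrac12|\kappa||\cos(Y\pi/2)|\bigl(|M-i\xi|^Y+|G+i\xi|^Y\bigr)+\lambda_0 .
\]
On the compact set $|\xi|\le R$ everything is bounded, so subtracting a constant gives $\operatorname{Re}A(\xi)\ge c_1w(\xi)-c_2$ for all $\xi$. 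Integrating against $|\widehat u|^2$ gives \eqref{eq:garding}. The shifted estimate \eqref{eq:shifted_coercivity} follows immediately, with $C_{\rm coer}(\sigma)=\min(c_1,\ldots)$ after absorbing $(\sigma-c_2)\|u\|^2\ge0$; indeed $\sigma>c_2$ gives the bound with constant $c_1$ directly.

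\emph{Coercivity for $r>0$.} This is the main step. By Lemma~\ref{lem:symbol_monotone} and $\kappa<0$,
\[
\kappa\bigl[\operatorname{Re}(M-i\xi)^Y+\operatorname{Re}(G+i\xi)^Y\bigr]\ge\kappa(M^Y+G^Y).
\]
Since $\lambda_0=r-\kappa(M^Y+G^Y)$, it follows that $\operatorname{Re}A(\xi)\ge r>0$ for every $\xi$. So $\operatorname{Re}A$ is bounded below by a positive constant everywhere and by $c\,w(\xi)$ for large $|\xi|$.

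To combine these, take a convex combination. Set $g(\xi):=\operatorname{Re}A(\xi)-r\ge0$. From the G\aa rding step, $g(\xi)\ge c_1w(\xi)-c_2-r$. For any $\theta\in(0,1)$,
\[
\operatorname{Re}A=r+g\ge r+\theta g\ge r(1-\theta)... ,
\]
or more concretely $r+\theta(c_1w-c_2-r)$. Choosing $\theta=\min\{1,\,r/(2(c_2+r))\}$ gives
\[
\operatorname{Re}A(\xi)\ge \tfrac{r}{2}+\theta c_1 w(\xi)\ge \theta c_1 w(\xi).
\]
This proves \eqref{eq:full_line_coercivity} after renaming $c_1$, so $\sigma=0$ is admissible. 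The points that need care are the sign of $\Gamma(-Y)$ and the correct use of the principal branch in the asymptotic $\cos(Y\theta)$ bound; the rest is routine.
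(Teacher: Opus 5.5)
Your proposal is correct and follows essentially the same route as the paper's proof: a pointwise bound $|A(\xi)|\le C\,w(\xi)$ for continuity, high-frequency growth from $\Gamma(-Y)>0$ and $\cos(Y\pi/2)<0$ plus a compact-set bound for the G{\aa}rding inequality, and, for coercivity, the exact lower bound $\operatorname{Re}A(\xi)\ge r$ from Lemma~\ref{lem:symbol_monotone} combined with that growth (your convex combination with $\theta=r/(2(c_2+r))$ is an explicit version of the paper's ``combining'' step). The only slip is the intermediate inequality $|G+i\xi|\le(1+|G+i\xi|^{2s})^{1/2}(1+|M-i\xi|^{2s})^{1/2}$, which fails in general (e.g.\ $\xi=0$, $G=10$, $M=Y=1.01$); it is not needed, since $|\xi|\le 1+|\xi|^{Y}\le w(\xi)$ follows at once from $Y>1$ and $|G+i\xi|\ge|\xi|$.
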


\begin{proof}[Proof sketch]
Continuity follows from the multiplier representation \eqref{eq:A_symbol},
the norm equivalence of \eqref{eq:HMG_fourier_norm}, and the drift bound
\eqref{eq:drift_duality_bound}.  For the G{\aa}rding inequality, since
$\Gamma(-Y)>0$ and $\cos(Y\pi/2)<0$ for $Y\in(1,2)$, the real part of the
symbol grows like $|\xi|^Y$ at high frequency and is bounded below on compact
sets, so $\operatorname{Re}A(\xi)\ge c(1+|\xi|^2)^s-C_0$ and Plancherel's
theorem gives \eqref{eq:garding}.  The coercive refinement rests on an exact
identity: substituting the definition \eqref{eq:lambda0_def} of $\lambda_0$
into \eqref{eq:A_symbol}, the real part of the symbol is \emph{exactly}
\begin{equation}
\label{eq:symbol_real_part_exact}
\operatorname{Re}A(\xi)
=r
+C\Gamma(-Y)\Bigl[
\bigl(M^Y-\operatorname{Re}(M-i\xi)^Y\bigr)
+\bigl(G^Y-\operatorname{Re}(G+i\xi)^Y\bigr)
\Bigr],
\end{equation}
and by Lemma~\ref{lem:symbol_monotone} both bracketed terms are nonnegative,
so $\operatorname{Re}A(\xi)\ge r>0$ for every $\xi$ when $r>0$.  Combining
this uniform bound at low frequency with the $|\xi|^Y$ growth at high
frequency yields $\operatorname{Re}A(\xi)\ge c_1(1+|\xi|^2)^s$ with no
subtracted constant, hence \eqref{eq:full_line_coercivity}.  The complete
proof, including the explicit constant $c_1$, is given in the supplementary
materials (section~SM1).
\end{proof}

\begin{remark}[The case $r=0$]
\label{rem:r_zero}
If $r=0$, then $\operatorname{Re}A(0)=0$ by
\eqref{eq:symbol_real_part_exact}, so coercivity fails only through the
constant Fourier mode; the G{\aa}rding inequality \eqref{eq:garding} still
holds with an arbitrarily small $c_2>0$.  Since $r>0$ in every experiment and
throughout the calibration bounds of Section~\ref{sec:calibration}, the
coercive case $c_2=0$ is the operative one in this paper, and we call a shift
$\sigma$ \emph{admissible} if $\sigma>c_2$, or $\sigma=0$ when $c_2=0$.  All
shifted statements below hold for any admissible shift.
\end{remark}

\begin{remark}[Algorithmic range versus proved variational range]
\label{rem:algorithmic_vs_theory_range}
The connection coefficient formulae in Section~\ref{sec:bspline_wavelets}
are valid for $Y\in(0,1)\cup(1,2)$.  The proof above, however, is a
coercive shifted variational argument for $Y\in(1,2)$.  When $Y\in(0,1)$,
$s=Y/2<1/2$, and the first-order drift is not bounded on
$H^s\times H^s$ by \eqref{eq:drift_duality_bound}.  A complete theory in the
finite-variation regime should separate the drift from the lower-order jump
part or use a generator/semigroup framework; this is left for future work.
\end{remark}

\subsection{Finite-Domain Truncation and Smooth Exterior Lifting}
\label{ssec:finite_domain_framework}

For a nonlocal CGMY operator, passing from $\mathbb R$ to
$\Omega=(x_{\min},x_{\max})$ is part of the model, not merely a boundary
condition.  The operator acting on points inside $\Omega$ depends on the
chosen exterior extension.  In the theoretical formulation we therefore write
\begin{equation}
\label{eq:continuous_lifting_split}
V(x,\tau)=g(x,\tau)+U(x,\tau),
\end{equation}
where $g$ is a prescribed smooth lifting carrying the far-field behaviour and
$U$ is the homogeneous unknown.  For the put option, $g$ is chosen so that
$g(x,\tau)=Ke^{-r\tau}$ near the left exterior and $g(x,\tau)=0$ near the
right exterior, with a smooth transition across the computational buffer.
We assume
\begin{equation}
\label{eq:lifting_regular_assumption}
g\in L^\infty(0,T;H^q(\mathbb R)),
\qquad
\partial_\tau g\in L^2(0,T;H^{q-Y}(\mathbb R)),
\qquad q\ge p,
\end{equation}
so that the lifting forcing
\begin{equation}
\label{eq:fg_def}
f_g:=-\partial_\tau g-\mathcal A g
\end{equation}
is well-defined in the dual space used below.

Because the homogeneous unknown carries no far-field data --- these are
carried entirely by the lifting $g$ and enter the scheme only through the
forcing --- the natural homogeneous exterior extension is \emph{extension by
zero}.  We fix this convention once and for all: $E_\Omega u=\widetilde u$,
where $\widetilde u$ denotes the extension of $u$ by zero outside $\Omega$.
The corresponding energy space is
\begin{equation}
\label{eq:zero_extension_space}
H^s_{M,G}(\Omega)
:=\bigl\{u\in L^2(\Omega):\widetilde u\in H^s(\mathbb R)\bigr\},
\qquad
\|u\|_{H^s_{M,G}(\Omega)}
:=\|\widetilde u\|_{H^s_{M,G}(\mathbb R)} .
\end{equation}
For $s=Y/2\in(1/2,1)$ this space coincides with
$H^s_0(\Omega)=\{v\in H^s(\Omega):v(x_{\min})=v(x_{\max})=0\}$, with
equivalent norms; see \cite[Theorem~3.33]{McLean2000}.  In particular,
membership requires vanishing endpoint values but no condition on
derivatives, which is the correct homogeneous class for the fractional order
$s<1$.  Throughout the analysis, $H^s_{M,G}(\Omega)$ denotes the space
\eqref{eq:zero_extension_space}.

The finite-domain bilinear form for the homogeneous unknown is
\begin{equation}
\label{eq:finite_domain_bilinear}
a_\Omega(u,w):=\langle \mathcal A\widetilde u,\widetilde w\rangle
=a_{\mathbb R}(\widetilde u,\widetilde w) .
\end{equation}
In the remainder of the paper, whenever no ambiguity is possible, we write
$a(\cdot,\cdot)$ for $a_\Omega(\cdot,\cdot)$ and
$a_\sigma(u,w)=a(u,w)+\sigma(u,w)_{L^2(\Omega)}$.

With the extension operator fixed, the finite-domain G{\aa}rding inequality
is no longer an assumption: it is inherited from the full line.

\begin{theorem}[Finite-domain G\texorpdfstring{\aa}{aa}rding inequality and coercivity]
\label{ass:finite_domain_garding}
Let $Y\in(1,2)$, $s=Y/2$, and assume
\eqref{eq:cgmy_parameter_assumption}.  With the zero-extension convention
\eqref{eq:zero_extension_space}--\eqref{eq:finite_domain_bilinear}, the
finite-domain form is continuous on
$H^s_{M,G}(\Omega)\times H^s_{M,G}(\Omega)$,
\begin{equation}
\label{eq:finite_domain_continuity}
|a_\Omega(u,w)|
\le C_{\rm cont}\|u\|_{H^s_{M,G}(\Omega)}\|w\|_{H^s_{M,G}(\Omega)},
\end{equation}
and satisfies the G{\aa}rding inequality
\begin{equation}
\label{eq:finite_domain_garding}
\operatorname{Re}a_\Omega(u,u)
\ge c_1\|u\|_{H^s_{M,G}(\Omega)}^2
-c_2\|u\|_{L^2(\Omega)}^2 ,
\end{equation}
with the \emph{same} constants $C_{\rm cont}$, $c_1$, $c_2$ as in
Theorem~\ref{thm:wellposed}; in particular the constants are independent of
$h$ and of $\Omega$, and $c_2=0$ whenever $r>0$.  Consequently, for every
admissible shift $\sigma$ (Remark~\ref{rem:r_zero}),
\begin{equation}
\label{eq:finite_domain_shifted_coercivity}
\operatorname{Re}a_{\Omega,\sigma}(u,u)
\ge C_{\rm coer}(\sigma)\|u\|_{H^s_{M,G}(\Omega)}^2 ,
\end{equation}
with $C_{\rm coer}(0)=c_1$ when $c_2=0$.
\end{theorem}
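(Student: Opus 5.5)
The plan is a direct reduction to Theorem~\ref{thm:wellposed}, made possible by the zero-extension convention. For $u,w\in H^s_{M,G}(\Omega)$ the extensions $\widetilde u,\widetilde w$ lie in $H^s(\mathbb R)$, and hence in $H^s_{M,G}(\mathbb R)$ by the stated norm equivalence. By definition \eqref{eq:zero_extension_space},
\[
\|u\|_{H^s_{M,G}(\Omega)}=\|\widetilde u\|_{H^s_{M,G}(\mathbb R)} .
\]
Moreover, since $\widetilde u$ vanishes off $\Omega$, we have $\|\widetilde u\|_{L^2(\mathbb R)}=\|u\|_{L^2(\Omega)}$ and $(\widetilde u,\widetilde w)_{L^2(\mathbb R)}=(u,w)_{L^2(\Omega)}$. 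The finite-domain form is literally $a_\Omega(u,w)=a_{\mathbb R}(\widetilde u,\widetilde w)$ by \eqref{eq:finite_domain_bilinear}. Every finite-domain quantity in the statement is therefore a full-line quantity evaluated at zero-extended functions, and I will simply substitute.

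First, continuity: applying \eqref{eq:full_line_continuity} to $(\widetilde u,\widetilde w)$ gives
\[
|a_\Omega(u,w)|\le C_{\rm cont}\|\widetilde u\|_{H^s_{M,G}(\mathbb R)}\|\widetilde w\|_{H^s_{M,G}(\mathbb R)},
\]
which is \eqref{eq:finite_domain_continuity} after the norm identity. Second, the G{\aa}rding inequality: applying \eqref{eq:garding} to $\widetilde u$ and rewriting both norms yields \eqref{eq:finite_domain_garding}. The constants are the full-line ones, depending only on the CGMY parameters and $r$, so they are independent of $\Omega$ and trivially of $h$. When $r>0$, Theorem~\ref{thm:wellposed} gives $c_2=0$, and the same substitution into \eqref{eq:full_line_coercivity} gives coercivity on $\Omega$. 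Third, the shifted estimate: because $\sigma\|u\|_{L^2(\Omega)}^2=\sigma\|\widetilde u\|_{L^2(\mathbb R)}^2$, we have $a_{\Omega,\sigma}(u,u)=a_{\mathbb R,\sigma}(\widetilde u,\widetilde u)$. Thus \eqref{eq:shifted_coercivity} transfers directly for $\sigma>c_2$. Alternatively, one can compute directly:
\[
\operatorname{Re}a_{\Omega,\sigma}(u,u)\ge c_1\|u\|^2_{H^s_{M,G}(\Omega)}+(\sigma-c_2)\|u\|^2_{L^2(\Omega)}\ge c_1\|u\|^2_{H^s_{M,G}(\Omega)}
\]
for $\sigma\ge c_2$. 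This includes $\sigma=0$ when $c_2=0$ with $C_{\rm coer}(0)=c_1$, and covers every admissible shift in the sense of Remark~\ref{rem:r_zero}.

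The only point requiring care, which I expect to be the main (mild) obstacle, is justifying that the full-line form and its Plancherel representation \eqref{eq:bilinear} are legitimately evaluated at $\widetilde u$. The operator $\mathcal A$ applied to $\widetilde u$ is understood in the distributional/duality sense $H^s\to H^{-s}$ rather than pointwise, since $\widetilde u$ typically has kinks at $x_{\min},x_{\max}$. I would argue as follows. The full-line estimates of Theorem~\ref{thm:wellposed} hold on a dense class such as $C_c^\infty(\mathbb R)$, and the multiplier bound extends $a_{\mathbb R}$ uniquely and continuously to $H^s_{M,G}(\mathbb R)\times H^s_{M,G}(\mathbb R)$. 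The definition of $a_\Omega$ is precisely this extension restricted to the closed subspace $\{\widetilde u:u\in H^s_{M,G}(\Omega)\}$. No extra regularity beyond $\widetilde u\in H^s(\mathbb R)$ is used. The identification of this subspace with $H^s_0(\Omega)$ for $s\in(1/2,1)$, via \cite[Theorem~3.33]{McLean2000}, is only needed to interpret the space, not for the inequalities themselves.
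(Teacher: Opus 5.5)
Your proposal is correct and is essentially the paper's proof: apply Theorem~\ref{thm:wellposed} to the zero extensions $\widetilde u,\widetilde w$, use $\|\widetilde u\|_{L^2(\mathbb R)}=\|u\|_{L^2(\Omega)}$ together with the defining norm identity \eqref{eq:zero_extension_space}, and add $\sigma\|u\|_{L^2(\Omega)}^2$ to obtain the shifted estimate. Your density remark is harmless but not needed: for $Y>1$ one has $|A(\xi)|\lesssim(1+|\xi|^2)^{s}$, so the Plancherel formula \eqref{eq:bilinear} converges absolutely on $H^s(\mathbb R)\times H^s(\mathbb R)$ and can be evaluated at $\widetilde u,\widetilde w$ directly.
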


\begin{proof}
For $u,w\in H^s_{M,G}(\Omega)$ one has $\widetilde u,\widetilde w\in
H^s(\mathbb R)$ with $\|\widetilde u\|_{L^2(\mathbb R)}=\|u\|_{L^2(\Omega)}$,
so every estimate follows by applying the full-line inequalities of
Theorem~\ref{thm:wellposed} to $\widetilde u$ and $\widetilde w$ and using
the definition \eqref{eq:zero_extension_space} of the norm; the shifted
estimate follows by adding $\sigma\|u\|_{L^2(\Omega)}^2$.
\end{proof}

\begin{remark}[Scope of the zero-extension convention]
\label{rem:zero_extension_scope}
Theorem~\ref{ass:finite_domain_garding} replaces what was previously an
unverified finite-domain assumption.  The only structural requirement is
that the homogeneous unknown --- and the discrete trial and test functions of
Sections~\ref{ssec:spatial_disc}--\ref{ssec:bc} --- extend by zero, so that
all far-field data enter exclusively through the lifting $g$ and the forcing
$\ell_{\rm bc}$; this is exactly the formulation used by the scheme.  Other
exterior extension conventions, such as the financially natural piecewise
extension \eqref{eq:exterior_extension} applied to the full value, would
require a separate coercivity proof.
\end{remark}

Finally, the interior spline spaces used in the analysis approximate at full
order only functions that vanish near the artificial boundaries.  This is
arranged through the lifting.

\begin{assumption}[Buffer-supported homogeneous unknown]
\label{ass:buffer_lifting}
There is a fixed buffer width $\delta_b>0$, independent of $h$, such that the
lifting $g$ in \eqref{eq:continuous_lifting_split} is chosen with
$U(\cdot,\tau)=V(\cdot,\tau)-g(\cdot,\tau)$ and $\partial_\tau U(\cdot,\tau)$
vanishing on the buffer strips
$\Omega_b:=(x_{\min},x_{\min}+\delta_b)\cup(x_{\max}-\delta_b,x_{\max})$
for all $\tau\in[0,T]$.
\end{assumption}

\begin{remark}[Constructing a buffer-supported lifting]
\label{rem:buffer_lifting_construction}
Assumption~\ref{ass:buffer_lifting} can always be met by a cutoff
construction: because the solution approaches its far-field values at the
exponentially tempered rate of Proposition~\ref{prop:domain_error}, the
required modification changes $g$ only by exponentially small, smooth terms
on the buffer strips and preserves the regularity
\eqref{eq:lifting_regular_assumption}.  The construction is detailed in the
supplementary materials (section~SM2).
\end{remark}

\subsection{American Put Obstacle Problem}
\label{ssec:american_obstacle_model}

The same backward-time CGMY operator also leads to an American put problem by
adding the early-exercise constraint.  Let $\psi_A(x)=(K-e^x)^+$.
For an American put, the value must remain above the payoff and, in the
continuation region, it satisfies the same linear equation as the European
put.  With the sign convention of \eqref{eq:backward_time_strong}, the
continuous complementarity formulation is
\begin{equation}
\label{eq:american_continuous_lcp}
V(x,\tau)\ge \psi_A(x),\qquad
\partial_\tau V(x,\tau)+\mathcal A V(x,\tau)\ge 0,
\end{equation}
with
\begin{equation}
\label{eq:american_complementarity}
\bigl(V(x,\tau)-\psi_A(x)\bigr)
\bigl(\partial_\tau V(x,\tau)+\mathcal A V(x,\tau)\bigr)=0 .
\end{equation}
Equivalently, if
\begin{equation}
\label{eq:american_admissible_set}
\mathcal K:=\{W\in H^s_{M,G}(\Omega): W\ge \psi_A\text{ a.e. on }\Omega\},
\end{equation}
then the variational inequality is
\begin{equation}
\label{eq:american_vi}
\bigl\langle \partial_\tau V+\mathcal A V, W-V\bigr\rangle \ge 0,
\qquad W\in\mathcal K .
\end{equation}
The formulation \eqref{eq:american_admissible_set}--\eqref{eq:american_vi}
is stated formally for the full value; on the truncated domain, the far-field
data are handled by the same lifting device as in the European case, with
$H^s_{M,G}(\Omega)$ understood for the lifted unknown as in
Section~\ref{ssec:finite_domain_framework}.  No well-posedness or
convergence theory is claimed for the obstacle problem here; the American
solver is presented as a numerical extension
(Section~\ref{ssec:american_lcp}).
The initial condition in backward time is again $V(x,0)=\psi_A(x)$.
The finite-domain exterior data differ from the European put only at the
left boundary: deep in the money, immediate exercise is optimal, so the
American far-field behaviour is
\begin{equation}
\label{eq:american_farfield}
V(x,\tau)\sim K-e^x\quad (x\to -\infty),
\qquad
V(x,\tau)\sim 0\quad (x\to +\infty).
\end{equation}
Thus the left exterior value is not discounted by $e^{-r\tau}$, unlike the
European put boundary in \eqref{eq:put_farfield}.  The numerical
implementation in Section~\ref{ssec:american_lcp} uses this obstacle
formulation with the same B-spline connection-coefficient matrix assembled
for the European problem.


\section{B-Spline Scaling Basis and Tempered Connection Coefficients}
\label{sec:bspline_wavelets}

\subsection{B-Spline Scaling Functions}
\label{ssec:bspline_basics}

This section derives the connection coefficients used by the spatial
Galerkin discretisation.  The discretisation below uses the cardinal
B-spline \emph{scaling space} at a fixed resolution level.  This distinction
is important: the compactly supported scaling functions provide the
approximation space, while genuine spline wavelets may later be used for
multilevel compression.

\begin{definition}[Cardinal B-spline]
The cardinal B-spline of order $p\geq 1$ is defined recursively by
\begin{equation}
\label{eq:bspline_rec}
N_p(x) = \frac{x}{p-1}\,N_{p-1}(x)
       + \frac{p-x}{p-1}\,N_{p-1}(x-1),
\qquad N_1(x)=\mathbf{1}_{[0,1)}(x).
\end{equation}
Equivalently,
\begin{equation}
\label{eq:bspline_truncated_power}
N_p(x)=\frac{1}{\Gamma(p)}
\sum_{m=0}^{p}(-1)^m\binom{p}{m}(x-m)_+^{p-1},
\qquad (x)_+:=\max\{x,0\}.
\end{equation}
The function $N_p$ is supported on $[0,p]$ and belongs to
$C^{p-2}(\mathbb{R})$ for $p\geq 2$.
\end{definition}

At resolution level $j\geq 0$, set
\begin{equation}
\label{eq:scaling_function}
\bsphi_{j,k}(x)=2^{j/2}N_p(2^j x-k),
\qquad k\in\mathbb{Z},
\end{equation}
and define
\begin{equation}
V_j=\overline{\mathrm{span}}\{\bsphi_{j,k}:k\in\mathbb{Z}\}.
\end{equation}
The Fourier transform of $N_p$ is
\begin{equation}
\label{eq:bspline_fourier}
\widehat N_p(\xi)
=\left(\frac{1-e^{-i\xi}}{i\xi}\right)^p,
\end{equation}
and therefore
\begin{equation}
\label{eq:phi_fourier}
\widehat{\bsphi}_{j,k}(\xi)
=2^{-j/2}e^{-ik2^{-j}\xi}\widehat N_p(2^{-j}\xi).
\end{equation}
The approximation property used later is
\begin{equation}
\label{eq:bspline_approximation_order}
\inf_{v\in V_j}\|u-v\|_{H^s(\mathbb{R})}
\leq C2^{-j(p-s)}\|u\|_{H^p(\mathbb{R})},
\qquad 0\leq s\leq p.
\end{equation}

\begin{remark}[Scaling functions versus wavelets]
The scaling function $N_p$ does \emph{not} have vanishing moments; indeed
$\int_{\mathbb{R}}N_p(x)\,dx=1$.  Vanishing moments are a property of the
associated spline wavelets, such as Battle--Lemari\'e or compactly supported
biorthogonal spline wavelets.  The present paper uses the scaling space
$V_j$ for the Galerkin approximation.  Consequently, the exact Toeplitz
structure derived below follows from translation invariance, not from
wavelet cancellation.  A genuinely multilevel wavelet compression would
require a separate basis transformation and a separate compressibility proof.
\end{remark}

\subsection{Tempered Riemann--Liouville Operators}
\label{ssec:tempered_symbols}

The CGMY FPDE contains exponentially conjugated Riemann--Liouville
operators.  We define the left- and right-tempered operators by
\begin{equation}
\label{eq:tempered_left_operator}
\mathcal{D}_{G}^{-}u
:=e^{-Gx}\,{}_{-\infty}D_x^Y\bigl(e^{Gx}u\bigr),
\end{equation}
and
\begin{equation}
\label{eq:tempered_right_operator}
\mathcal{D}_{M}^{+}u
:=e^{Mx}\,{}_{x}D_{\infty}^Y\bigl(e^{-Mx}u\bigr).
\end{equation}
The outside exponential factors are part of the operator.  Omitting them
would destroy the translation-invariant tempered symbol and would lead to an
incorrect connection matrix.

\begin{proposition}[Fourier symbols of the tempered operators]
\label{prop:tempered_symbols}
Let $u$ be compactly supported, or more generally let $u$ have a Fourier
transform that admits the required analytic shifts in the strips
$|\operatorname{Im}\xi|\leq \max\{G,M\}$. Let the fractional power be taken on the
principal branch.  Then
\begin{equation}
\label{eq:left_tempered_symbol}
\widehat{\mathcal{D}_{G}^{-}u}(\xi)
=(G+i\xi)^Y\widehat u(\xi),
\end{equation}
and
\begin{equation}
\label{eq:right_tempered_symbol}
\widehat{\mathcal{D}_{M}^{+}u}(\xi)
=(M-i\xi)^Y\widehat u(\xi).
\end{equation}
\end{proposition}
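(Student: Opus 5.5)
The plan is to reduce both identities to the untempered Riemann--Liouville symbols and then transfer them by an exponential shift, i.e.\ by translating the Fourier variable into the complex plane. I use the convention $\widehat u(\xi)=\int_{\mathbb R}e^{-ix\xi}u(x)\,dx$, which is consistent with \eqref{eq:bspline_fourier}. I also use the standard definition ${}_{-\infty}D_x^Y v=\frac{d^n}{dx^n}\,{}_{-\infty}I_x^{\,n-Y}v$ with $n=\lceil Y\rceil$, where ${}_{-\infty}I_x^{\beta}v=v*k_\beta$ and $k_\beta(x)=x_+^{\beta-1}/\Gamma(\beta)$; the right-sided operator is defined by the mirror kernel. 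The fact to establish first is the classical Laplace-type integral
\[
\int_0^\infty e^{-zx}\frac{x^{\beta-1}}{\Gamma(\beta)}\,dx=z^{-\beta},\qquad \operatorname{Re}z>0,
\]
taken on the principal branch. It holds first for real $z>0$ and then for all $\operatorname{Re}z>0$ by analytic continuation, since both sides are holomorphic in the right half-plane.

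\textbf{Step 1 (left operator).} Set $v=e^{Gx}u$. A direct substitution under the integral gives
\[
e^{-Gx}\,{}_{-\infty}I_x^{\beta}(e^{Gx}u)=u*\bigl(e^{-Gx}k_\beta\bigr).
\]
The kernel $e^{-Gx}k_\beta$ lies in $L^1$, and its Fourier transform is $(G+i\xi)^{-\beta}$ by the integral above with $z=G+i\xi$. For the $n$ derivatives, I use the conjugation identity $e^{-Gx}\frac{d}{dx}(e^{Gx}w)=(G+\frac{d}{dx})w$, whose symbol is $G+i\xi$. Combining the two pieces, the operator $\mathcal D_G^-$ becomes $(G+\partial_x)^n$ applied to the convolution with $e^{-Gx}k_{n-Y}$. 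Its symbol is therefore
\[
(G+i\xi)^n(G+i\xi)^{-(n-Y)}=(G+i\xi)^Y.
\]
This last equality needs care with the principal branch. Since $\operatorname{Re}(G+i\xi)>0$, the argument $\theta=\arg(G+i\xi)$ lies in $(-\pi/2,\pi/2)$. Then $n\theta$ and $(Y-n)\theta$ combine additively without crossing the branch cut, so $w^n w^{Y-n}=w^Y$ holds exactly for every such $w$.

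\textbf{Step 2 (right operator).} The reflection $Ru(x)=u(-x)$ maps ${}_xD_\infty^Y$ to ${}_{-\infty}D_x^Y$ and maps $\xi$ to $-\xi$ on the Fourier side. It also carries $e^{Mx}(\cdot)e^{-Mx}$ to the left-type conjugation with parameter $M$. Applying Step 1 to $Ru$ with $G$ replaced by $M$, and then reflecting back, gives the symbol $(M-i\xi)^Y$. The only thing to check is that the $(-1)^n$ in the definition of the right-sided derivative cancels against the reflected derivatives.

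\textbf{Main obstacle (justification).} The formal steps above hide two convergence issues. First, for compactly supported $u$, the function $e^{Gx}u$ is compactly supported, but ${}_{-\infty}I^{n-Y}(e^{Gx}u)$ grows like $x^{n-Y-1}$ at $+\infty$ and is not in $L^1$. For this reason I will not Fourier-transform the untempered quantity directly. I work instead with the tempered kernel $e^{-Gx}k_\beta\in L^1$, where Young's inequality and the convolution theorem apply cleanly. The derivatives are then handled distributionally, or classically when $u\in C_c^\infty$, after which the result extends by density in $H^Y$ to the general statement.

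Second, for the more general class in the hypothesis, I would phrase the argument through the analytic continuation $\widehat{e^{Gx}u}(\xi)=\widehat u(\xi+iG)$, valid in the strip $|\operatorname{Im}\xi|\le\max\{G,M\}$. The required step is to shift the contour back to the real line, and this is where the strip hypothesis is used. I expect this step, verifying that the shifted contour integrals converge and that no branch issue arises, to be the main technical point. The branch issue is harmless because $G+i\zeta$ stays in the right half-plane throughout the strip $\operatorname{Im}\zeta>-G$.
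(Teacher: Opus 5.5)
Your proof is correct, but its main line differs from the paper's. The paper (supplement, section SM3) uses a Fourier shift. It combines $\widehat{e^{Gx}u}(\xi)=\widehat u(\xi+iG)$ and $\widehat{e^{-Gx}F}(\xi)=\widehat F(\xi-iG)$ with the untempered symbol $(i\zeta)^Y$ evaluated at the complex frequency $\zeta=\xi-iG$, where $i\zeta=G+i\xi$. The strip hypothesis in the statement is exactly what licenses these off-axis evaluations. You never leave the real axis. You factor $\mathcal D_G^-$ as $(G+\partial_x)^n$ composed with convolution against the $L^1$ kernel $e^{-Gx}k_{n-Y}$. The only complex-analytic input is then the scalar Laplace integral at $z=G+i\xi$, which is the paper's shifted untempered symbol in disguise, computed once for the kernel rather than for $\widehat u$.

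What your route buys is that no analyticity of $\widehat u$ is needed. The convolution theorem applies directly, and density gives the identity on all of $H^Y(\mathbb R)$ without any exponential decay of $u$. This matches how the symbol is used later, since the energy norm and the full-line form are defined on $H^s$ through the Fourier side. Your closing contour-shift paragraph is therefore superfluous. The paper's route is shorter once the untempered symbols are taken as known, but it needs the symbol identity at $\xi-iG$, i.e.\ an analytic-continuation step, which is where its hypothesis enters. Your reflection argument for $\mathcal D_M^+$, including the cancellation of $(-1)^n$, is a clean substitute for the paper's second shift with $(-i\zeta)^Y$ at $\zeta=\xi+iM$.

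A few small slips, none affecting the main argument:
\begin{itemize}
\item ${}_{-\infty}I_x^{n-Y}(e^{Gx}u)$ does not grow; it decays like $x^{n-Y-1}$ with exponent in $(-1,0)$. That is still non-integrable, so your point stands.
\item $\operatorname{Re}(G+i\zeta)=G-\operatorname{Im}\zeta$, so the half-plane condition is $\operatorname{Im}\zeta<G$, not $\operatorname{Im}\zeta>-G$.
\item The branch check in Step 1 needs no half-plane condition at all. For integer $n$ and any $w\neq0$, $w^n w^{Y-n}=e^{n\operatorname{Log}w}e^{(Y-n)\operatorname{Log}w}=w^Y$. The condition $\operatorname{Re}z>0$ is needed only for the Laplace integral.
\end{itemize}
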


The proof --- a Fourier shift argument for the exponential conjugations
combined with the Riemann--Liouville symbols
$\widehat{{}_{-\infty}D_x^Y f}(\xi)=(i\xi)^Y\widehat f(\xi)$ and
$\widehat{{}_{x}D_{\infty}^Y f}(\xi)=(-i\xi)^Y\widehat f(\xi)$ --- is given in
the supplementary materials (section~SM3).

\subsection{Toeplitz Connection Coefficients}
\label{ssec:connection_coefficients}

The connection matrices are now defined using the complete tempered
operators \eqref{eq:tempered_left_operator}--\eqref{eq:tempered_right_operator}.
At resolution level $j$, set
\begin{align}
\label{eq:Aminus_correct}
(\Aminus)_{kk'}
&:=\int_{\mathbb{R}}\bsphi_{j,k}(x)
\,\mathcal{D}_{G}^{-}\bsphi_{j,k'}(x)\,dx,\\
\label{eq:Aplus_correct}
(\Aplus)_{kk'}
&:=\int_{\mathbb{R}}\bsphi_{j,k}(x)
\,\mathcal{D}_{M}^{+}\bsphi_{j,k'}(x)\,dx.
\end{align}
The financial prefactor $C\Gamma(-Y)$ is applied later when the full CGMY
stiffness matrix is assembled.

\begin{theorem}[Tempered B-spline connection coefficients]
\label{thm:tempered_connection_coefficients}
On the full line, the entries of $\Aminus$ and $\Aplus$ depend only on the
index difference $\ell=k-k'$.  More precisely,
\begin{equation}
\label{eq:Aminus_toeplitz_correct}
(\Aminus)_{kk'}=2^{jY}\Lambda_G^-(k-k';G2^{-j}),
\end{equation}
where
\begin{equation}
\label{eq:LambdaGminus}
\Lambda_G^-(\ell;\delta)
=\frac{1}{2\pi}\int_{\mathbb{R}}
(\delta+i\eta)^Y|\widehat N_p(\eta)|^2e^{i\ell\eta}\,d\eta,
\qquad \delta=G2^{-j},
\end{equation}
and
\begin{equation}
\label{eq:Aplus_toeplitz_correct}
(\Aplus)_{kk'}=2^{jY}\Lambda_M^+(k-k';M2^{-j}),
\end{equation}
where
\begin{equation}
\label{eq:LambdaMplus}
\Lambda_M^+(\ell;\delta)
=\frac{1}{2\pi}\int_{\mathbb{R}}
(\delta-i\eta)^Y|\widehat N_p(\eta)|^2e^{i\ell\eta}\,d\eta,
\qquad \delta=M2^{-j}.
\end{equation}
Thus the infinite-domain connection matrices are Toeplitz.  The factor
$2^{jY}$ is the correct scaling of a fractional derivative of order $Y$.
\end{theorem}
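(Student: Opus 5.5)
The plan is to compute each entry in the Fourier domain with Parseval's identity, combining the symbol formula of Proposition~\ref{prop:tempered_symbols} with the explicit transform \eqref{eq:phi_fourier}, and then rescale the frequency variable. Since $N_p$ is compactly supported, $\bsphi_{j,k'}$ satisfies the hypotheses of Proposition~\ref{prop:tempered_symbols}. Hence $\widehat{\mathcal D_G^-\bsphi_{j,k'}}(\xi)=(G+i\xi)^Y\widehat{\bsphi}_{j,k'}(\xi)$. The basis functions are real, so Parseval gives
\[
(\Aminus)_{kk'}=\frac{1}{2\pi}\int_{\mathbb R}(G+i\xi)^Y\,\widehat{\bsphi}_{j,k'}(\xi)\,\overline{\widehat{\bsphi}_{j,k}(\xi)}\,d\xi .
\]
Inserting \eqref{eq:phi_fourier}, the product of the two transforms equals $2^{-j}e^{i(k-k')2^{-j}\xi}\,|\widehat N_p(2^{-j}\xi)|^2$. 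The modulus appears because the same $\widehat N_p$ occurs in both factors, one of them conjugated.

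Next substitute $\xi=2^j\eta$, so that $d\xi=2^j\,d\eta$. This cancels the factor $2^{-j}$, turns the phase into $e^{i\ell\eta}$ with $\ell=k-k'$, and converts the symbol into
\[
(G+i2^j\eta)^Y=\bigl(2^j(G2^{-j}+i\eta)\bigr)^Y=2^{jY}(\delta+i\eta)^Y,\qquad \delta=G2^{-j}.
\]
The last factorization relies on the positive real scalar $2^j$ commuting with the principal branch. That holds because multiplying by a positive number leaves the argument unchanged, and $\operatorname{Re}(\delta+i\eta)>0$ keeps the argument in $(-\pi/2,\pi/2)$. This yields \eqref{eq:Aminus_toeplitz_correct}--\eqref{eq:LambdaGminus}. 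The computation for $\Aplus$ is identical, with the symbol $(M-i\xi)^Y$ and $\delta=M2^{-j}$. The entries depend only on $k-k'$, which is exactly the Toeplitz property.

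I expect the main obstacle to be justifying Parseval, that is, showing the integral converges absolutely. The symbol grows like $|\xi|^Y$, while $|\widehat N_p(\eta)|^2=O(|\eta|^{-2p})$. The integrand is therefore integrable whenever $2p>Y+1$, which holds for $p\ge2$ since $Y<2$. For $p=1$, with $Y\in(1,2)$, the integral is only conditionally or oscillatorily meaningful. I would handle that case as a duality pairing in $H^{-s}\times H^{s}$ with $s=Y/2$, using the energy-space setting of Section~\ref{ssec:wellposedness}, or simply restrict to $p\ge2$, as the paper's discretization does.

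A second point to check is the sign convention: the inverse transform must be $\frac{1}{2\pi}\int\widehat f(\xi)e^{ix\xi}\,d\xi$, consistent with \eqref{eq:bspline_fourier}, so that the translation by $k2^{-j}$ produces $e^{-ik2^{-j}\xi}$ and the relative phase comes out as $e^{+i(k-k')\eta}$.
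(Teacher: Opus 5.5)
Your proposal is correct and follows the paper's proof essentially step for step: Plancherel with the symbol of Proposition~\ref{prop:tempered_symbols}, insertion of \eqref{eq:phi_fourier}, and the substitution $\eta=2^{-j}\xi$. Your checks of the principal-branch factorization and of absolute integrability under $2p>Y+1$ are details the paper leaves implicit. One correction to your aside: for $p=1$ and $Y\in(1,2)$ the $H^{-s}\times H^{s}$ duality rescue is not available, because $N_1\notin H^{Y/2}(\mathbb{R})$ and the entries with $|k-k'|\le 1$ genuinely diverge, so restricting to $p\ge2$ is the right fix.
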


\begin{proof}
By Plancherel's identity and Proposition~\ref{prop:tempered_symbols},
\begin{align*}
(\Aminus)_{kk'}
&=\frac{1}{2\pi}\int_{\mathbb{R}}
\overline{\widehat{\bsphi}_{j,k}(\xi)}
(G+i\xi)^Y
\widehat{\bsphi}_{j,k'}(\xi)\,d\xi .
\end{align*}
Using \eqref{eq:phi_fourier},
\begin{align*}
\overline{\widehat{\bsphi}_{j,k}(\xi)}
\widehat{\bsphi}_{j,k'}(\xi)
=2^{-j}e^{i(k-k')2^{-j}\xi}
|\widehat N_p(2^{-j}\xi)|^2.
\end{align*}
With the change of variables $\eta=2^{-j}\xi$, we obtain
\begin{align*}
(\Aminus)_{kk'}
&=\frac{1}{2\pi}\int_{\mathbb{R}}
(G+i2^j\eta)^Y|\widehat N_p(\eta)|^2
 e^{i(k-k')\eta}\,d\eta  \\
&=2^{jY}\frac{1}{2\pi}\int_{\mathbb{R}}
(G2^{-j}+i\eta)^Y|\widehat N_p(\eta)|^2
 e^{i(k-k')\eta}\,d\eta .
\end{align*}
This proves \eqref{eq:Aminus_toeplitz_correct}--\eqref{eq:LambdaGminus}.
The proof of \eqref{eq:Aplus_toeplitz_correct}--\eqref{eq:LambdaMplus} is identical,
using the symbol $(M-i\xi)^Y$.
\end{proof}

\begin{remark}[Boundary corrections]
The exact Toeplitz structure holds for the translation-invariant full-line
operator and the infinite uniform family $\{\bsphi_{j,k}\}_{k\in\mathbb{Z}}$.
On a truncated interval, the interior block remains Toeplitz, while boundary
conditions and modified endpoint basis functions introduce finite boundary
corrections.  These boundary corrections should be treated separately from
tempering; they are not an ``approximately Toeplitz'' tempering effect.  In
the scheme of Section~\ref{sec:galerkin_scheme} the issue is sidestepped
entirely: the homogeneous trial and test space uses only interior-supported
splines, so its Galerkin matrix is \emph{exactly} a principal Toeplitz
submatrix (Lemma~\ref{lem:conformity}), and all exterior data enter through
the lifting and forcing.
\end{remark}

\subsection{Exact Untempered Formulae and Tempered Evaluation}
\label{ssec:explicit_formulas}

The Fourier formulae above are the most stable way to define the tempered
coefficients.  In the untempered limit $\delta=0$, the coefficients also
admit closed-form finite-difference expressions.

\begin{proposition}[Closed form for the untempered left coefficients]
\label{prop:untempered_closed_form}
Let $p\geq 1$ and $0<Y<2$.  For $\delta=0$,
\begin{equation}
\label{eq:Lambda_untempered_general}
\Lambda_0^-(\ell)
=\frac{1}{\Gamma(2p-Y)}
\sum_{m=0}^{2p}(-1)^m\binom{2p}{m}
(\ell+p-m)_+^{2p-Y-1}.
\end{equation}
Moreover,
\begin{equation}
\label{eq:Lambda_untempered_right}
\Lambda_0^+(\ell)=\Lambda_0^-(-\ell).
\end{equation}
\end{proposition}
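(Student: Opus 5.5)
Set $\delta=0$ in \eqref{eq:LambdaGminus}. Then
\[
\Lambda_0^-(\ell)=\frac{1}{2\pi}\int_{\mathbb R}(i\eta)^Y|\widehat N_p(\eta)|^2e^{i\ell\eta}\,d\eta .
\]
The first observation is that $|\widehat N_p(\eta)|^2=\widehat N_p(\eta)\overline{\widehat N_p(\eta)}$ is the Fourier transform of the autocorrelation $N_p(\cdot)\ast N_p(-\cdot)$. Since $N_p(-x)=N_p(x-p+p)$ is a translate of $N_p$ by symmetry of the B-spline about $p/2$, this autocorrelation equals $N_{2p}(x+p)$, the centered B-spline of order $2p$. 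Concretely, $|\widehat N_p(\eta)|^2=e^{ip\eta}\widehat N_{2p}(\eta)$, which one checks directly from \eqref{eq:bspline_fourier}:
\[
\Bigl(\frac{1-e^{-i\eta}}{i\eta}\Bigr)^p\Bigl(\frac{1-e^{i\eta}}{-i\eta}\Bigr)^p
=e^{ip\eta}\Bigl(\frac{1-e^{-i\eta}}{i\eta}\Bigr)^{2p}.
\]
Hence $\Lambda_0^-(\ell)$ is the inverse Fourier transform of $(i\eta)^Y\widehat N_{2p}(\eta)$ evaluated at $\ell+p$. In other words, $\Lambda_0^-(\ell)=\bigl({}_{-\infty}D^Y_x N_{2p}\bigr)(\ell+p)$, using the left Riemann--Liouville symbol $(i\xi)^Y$ recalled after Proposition~\ref{prop:tempered_symbols}.

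It then remains to compute the left Riemann--Liouville derivative of $N_{2p}$ from the truncated-power form \eqref{eq:bspline_truncated_power} with $p$ replaced by $2p$. The standard identity is
\[
{}_{-\infty}D^Y_x (x-m)_+^{\beta}=\frac{\Gamma(\beta+1)}{\Gamma(\beta+1-Y)}(x-m)_+^{\beta-Y},
\]
applied with $\beta=2p-1$. Because $\Gamma(2p)$ cancels against the prefactor $1/\Gamma(2p)$, this gives
\[
\frac{1}{\Gamma(2p-Y)}\sum_{m=0}^{2p}(-1)^m\binom{2p}{m}(x-m)_+^{2p-1-Y}.
\]
Evaluating at $x=\ell+p$ yields exactly \eqref{eq:Lambda_untempered_general}.

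The main obstacle is justifying the termwise computation, since the individual truncated powers are not integrable at $+\infty$. I would handle this in one of two ways. The first option is to note that ${}_{-\infty}D^Y$ of a function vanishing on $(-\infty,0)$ coincides with the Riemann--Liouville derivative based at $0$. That derivative is local in the sense that its value at $x$ involves only $(-\infty,x]$, so it acts termwise on the finite sum at each fixed $x$. The second option is to argue via the $\delta\to0^+$ limit of the Laplace-type representation. For $p\ge1$ and $Y<2$, the integrand $(i\eta)^Y|\widehat N_p|^2$ decays like $|\eta|^{Y-2p}$, so it is integrable for $p\ge2$. For $p=1$ the Fourier integral must be read as a limit, with dominated convergence in $\delta$ passing from \eqref{eq:LambdaGminus}. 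I would also confirm that the principal branch $(i\eta)^Y=|\eta|^Ye^{i\pi Y\operatorname{sgn}(\eta)/2}$ matches the symbol of ${}_{-\infty}D^Y$.

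For \eqref{eq:Lambda_untempered_right}, set $\delta=0$ in \eqref{eq:LambdaMplus} and substitute $\eta\mapsto-\eta$. The factor $(-i\eta)^Y$ becomes $(i\eta)^Y$, $|\widehat N_p(-\eta)|^2=|\widehat N_p(\eta)|^2$, and $e^{i\ell\eta}$ becomes $e^{-i\ell\eta}$. This gives $\Lambda_0^+(\ell)=\Lambda_0^-(-\ell)$ immediately.
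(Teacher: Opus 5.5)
Your argument is correct and is essentially the paper's own proof. The autocorrelation identity $R_p(t)=N_{2p}(t+p)$, i.e.\ $|\widehat N_p(\eta)|^2=e^{ip\eta}\widehat N_{2p}(\eta)$, reduces $\Lambda_0^-(\ell)$ to $\bigl({}_{-\infty}D^Y_x N_{2p}\bigr)(\ell+p)$; the Riemann--Liouville derivative of truncated powers then gives \eqref{eq:Lambda_untempered_general}, and $\eta\mapsto-\eta$ gives \eqref{eq:Lambda_untempered_right}. (Your direct Fourier check also covers the garbled translate, which should read $N_p(-x)=N_p(x+p)$.)

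One correction concerns your $p=1$ remark. For $Y<1$ the integrand is $O(|\eta|^{Y-2})$ and already integrable, so no limit is needed. For $Y\ge1$ no $\delta\to0^+$ dominated-convergence argument can work, because the tempered integrand has the same $|\eta|^{Y-2}$ tail. In fact the integrals for $|\ell|\le1$ genuinely diverge, which matches the $(0)_+^{1-Y}$ terms in the formula and the fact that $N_1\notin H^{Y/2}$. So the identity is meaningful only when $2p-1>Y$.
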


The proof --- via the B-spline autocorrelation identity
$R_p(t)=N_{2p}(t+p)$ and the Riemann--Liouville derivative of truncated
powers --- is given in the supplementary materials (section~SM3).

For the two low-order cases used most often in computations,
\begin{align}
\label{eq:Lambda_p2_correct}
p=2:\qquad
\Lambda_0^-(\ell)
&=\frac{1}{\Gamma(4-Y)}
\sum_{m=0}^{4}(-1)^m\binom{4}{m}(\ell+2-m)_+^{3-Y},\\
\label{eq:Lambda_p3_correct}
p=3:\qquad
\Lambda_0^-(\ell)
&=\frac{1}{\Gamma(6-Y)}
\sum_{m=0}^{6}(-1)^m\binom{6}{m}(\ell+3-m)_+^{5-Y}.
\end{align}
These formulae replace the informal expressions involving absolute values
and unspecified signs.  The positive-part convention is essential because
left and right Riemann--Liouville derivatives have one-sided algebraic tails.

For $\delta>0$, the exact coefficients are defined by
\eqref{eq:LambdaGminus}--\eqref{eq:LambdaMplus}.  They can be precomputed
once at a given resolution using high-order quadrature, or by evaluating the
Fourier integral on an oversampled grid and using an inverse FFT.  The
untempered formulae \eqref{eq:Lambda_p2_correct}--\eqref{eq:Lambda_p3_correct}
provide useful checks on the implementation as $\delta\to0$.

\subsection{Decay, FFT Application, and Optional Truncation}
\label{ssec:sparsity}

The matrices generated by the scaling basis are dense.  Their computational
advantage is Toeplitz structure, not true sparsity.  The Toeplitz structure
permits fast matrix-vector products, while banded truncation is an optional
approximation whose effect must be controlled separately.

\begin{lemma}[Algebraic decay of connection coefficients]
\label{lem:decay}
Assume $p\geq 2$ and fix $\delta_0>0$.  There exists a constant
$C=C(p,Y,\delta_0)$ such that, for all $0\leq\delta\leq\delta_0$ and all
$\ell\in\mathbb{Z}$,
\begin{equation}
\label{eq:coefficient_decay_correct}
|\Lambda_G^-(\ell;\delta)|+|\Lambda_M^+(\ell;\delta)|
\leq C(1+|\ell|)^{-(1+Y)}.
\end{equation}
For $\delta>0$, one may obtain sharper one-sided exponentially tempered
bounds, but the uniform algebraic estimate \eqref{eq:coefficient_decay_correct}
is sufficient for the discretisation analysis.
\end{lemma}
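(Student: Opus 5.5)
We prove \eqref{eq:coefficient_decay_correct} for $\Lambda_G^-$ only. The bound for $\Lambda_M^+$ then follows: $(\delta-i\eta)^Y$ is the complex conjugate of $(\delta+i\eta)^Y$ on the principal branch, and $|\widehat N_p|^2$ is even, so $\Lambda_M^+(\ell;\delta)=\overline{\Lambda_G^-(-\ell;\delta)}$. For $|\ell|\le 1$ the estimate is trivial. Indeed, $|\widehat N_p(\eta)|^2\le\min\{1,(2/|\eta|)^{2p}\}$ and $|(\delta+i\eta)^Y|\le(\delta_0+|\eta|)^Y$, so for $p\ge2$ the integral in \eqref{eq:LambdaGminus} is bounded uniformly in $\delta\in[0,\delta_0]$, since $2p-Y>1$.

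For large $|\ell|$ we trade the oscillation $e^{i\ell\eta}$ for decay by integrating by parts, using the regularity of
\[
F_\delta(\eta):=(\delta+i\eta)^Y|\widehat N_p(\eta)|^2
=(\delta+i\eta)^Y\Bigl(\frac{\sin(\eta/2)}{\eta/2}\Bigr)^{2p}.
\]
This function is $C^\infty$ away from $\eta=0$ when $\delta>0$; the only non-smooth point arises at $\delta=0$, where $F_0$ behaves like $(i\eta)^Y$ near the origin. The sinc factor is entire and equals $1+O(\eta^2)$ at $0$, and $F_\delta$ and its derivatives decay like $|\eta|^{Y-2p}$ at infinity. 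We split the integrand with a smooth cutoff $\chi$ equal to $1$ on $|\eta|\le1$ and supported in $|\eta|\le2$.

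The far part $(1-\chi)F_\delta$ is smooth, with $k$-th derivative $O(|\eta|^{Y-2p})$ uniformly in $\delta\le\delta_0$. Two integrations by parts therefore give $O(|\ell|^{-2})$, which is at most $C|\ell|^{-1-Y}$ because $Y<2$. Only two integrations are allowed while keeping integrability; this is fine, since $Y-2p<-1$.

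The main obstacle is the near part $\chi F_\delta$. This is essentially the Fourier transform of a compactly supported function with a singularity of type $(\delta+i\eta)^Y$, and we need a bound uniform as $\delta\to0$. We will use a scaling (Littlewood--Paley type) argument. The function $g_\delta(\eta)=\chi(\eta)(\delta+i\eta)^Y m(\eta)$, with $m$ smooth, satisfies the symbol estimates
\[
|g_\delta^{(k)}(\eta)|\le C_k(\delta+|\eta|)^{Y-k},
\qquad k=0,1,2,
\]
uniformly in $\delta$. We split the integral at $|\eta|=1/|\ell|$. On the inner region we bound trivially, which gives $\int_{|\eta|<1/|\ell|}(\delta_0+1)^0|\eta|^Y\,d\eta$, or more crudely $C|\ell|^{-1-Y}$. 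On the outer region we integrate by parts twice. The boundary terms are of size $|\ell|^{-1}|\ell|^{-Y}$ and $|\ell|^{-2}|\ell|^{1-Y}$. The remaining integral is $|\ell|^{-2}\int_{|\eta|>1/|\ell|}|\eta|^{Y-2}\,d\eta\lesssim|\ell|^{-2}|\ell|^{1-Y}$, because $Y-2<-1$. All of these are $O(|\ell|^{-1-Y})$, with constants depending only on $p$, $Y$ and $\delta_0$.

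As a check on the argument, we will verify consistency at $\delta=0$ against the closed form \eqref{eq:Lambda_untempered_general}. For $|\ell|>p$, the $2p$-th finite difference of $t^{2p-Y-1}$ at $\ell$ is of order $\ell^{-1-Y}$ for $\ell\to+\infty$ and vanishes identically for $\ell<-p$. This confirms that the exponent $1+Y$ is sharp.
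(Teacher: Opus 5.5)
Your reduction to $\Lambda_G^-$ via $\Lambda_M^+(\ell;\delta)=\overline{\Lambda_G^-(-\ell;\delta)}$ and your uniform bound for bounded $|\ell|$ are correct. The core estimate, however, fails in the regime $Y\in(1,2)$ that the paper actually uses, because two of your exponent comparisons hold only for $Y<1$.

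\emph{Far part.} The inequality $O(|\ell|^{-2})\le C|\ell|^{-1-Y}$ requires $Y\le 1$, not $Y<2$; for $Y>1$ the target $|\ell|^{-1-Y}$ decays faster than $|\ell|^{-2}$.

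\emph{Near part.} The claim $Y-2<-1$ is false for $Y>1$. Then $\int_{1/|\ell|<|\eta|<2}|\eta|^{Y-2}\,d\eta=O(1)$, so the remainder after two integrations by parts is only $O(|\ell|^{-2})$.

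So even at $\delta=0$ your argument gives only $|\Lambda_G^-(\ell;0)|\lesssim|\ell|^{-2}$, not the lemma. The repair is a third integration by parts. On the far part this is permitted. Contrary to your remark, every derivative of $(1-\chi)F_\delta$ is $O(|\eta|^{Y-2p})$ uniformly in $\delta\le\delta_0$, because derivatives of $\sin^{2p}(\eta/2)$ stay bounded; any number of integrations is allowed. On $|\eta|>1/|\ell|$, three integrations leave $|\ell|^{-3}\int_{|\eta|>1/|\ell|}|\eta|^{Y-3}\,d\eta\lesssim|\ell|^{-1-Y}$, since $Y-3<-1$ for every $Y<2$. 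The three boundary terms $|\ell|^{-j-1}|g_\delta^{(j)}(\pm 1/|\ell|)|$, $j=0,1,2$, are each $O(|\ell|^{-1-Y})$ provided $\delta\lesssim 1/|\ell|$.

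The second gap is uniformity in $\delta$. You flag it as the main obstacle but do not handle it. Your inner-region bound uses $|g_\delta(\eta)|\lesssim|\eta|^Y$, which is the case $\delta=0$; your own symbol estimate gives only $(\delta+|\eta|)^Y$. When $\delta\gg 1/|\ell|$, both the inner integral and the first boundary term at $|\eta|=1/|\ell|$ are of size $\delta^Y/|\ell|$. They cancel in the exact integral, but not once you take absolute values.

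A case split closes this:
\begin{itemize}
\item For $\delta\le 1/|\ell|$, use your splitting with three integrations.
\item For $\delta>1/|\ell|$, do not split. Here $g_\delta$ is smooth and compactly supported, so three integrations by parts over $\mathbb R$ give $|\ell|^{-3}\int_{\mathbb R}(\delta+|\eta|)^{Y-3}\,d\eta\lesssim |\ell|^{-3}\delta^{Y-2}\le|\ell|^{-1-Y}$.
\end{itemize}

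With both repairs, your direct frequency-space argument is a valid alternative to the paper's route. The paper takes the $\delta=0$ decay from the closed-form $2p$-th finite difference (the computation you use only as a check) and then passes to $\delta\in[0,\delta_0]$ by a uniform multiplier argument.

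Uniformity is also nearly free in physical space. $\Lambda_G^-(\ell;\delta)$ is the $\delta$-tempered left derivative of the autocorrelation $R_p=N_{2p}(\cdot+p)$ evaluated at $\ell$. It vanishes for $\ell\le -p$. For $\ell>p$ it equals $\Gamma(-Y)^{-1}\int e^{-\delta(\ell-t)}R_p(t)(\ell-t)^{-1-Y}\,dt$, which is bounded by $|\Gamma(-Y)|^{-1}(\ell-p)^{-1-Y}$ for every $\delta\ge 0$.
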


The proof --- a finite-difference tail estimate for the untempered
coefficients combined with a uniform Fourier-multiplier argument for
$\delta\in[0,\delta_0]$ --- is given in the supplementary materials
(section~SM3).

\begin{proposition}[Fast Toeplitz application]
\label{prop:toeplitz_matvec}
Let $T_N$ be the $N\times N$ Toeplitz matrix with entries
$(T_N)_{kk'}=\Lambda(k-k')$, where $\Lambda$ denotes either
$\Lambda_G^-(\cdot;G2^{-j})$ or $\Lambda_M^+(\cdot;M2^{-j})$.  Then
$T_N\mathbf c$ can be computed in $O(N\log N)$ operations by embedding
$T_N$ in a circulant matrix of size at least $2N$ and applying the FFT.
Consequently, the fractional CGMY part can be applied in $O(N\log N)$ time
without replacing the dense Toeplitz matrix by a sparse banded matrix.
\end{proposition}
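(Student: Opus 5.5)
The proof is the classical circulant-embedding argument; the only thing to check is that the embedding reproduces $T_N$ exactly. First, by Theorem~\ref{thm:tempered_connection_coefficients} the matrix $T_N$ is a principal $N\times N$ section of a bi-infinite Toeplitz matrix. It is therefore determined by the $2N-1$ numbers $\Lambda(\ell)$, $|\ell|\le N-1$. These are precomputed once from \eqref{eq:LambdaGminus} or \eqref{eq:LambdaMplus}; that one-time cost is not part of the matrix--vector cost.

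Fix any $L\ge 2N-1$, for instance the smallest power of two with $L\ge 2N$. Define the first column $\mathbf t\in\mathbb C^L$ of a circulant $C_L$ by
\[
t_m=\Lambda(m)\ \ (0\le m\le N-1),\qquad t_{L-m}=\Lambda(-m)\ \ (1\le m\le N-1),
\]
and set all remaining entries to zero. Then $(C_L)_{kk'}=t_{(k-k')\bmod L}$.

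For $0\le k,k'\le N-1$ we have $|k-k'|\le N-1<L-(N-1)$. Hence the index $(k-k')\bmod L$ lands in exactly the slot holding $\Lambda(k-k')$, with no wrap-around collision. This shows that the leading $N\times N$ block of $C_L$ is $T_N$.

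Consequently, if $\mathbf c$ is padded with zeros to $\tilde{\mathbf c}\in\mathbb C^L$, the first $N$ entries of $C_L\tilde{\mathbf c}$ equal $T_N\mathbf c$. This index bookkeeping is the only step where care is needed. The sign convention $\ell=k-k'$ for $\Aminus$ versus $\Aplus$ must be placed correctly in the upper and lower halves of $\mathbf t$; otherwise one silently applies the transpose.

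Next, circulants are diagonalized by the discrete Fourier transform: $C_L=F_L^{-1}\operatorname{diag}(F_L\mathbf t)F_L$. Therefore
\[
C_L\tilde{\mathbf c}=F_L^{-1}\bigl((F_L\mathbf t)\odot(F_L\tilde{\mathbf c})\bigr).
\]
The vector $F_L\mathbf t$ can be precomputed once. Each product then costs one forward FFT, one pointwise product and one inverse FFT, i.e.\ $O(L\log L)=O(N\log N)$ operations since $L\le 4N$.

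For the CGMY fractional part $-C\Gamma(-Y)(\Aminus+\Aplus)$, two such products are needed. Alternatively, and more cheaply, one can form the single Toeplitz symbol $\Lambda_G^-+\Lambda_M^+$ and embed that. Either way the total cost is $O(N\log N)$. This requires no truncation of $\Lambda$, so Lemma~\ref{lem:decay} is not needed. The dense Toeplitz matrix is applied exactly, up to floating-point rounding of the FFT.
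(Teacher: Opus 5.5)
Your proposal is correct and follows the same circulant-embedding argument as the paper, which places the Toeplitz data in a circulant vector of length $2N$ and evaluates the resulting circular convolution with one forward FFT, one pointwise product, and one inverse FFT. Your explicit index bookkeeping (taking $L\ge 2N-1$ to rule out wrap-around collisions) and the diagonalization $C_L=F_L^{-1}\operatorname{diag}(F_L\mathbf t)F_L$ supply the details that the paper treats as standard.
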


\begin{proof}
Standard circulant embedding: the Toeplitz data are placed in a circulant
vector of length $2N$, converting the product into a circular convolution
evaluated by one forward FFT, one pointwise product, and one inverse FFT.
\end{proof}

\begin{proposition}[Optional finite-band truncation]
\label{prop:optional_truncation}
Let $T_N^{(B)}$ be obtained from $T_N$ by setting entries with
$|k-k'|>B$ to zero.  Then
\begin{equation}
\label{eq:relative_truncation_tail}
\|T_N-T_N^{(B)}\|_{\ell^2\to\ell^2}
\leq C B^{-Y}.
\end{equation}
After including the fractional scaling factor, the corresponding stiffness
matrix perturbation is bounded by $C2^{jY}B^{-Y}$ in coefficient
$\ell^2$-norm.
\end{proposition}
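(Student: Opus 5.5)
The bound follows from Lemma~\ref{lem:decay} and a Schur-test estimate for the discarded part of the Toeplitz matrix. The difference $E:=T_N-T_N^{(B)}$ is itself Toeplitz, with entries $E_{kk'}=\Lambda(k-k')\mathbf 1_{\{|k-k'|>B\}}$. For any matrix, Schur's test gives
\begin{equation*}
\|E\|_{\ell^2\to\ell^2}\le\Bigl(\max_k\sum_{k'}|E_{kk'}|\Bigr)^{1/2}\Bigl(\max_{k'}\sum_{k}|E_{kk'}|\Bigr)^{1/2}.
\end{equation*}
In the Toeplitz case both the maximal row sum and the maximal column sum are bounded by the same quantity, $\sum_{|\ell|>B}|\Lambda(\ell)|$. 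This sum does not depend on $N$, so the resulting estimate holds uniformly in the matrix size.

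For the tail, apply Lemma~\ref{lem:decay} with $\delta_0:=\max\{G,M\}$. This is admissible because $\delta=G2^{-j}$ and $\delta=M2^{-j}$ both lie in $[0,\delta_0]$ for every $j\ge0$, so the constant is uniform in the resolution level. Then
\begin{equation*}
\sum_{|\ell|>B}|\Lambda(\ell)|\le 2C\sum_{\ell>B}(1+\ell)^{-(1+Y)}\le 2C\int_B^\infty(1+t)^{-(1+Y)}\,dt=\frac{2C}{Y}(1+B)^{-Y}\le C' B^{-Y},
\end{equation*}
which yields \eqref{eq:relative_truncation_tail} with $C'$ depending only on $p$, $Y$, $G$, $M$. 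Note that $Y>0$ is all that is needed here; no restriction to $Y\in(1,2)$ enters.

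For the stiffness-matrix statement, Theorem~\ref{thm:tempered_connection_coefficients} shows that the matrices $\Aminus$ and $\Aplus$ equal $2^{jY}$ times Toeplitz matrices generated by $\Lambda_G^-$ and $\Lambda_M^+$, respectively. Truncating both at bandwidth $B$ perturbs each by $2^{jY}E$ with the corresponding $\Lambda$. The fractional part of the stiffness matrix carries the prefactor $C\Gamma(-Y)$, so by the triangle inequality its perturbation is at most $2|C\Gamma(-Y)|\,C'2^{jY}B^{-Y}$ in the coefficient $\ell^2$-norm. The drift and reaction terms are not truncated and do not contribute.

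The only real input is the uniform-in-$\delta$ decay of Lemma~\ref{lem:decay}. Once that is available, the argument is routine, and the one point needing care is to state explicitly that the constant is independent of $j$ through the choice $\delta_0=\max\{G,M\}$.
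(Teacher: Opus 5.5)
Your proposal is correct and takes essentially the same route as the paper. The paper bounds the discarded tail $\sum_{|\ell|>B}|\Lambda(\ell)|\le CB^{-Y}$ via Lemma~\ref{lem:decay} and then invokes Young's inequality for discrete convolution, which gives the same $\ell^1$-kernel bound on the $\ell^2$ operator norm as your Schur test; your choice $\delta_0=\max\{G,M\}$, which makes the constant uniform in $j$, and your tracking of the prefactor $C\Gamma(-Y)$ are details the paper leaves implicit.
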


\begin{proof}
By Lemma~\ref{lem:decay},
\begin{equation*}
\sum_{|\ell|>B}|\Lambda(\ell)|
\leq C\sum_{\ell>B}\ell^{-(1+Y)}
\leq C B^{-Y}.
\end{equation*}
Young's inequality for discrete convolution gives
\eqref{eq:relative_truncation_tail}.  Multiplication by $2^{jY}$ gives the
scaled stiffness estimate.
\end{proof}

\begin{remark}[Scope of the truncation result]
The rigorous discretisation results below are based on the full Toeplitz
operator applied by FFT; Proposition~\ref{prop:optional_truncation} justifies
finite-band truncation only as a controllable implementation approximation at
a prescribed tolerance.  It does \emph{not} show that a sublinear bandwidth
$B=N^\beta$, $\beta<1$, preserves the full Galerkin rate, which would require
a genuine multilevel wavelet compressibility theorem.
\end{remark}


\section{The B-Spline Galerkin Scheme}
\label{sec:galerkin_scheme}

\subsection{Backward-Time Operator and Sign Convention}
\label{ssec:backward_operator}

The Galerkin scheme discretises exactly the backward-time operator
$\mathcal A$ defined in \eqref{eq:backward_operator_A}.  Thus the continuous
weak form, the semidiscrete matrix, the shifted projection, and the
Crank--Nicolson stability analysis all use the same bilinear form
\begin{equation}
\label{eq:scheme_bilinear_same_a}
a(u,w)=\langle \mathcal A u,w\rangle .
\end{equation}
The sign convention is therefore
\begin{equation}
\partial_\tau V+\mathcal A V=0,
\qquad V(x,0)=\psi(x),
\end{equation}
with $\mathcal A$ given by
\eqref{eq:backward_operator_A}.  This is the operator whose discrete matrix is
assembled below.

\subsection{Physical B-Spline Space and Spatial Discretisation}
\label{ssec:spatial_disc}

Let
\begin{equation}
\Omega=(x_{\min},x_{\max}),
\qquad
h=\frac{x_{\max}-x_{\min}}{N}.
\end{equation}
We use cardinal B-splines on the uniform physical grid and restrict them to
$\Omega$ for the mass and derivative matrices.  The active index set is
\begin{equation}
\label{eq:active_index_set}
\mathcal I_h
:=\bigl\{k\in\mathbb Z:
\operatorname{supp}\phi_{h,k}\cap\Omega\ne\emptyset\bigr\},
\end{equation}
where
\begin{equation}
\label{eq:physical_bspline_basis}
\phi_{h,k}(x)
:=h^{-1/2}N_p\!\left(\frac{x-x_{\min}}{h}-k\right),
\qquad k\in\mathcal I_h .
\end{equation}
The factor $h^{-1/2}$ gives an $L^2$-normalised basis.  The
\emph{homogeneous} trial and test space uses only the interior-supported
splines,
\begin{equation}
\label{eq:interior_index_set}
\mathcal I_h^0
:=\bigl\{k\in\mathbb Z:
\operatorname{supp}\phi_{h,k}\subset[x_{\min},x_{\max}]\bigr\}
=\{0,1,\dots,N-p\},
\end{equation}
\begin{equation}
\label{eq:Vh0_def}
V_h^0:=\operatorname{span}\{\phi_{h,k}:k\in\mathcal I_h^0\}.
\end{equation}
Each element of $V_h^0$ is supported in $\overline\Omega$, so it coincides
with its own zero extension; this makes $V_h^0$ conforming for the
zero-extension energy space of Section~\ref{ssec:finite_domain_framework}
(Lemma~\ref{lem:conformity} below).  The boundary-crossing splines
$k\in\mathcal I_h\setminus\mathcal I_h^0$ are used only to represent the
discrete lifting; in the theoretical formulation, the physical approximation
is
\begin{equation}
\label{eq:affine_lifted_space}
V_h^g(\tau)=g_h(\tau)+V_h^0,
\end{equation}
where $g_h$ is the discrete lifting from Section~\ref{ssec:bc}.  Endpoint
basis functions are not assumed to be periodic.  All far-field and lifting
data enter the homogeneous problem only through forcing terms; the
homogeneous Galerkin matrix requires no boundary-modified rows.

The physical-grid version of Theorem~\ref{thm:tempered_connection_coefficients}
is
\begin{equation}
\label{eq:physical_left_coeff}
(K_G^-)_{kk'}
=
\int_{\mathbb R}\phi_{h,k}(x)\mathcal D_G^-\phi_{h,k'}(x)\,dx
=
h^{-Y}\Lambda_G^-(k-k';Gh),
\end{equation}
for the full-line interior block, and
\begin{equation}
\label{eq:physical_right_coeff}
(K_M^+)_{kk'}
=
\int_{\mathbb R}\phi_{h,k}(x)\mathcal D_M^+\phi_{h,k'}(x)\,dx
=
h^{-Y}\Lambda_M^+(k-k';Mh).
\end{equation}
Thus the fractional stiffness matrices scale like $h^{-Y}$ on the physical
mesh.  For the homogeneous space $V_h^0$ these formulae are exact --- no
boundary-modified rows arise (Lemma~\ref{lem:conformity} below) --- and all
exterior and lifting data enter as forcing terms, discussed in
Section~\ref{ssec:bc}.

For test functions $w_h\in V_h^0$,
the homogeneous interior Galerkin form is
\begin{equation}
\label{eq:ah_def}
a_h(u_h,w_h):=(\mathcal A u_h,w_h)_{L^2(\Omega)}.
\end{equation}

\begin{lemma}[Conformity and exactness of the discrete Galerkin form]
\label{lem:conformity}
Let $p\ge2$ and $s=Y/2\in(1/2,1)$.  Then
$V_h^0\subset H^s_{M,G}(\Omega)$, and for all $u_h,w_h\in V_h^0$,
\begin{equation}
\label{eq:ah_equals_aOmega}
a_h(u_h,w_h)=a_\Omega(u_h,w_h)
=a_{\mathbb R}(u_h,w_h).
\end{equation}
Consequently, the homogeneous fractional stiffness blocks are exactly the
principal submatrices of the full-line Toeplitz connection-coefficient
matrices indexed by $\mathcal I_h^0$ --- themselves Toeplitz --- and no
boundary corrections arise in the homogeneous problem.
\end{lemma}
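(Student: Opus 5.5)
The proof has three parts: membership of $V_h^0$ in the energy space, the identity of the three forms, and the Toeplitz structure of the stiffness blocks.

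\emph{Step 1: conformity.} For $p\ge2$, each $\phi_{h,k}$ with $k\in\mathcal I_h^0$ is continuous and piecewise polynomial, so it lies in $H^1(\mathbb R)$. Its support is contained in $[x_{\min},x_{\max}]$, so it coincides with its zero extension: $\widetilde{\phi_{h,k}}=\phi_{h,k}$ on $\mathbb R$. Hence $\widetilde{\phi_{h,k}}\in H^1(\mathbb R)\subset H^s(\mathbb R)$ for $s<1$. By \eqref{eq:zero_extension_space}, $\phi_{h,k}\in H^s_{M,G}(\Omega)$, and linearity gives $V_h^0\subset H^s_{M,G}(\Omega)$. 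As a consistency check, each spline vanishes at the endpoints because $N_p(0)=N_p(p)=0$ for $p\ge2$, in line with the $H^s_0$ characterization.

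\emph{Step 2: equality of forms.} The second equality $a_\Omega(u_h,w_h)=a_{\mathbb R}(u_h,w_h)$ is the definition \eqref{eq:finite_domain_bilinear} combined with $\widetilde u_h=u_h$ and $\widetilde w_h=w_h$. For the first equality, write
\[
a_{\mathbb R}(u_h,w_h)=\langle\mathcal A u_h,w_h\rangle=\frac{1}{2\pi}\int_{\mathbb R}A(\xi)\widehat u_h\overline{\widehat w_h}\,d\xi .
\]
To identify this with $(\mathcal A u_h,w_h)_{L^2(\Omega)}$, check two things.
\begin{enumerate}
\item $\mathcal A u_h$ is a genuine $L^2_{\rm loc}$ function. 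The drift term $u_h'$ is piecewise polynomial. For the tempered terms, $\mathcal D^\pm u_h$ is the Riemann--Liouville derivative of order $Y<2$ of an $H^{p}$-type compactly supported function, multiplied by an exponential. Using the truncated-power representation \eqref{eq:bspline_truncated_power}, each piece has singularities at most of type $(x-m h)_+^{p-1-Y}$, and $p-1-Y>-1$ for $p\ge2$, $Y<2$. So it is locally integrable; if needed one argues with $u_h\in H^{Y}$, which holds for $p\ge2$, $Y<3/2$, and otherwise by the explicit power formula. Either way the duality pairing with a bounded, compactly supported $w_h$ is given by the integral.
\item The pairing reduces to $\Omega$. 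Since $w_h$ vanishes outside $\overline\Omega$,
\[
\int_{\mathbb R}(\mathcal A u_h)w_h\,dx=\int_\Omega(\mathcal A u_h)w_h\,dx .
\]
Plancherel, together with Proposition~\ref{prop:tempered_symbols} (valid since $u_h$ is compactly supported), then identifies this $L^2$ integral with the Fourier form.
\end{enumerate}

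\emph{Step 3: matrix structure.} Substitute $u_h=\phi_{h,k'}$ and $w_h=\phi_{h,k}$ with $k,k'\in\mathcal I_h^0$. By Step 2, the entries equal the full-line integrals \eqref{eq:physical_left_coeff}--\eqref{eq:physical_right_coeff}. These are $h^{-Y}\Lambda(k-k';\cdot)$ by the rescaled version of Theorem~\ref{thm:tempered_connection_coefficients}, where the dilation $2^{-j}$ is replaced by $h$ and one translates by $x_{\min}$, which leaves the symbol invariant. The drift and mass matrices are likewise translation invariant. Hence the homogeneous block is the principal submatrix indexed by $\{0,\dots,N-p\}$ of a bi-infinite Toeplitz matrix, which is itself Toeplitz, and no boundary rows are modified.

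The main obstacle is the justification in Step 2 that the $L^2(\Omega)$ integral of $(\mathcal A u_h)w_h$ equals the $H^{-s}\times H^s$ duality. This requires checking integrability of $\mathcal D^\pm u_h$ near the knots and the fact that $\mathcal D^\pm u_h$ has nonzero tails outside $\Omega$. Those tails are harmless only because $w_h$ vanishes there. I would handle this by density: approximate $u_h$ by mollified functions $u_\epsilon\in C_c^\infty$, apply the identity for smooth functions, and pass to the limit using continuity \eqref{eq:full_line_continuity} on the Fourier side and the explicit $L^1_{\rm loc}$ bound on the physical side.
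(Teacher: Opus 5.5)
Your proposal is correct and follows essentially the paper's route: interior splines equal their own zero extensions, so $a_h$ is the restriction of $a_{\mathbb R}$, and the stiffness blocks are principal submatrices, indexed by the contiguous set $\{0,\dots,N-p\}$, of the full-line Toeplitz matrices. You also check, which the paper's sketch does not, that $(\mathcal A u_h,w_h)_{L^2(\Omega)}$ agrees with the Fourier form when $\mathcal A u_h$ is only $L^1_{\rm loc}$ (e.g.\ $p=2$, $Y>3/2$); your mollification argument settles this because the translation-invariant $\mathcal A$ commutes with convolution, so $\mathcal A u_\epsilon=\rho_\epsilon*\mathcal A u_h\to\mathcal A u_h$ in $L^1_{\rm loc}$.
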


The proof is given in the supplementary materials (section~SM1):
interior splines coincide with their own zero extensions, so the discrete
form is the restriction of the full-line form, and a principal submatrix of a
Toeplitz matrix with contiguous indices is Toeplitz.
With the coefficient expansion
\begin{equation}
U_h(x,\tau)=\sum_{k\in\mathcal I_h^0}c_k(\tau)\phi_{h,k}(x),
\end{equation}
the matrices are
\begin{align}
\label{eq:Mh_def}
(M_h)_{kk'}&=(\phi_{h,k'},\phi_{h,k})_{L^2(\Omega)},\\
\label{eq:Dh_def}
(D_h)_{kk'}&=(\phi'_{h,k'},\phi_{h,k})_{L^2(\Omega)},\\
\label{eq:KG_def}
(K_G^-)_{kk'}&=(\mathcal D_G^-\phi_{h,k'},\phi_{h,k})_{L^2(\Omega)},\\
\label{eq:KM_def}
(K_M^+)_{kk'}&=(\mathcal D_M^+\phi_{h,k'},\phi_{h,k})_{L^2(\Omega)}.
\end{align}
The semidiscrete operator matrix is therefore
\begin{equation}
\label{eq:Ah_def}
A_h
=
-(r-\omega_S)D_h
-C\Gamma(-Y)K_M^+
-C\Gamma(-Y)K_G^-
+\lambda_0 M_h .
\end{equation}
The mass matrix $M_h$ and derivative matrix $D_h$ are banded because the
B-spline basis has compact support.  The interior blocks of $K_G^-$ and
$K_M^+$ are Toeplitz and are applied by FFT using
Proposition~\ref{prop:toeplitz_matvec}.  Equation \eqref{eq:Ah_def} also
corrects the reaction term: the coefficient is $\lambda_0$, not merely $r$.

\subsection{Boundary Lifting and Exterior Contribution}
\label{ssec:bc}

For a European put, the far-field behaviour is
\begin{equation}
\label{eq:put_farfield}
V(x,\tau)\sim Ke^{-r\tau}\quad (x\to-\infty),
\qquad
V(x,\tau)\sim0\quad (x\to+\infty).
\end{equation}
Because $\mathcal D_G^-$ and $\mathcal D_M^+$ are nonlocal, pointwise
Dirichlet data at $x_{\min}$ and $x_{\max}$ do not define the truncated
operator.  The exterior extension is part of the numerical model.

The rigorous analysis uses the smooth lifting already introduced in
\eqref{eq:continuous_lifting_split}.  At the discrete level we write
\begin{equation}
\label{eq:lifting_split}
V_h(x,\tau)=g_h(x,\tau)+U_h(x,\tau),
\qquad U_h(\tau)\in V_h^0,
\end{equation}
where $g_h$ is a spline or projected approximation of the smooth lifting $g$.
The homogeneous unknown $U_h$ is the variable advanced in time.  Its
semidiscrete weak problem is
\begin{equation}
\label{eq:semidiscrete_forced_weak}
(\partial_\tau U_h,w_h)_{L^2(\Omega)}+a_h(U_h,w_h)
=\ell_{\rm bc}(w_h;\tau),
\qquad w_h\in V_h^0,
\end{equation}
where
\begin{equation}
\label{eq:bc_functional}
\ell_{\rm bc}(w_h;\tau)
=-(\partial_\tau g_h,w_h)_{L^2(\Omega)}-a_h(g_h,w_h)
+\ell_{\rm ext}(w_h;\tau).
\end{equation}
The term $\ell_{\rm ext}$ collects the part of the nonlocal operators caused
by the prescribed values outside $\Omega$ after the lifting and homogeneous
extension have been selected.

For implementation, one may use the financially natural piecewise exterior
extension
\begin{equation}
\label{eq:exterior_extension}
V_{\rm ext}(x,\tau)=
\begin{cases}
Ke^{-r\tau}, & x<x_{\min},\\
V_h(x,\tau), & x\in\Omega,\\
0, & x>x_{\max}.
\end{cases}
\end{equation}
This piecewise extension is simple and often accurate on a sufficiently large
truncated interval.  However, the convergence proofs are stated for the
smooth-lifting formulation because the fractional derivatives with
$Y\in(1,2)$ can detect jumps at the artificial boundaries.  Thus the
piecewise extension should be regarded as an implementation choice whose
boundary effect is included in the exterior/truncation error.

In coefficient form, with
$U_h(x,\tau)=\sum_{k\in\mathcal I_h^0}c_k(\tau)\phi_{h,k}(x)$,
\eqref{eq:semidiscrete_forced_weak} becomes
\begin{equation}
\label{eq:matrix_ode_forced}
M_h\dot{\mathbf c}(\tau)+A_h\mathbf c(\tau)=\mathbf f_{\rm bc}(\tau),
\end{equation}
where $\mathbf f_{\rm bc}$ is the coefficient vector associated with
\eqref{eq:bc_functional}.  If homogeneous far-field conditions are used in a
manufactured-solution test, then $g_h=0$ and $\mathbf f_{\rm bc}=0$.

\begin{proposition}[Domain-truncation estimate]
\label{prop:domain_error}
Let $\Delta_L=\log K-x_{\min}$ and
$\Delta_R=x_{\max}-\log K$.  Under the exponentially tempered CGMY tails and
under the smooth-lifting assumptions \eqref{eq:lifting_regular_assumption},
the truncation error satisfies the qualitative bound
\begin{equation}
\label{eq:two_sided_domain_error}
\sup_{x\in\Omega}|V(x,\tau)-V^{\rm trunc}(x,\tau)|
\leq
C_T\left(e^{-G\Delta_L}+e^{-M\Delta_R}\right),
\qquad 0\leq\tau\leq T,
\end{equation}
where $C_T$ depends on the maturity, payoff growth, lifting regularity, and
CGMY parameters but not on $h$.
\end{proposition}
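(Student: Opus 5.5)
The plan is to write the truncation error as the solution of a forced homogeneous problem and then exploit the exponential decay of the tempered Lévy kernel.

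Let $W:=V-V^{\rm trunc}$ on $\Omega$, where $V$ is the full-line price and $V^{\rm trunc}$ is the solution of the truncated problem. The truncated problem uses the same lifting $g$ in the exterior of $\Omega$. Both functions satisfy $\partial_\tau(\cdot)+\mathcal A(\cdot)=0$ inside $\Omega$, so they differ only through exterior data. On $\mathbb R\setminus\Omega$, $V^{\rm trunc}$ equals the far-field values ($Ke^{-r\tau}$ on the left, $0$ on the right), while $V$ is the true price there. Hence $W$ vanishes at $\tau=0$ and solves
\[
\partial_\tau W+\mathcal A W=F\quad\text{in }\Omega,
\]
with zero exterior data. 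The source $F$ is the nonlocal action of the exterior discrepancy $e:=V-V_{\rm ext}$, supported in $\mathbb R\setminus\Omega$. Written in jump-kernel form, $F(x)=\int_{\mathbb R\setminus\Omega} e(y)\,\nu(y-x)\,dy$, where $\nu$ is the CGMY Lévy density.

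The first step is to bound $e$ on the exterior. For $x<x_{\min}$, deep in the money, put–call parity gives
\[
V-Ke^{-r\tau}=-e^{x}+\text{(call value)}.
\]
The call value is bounded by $e^{x}$, so $|e(x)|\le C e^{x}\le C e^{x_{\min}}$ there. For $x>x_{\max}$, the put value is at most $K\,\mathbb Q(X_\tau<\log K)$, and a Chernoff bound using the negative exponential moment (available up to order $G$) makes this exponentially small in $x-\log K$.

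The second step bounds $F$ pointwise on $\Omega$ using the kernel tails. The left exterior enters through the kernel $e^{-G|x-y|}|x-y|^{-1-Y}$; the right exterior enters with the rate $M$. Because $\nu$ is integrable away from the origin, integrating $|e|$ against these tails gives $|F(x)|\le C(e^{-G d_L(x)}+e^{-M d_R(x)})$, where $d_L,d_R$ are the distances from $x$ to the respective exterior sets. The Chernoff estimates and the factor $e^{x_{\min}}=Ke^{-\Delta_L}$ are then traded for $e^{-G\Delta_L}$ and $e^{-M\Delta_R}$. For the left side this trade uses $G\le 1$, or else a further factor from the tempering; in general the constant absorbs $\min(1,G)$, which is why the bound is stated as qualitative.

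The third step converts the bound on $F$ into a sup-norm bound on $W$. I would use the maximum principle for the L\'evy generator: $\mathcal A$ is the negative generator plus $\lambda_0$ minus the total jump intensity, which leaves an effective killing rate $r\ge 0$. Duhamel's formula then gives $\sup|W|\le T\sup|F|$, with $F$ evaluated near $\log K$, where the payoff is nontrivial and the distances are $\Delta_L$ and $\Delta_R$. If the pointwise maximum principle is judged too strong to invoke, the fallback is the energy estimate from the finite-domain coercivity theorem (Theorem~\ref{ass:finite_domain_garding}) in the $L^2$ norm. That must be followed by a Sobolev embedding $H^{s}\hookrightarrow L^\infty$, valid since $s>1/2$, together with parabolic smoothing.

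The main obstacle is making the two exponential rates appear with exactly the exponents $G$ and $M$. This requires matching the exterior payoff growth against the tempering of the kernel, including the $e^{x}$ growth of the discrepancy on the left, and requires $M>1$ for the martingale correction. I would handle it by working with the exponentially conjugated operators $\mathcal D_G^-$ and $\mathcal D_M^+$ directly, and I would accept the constant $C_T$ absorbing the payoff-dependent factors.
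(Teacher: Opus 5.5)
Your plan (bound the omitted exterior contributions through the tempered tails of the L\'evy density, then transfer them to the price by a stability argument) is the route the paper sketches. As written, however, both the transfer step and the exponent bookkeeping break down.

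\textbf{The source $F$ is not bounded on $\Omega$.} The exterior discrepancy does not vanish at $\partial\Omega$. By your put--call parity identity, $e(x_{\min}^-,\tau)=-e^{x_{\min}}+(\text{call value})\approx-Ke^{-\Delta_L}$, and $e(x_{\max}^+,\tau)=V(x_{\max},\tau)>0$. Since $\nu(z)\sim C|z|^{-1-Y}$ at the origin, $F(x)\approx (C/Y)\,e(x_{\min}^-,\tau)\,d_L(x)^{-Y}$ as $d_L(x)\to0$. Integrability of $\nu$ away from the origin does not help, because $d_L(x)$ is not bounded below on $\Omega$. For $Y>1$ this $F$ is not in $L^1(\Omega)$, nor even in $(H^s_{M,G}(\Omega))'$; test against $w\approx d_L^{\beta}$ with $Y/2-1/2<\beta\le Y-1$. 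Also, $W$ tends to the nonzero mismatch at $\partial\Omega$, so $W\notin H^s_{M,G}(\Omega)$. Consequently:
\begin{itemize}
\item the bound $\sup|W|\le T\sup|F|$ gives nothing;
\item the energy fallback through Theorem~\ref{ass:finite_domain_garding} is unavailable;
\item ``evaluating $F$ near $\log K$'' is not a legitimate step, because Duhamel's formula or the maximum principle needs $\sup_\Omega|F|$. That supremum is infinite because of the boundary layer, and there the factors $e^{-Gd_L}$, $e^{-Md_R}$ are of size one anyway.
\end{itemize}

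\textbf{The exponent trade cannot be made by adjusting a constant.} Replacing $e^{-\Delta_L}$ by $e^{-G\Delta_L}$ costs the factor $e^{(G-1)\Delta_L}$, which is unbounded in $\Delta_L$. So ``absorbing $\min(1,G)$'' changes the rate, not the constant.

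For the supremum over all of $\Omega$, and with the far-field data $Ke^{-r\tau}$ and $0$, no argument can make the trade. $V-V^{\rm trunc}$ is continuous up to $\partial\Omega$ with the exterior mismatch as boundary value. Hence it is $\approx Ke^{-\Delta_L}$ near $x_{\min}$, and it equals $V(x_{\max},\tau)$ at $x_{\max}$. The latter decays like $e^{-G\Delta_R}$ up to algebraic factors: your Chernoff bound has order $G$, and a single negative jump of size $\Delta_R+2$ gives a matching lower bound. Since $M>1$, at least one side of \eqref{eq:two_sided_domain_error} fails for every $C_T$ independent of $\Delta_L,\Delta_R$. The rates $G$ (left) and $M$ (right) are kernel rates. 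They measure how strongly the exterior is felt at points a distance $\Delta_L$, $\Delta_R$ away from it, and only there can they be obtained.

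\textbf{A repair.} Drop the zero-extension split and treat $D=V-V^{\rm trunc}$ as an exterior-value problem: $\partial_\tau D+\mathcal AD=0$ in $\Omega$, $D(\cdot,0)=0$ in $\Omega$, and $D=e$ on $\mathbb R\setminus\Omega$ with $|e|\le K$. With $\rho$ the exit time of $X$ from $\Omega$,
\[
D(x,\tau)=\mathbb E_x\bigl[e^{-r\rho}\,e(X_\rho,\tau-\rho)\,\mathbf 1_{\{\rho\le\tau\}}\bigr].
\]
Apply Doob's inequality to $e^{\lambda(X_t-x)-t\kappa(\lambda)}$, where $\kappa(\lambda)=\log\mathbb E e^{\lambda X_1}$, at $\lambda=-G$ and $\lambda=M$ (both finite). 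This bounds the probabilities of leaving through the left and the right by time $\tau$ by $e^{T|\kappa(-G)|}e^{-Gd_L(x)}$ and $e^{T|\kappa(M)|}e^{-Md_R(x)}$, respectively. Hence $|D(x,\tau)|\le C_TK\bigl(e^{-Gd_L(x)}+e^{-Md_R(x)}\bigr)$. This gives the stated rates at $x=\log K$, and, with a fixed loss in the exponents, uniformly on any fixed neighbourhood of $\log K$, which is where the experiments measure errors. It does not give them as a supremum over $\Omega$.
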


The proof, which bounds the omitted exterior contributions by the
exponentially tempered tail integrals of the CGMY L\'evy density and
transfers them to the option value by the stability of the shifted
finite-domain problem, is given in the supplementary materials
(section~SM3).

Thus a balanced choice is
\begin{equation}
\label{eq:domain_choice}
\Delta_L\approx \frac{10}{G},
\qquad
\Delta_R\approx \frac{10}{M},
\end{equation}
which makes both exterior tail contributions approximately of size $e^{-10}$.

\subsection{Projection of the Initial Payoff}
\label{ssec:initial_projection}

The initial condition is imposed by $L^2$ projection.  For the lifted unknown
$U_h=V_h-g_h$, set
\begin{equation}
\label{eq:initial_projection}
M_h\mathbf c^0=\mathbf b^0,
\qquad
b_k^0=\bigl(\psi-g_h(\cdot,0),\phi_{h,k}\bigr)_{L^2(\Omega)}.
\end{equation}
This is preferable to pointwise interpolation in the convergence analysis.
For nonsmooth payoffs such as the European put, the kink at the strike may
reduce the observed global convergence rate unless the payoff is smoothed,
the grid is locally refined near the strike, or a manufactured smooth solution
is used for verification.

\subsection{Crank--Nicolson Time-Stepping}
\label{ssec:cn_time}

Let $\Delta\tau=T/N_T$ and
$\mathbf c^n\approx\mathbf c(n\Delta\tau)$.  Applying the trapezoidal rule to
\eqref{eq:matrix_ode_forced} gives
\begin{equation}
\label{eq:cn}
\left(M_h+\frac{\Delta\tau}{2}A_h\right)\mathbf c^{n+1}
=
\left(M_h-\frac{\Delta\tau}{2}A_h\right)\mathbf c^n
+\frac{\Delta\tau}{2}\left(\mathbf f_{\rm bc}^{n+1}+\mathbf f_{\rm bc}^{n}\right).
\end{equation}
The matrices are time-independent when the CGMY parameters are fixed.  There
are three natural implementation routes.

\begin{enumerate}[label=(\alph*), leftmargin=2.5em]
\item \textbf{Full Toeplitz, matrix-free.}
Use GMRES, BiCGSTAB, or another Krylov method for the linear system in
\eqref{eq:cn}.  Each fractional matrix-vector product costs $O(N\log N)$ by
FFT.  The total cost is $O(m_NN\log N)$ per time step, where $m_N$ is the
number of Krylov iterations.

\item \textbf{Toeplitz/circulant preconditioning.}
Use a circulant or banded approximation of the Toeplitz fractional operator as
a preconditioner.  This retains the exact Toeplitz operator in the residual
while reducing the Krylov iteration count.

\item \textbf{Finite-band approximation.}
Replace $K_G^-$ and $K_M^+$ by bandwidth-$B$ truncations, assemble the sparse
matrix, and factor $M_h+(\Delta\tau/2)A_h$ once.  This is often the simplest
implementation, but the band truncation is then an additional perturbation and
must be included in the numerical error budget.
\end{enumerate}

The convergence theory in Section~\ref{sec:analysis} is stated for the
untruncated Galerkin operator.  The finite-band variant is treated as an
implementation approximation.

\paragraph{Shifted scheme and the choice of $\sigma$.}
The stability analysis of Section~\ref{ssec:stability} is carried out not for
\eqref{eq:cn} directly but for the exponentially shifted unknown
\begin{equation}
\label{eq:shifted_unknown_algo}
W(\tau):=e^{-\sigma\tau}V(\tau),
\end{equation}
where $\sigma$ is any admissible shift in the sense of
Remark~\ref{rem:r_zero}.  The shifted variable solves the semidiscrete system
$M_h\dot{\mathbf d}+(A_h+\sigma M_h)\mathbf d=e^{-\sigma\tau}\mathbf f_{\rm bc}$,
obtained exactly from \eqref{eq:matrix_ode_forced}, and applying the
trapezoidal rule to \emph{this} system yields the shifted Crank--Nicolson
scheme \eqref{eq:shifted_cn_scheme}, from which
$V_h^n=e^{\sigma\tau_n}W_h^n$.  Because the trapezoidal rule does not commute
with the exponential change of variables, the shifted scheme with $\sigma>0$
is \emph{not} the algebraic image of \eqref{eq:cn}: advancing $W_h$ is a
genuine, mild modification that adds the diagonal shift $\sigma M_h$ and
rescales the output.  The choice of $\sigma$ is settled by the theory: by
Theorem~\ref{ass:finite_domain_garding} the finite-domain form is coercive
($c_2=0$) whenever $r>0$, which holds in every experiment reported here
($r=0.1$), so $\sigma=0$ is admissible and the shifted scheme
\emph{coincides identically} with the plain iteration \eqref{eq:cn} that the
code runs.  The stability and convergence theorems of
Section~\ref{sec:analysis} therefore govern exactly the algorithm as
implemented, with no diagonal shift and no output rescaling; the general
$\sigma>c_2$ formulation is retained because it also covers the boundary case
$r=0$ and extensions without coercivity.  The nonlinear American solver of
Section~\ref{ssec:american_lcp} runs the same plain scheme inside the
complementarity iteration and remains a numerical extension, outside the
scope of the linear stability theorem.

\begin{remark}[Rannacher startup]
\label{rem:rannacher}
The reported computations replace the first Crank--Nicolson step by two
implicit-Euler half-steps (Rannacher startup) to damp the nonsmooth-payoff
modes \cite{Rannacher1984,GilesCarter2006}.  This does not affect the scope
of the analysis: the implicit-Euler step is the $\theta=1$ member of the
$\theta$-family, whose energy identity replaces the midpoint value
$\bar{\mathbf d}^{n+1/2}$ in \eqref{eq:shifted_cn_scheme} by the endpoint
value $\mathbf d^{n+1}$ and yields the same shifted-energy estimate with
strictly stronger dissipation under the coercivity of
Theorem~\ref{ass:finite_domain_garding}; and the two startup steps introduce
a local $O(\Delta\tau^2)$ perturbation that leaves the global second-order
consistency of Theorem~\ref{thm:convergence} unchanged
\cite{GilesCarter2006}.
\end{remark}

\subsection{American Option LCP and Projected SOR}
\label{ssec:american_lcp}

For American puts, the spatial operator and mass matrix are exactly the same
as in the European solver.  The difference is that each Crank--Nicolson step
is replaced by a finite-dimensional linear complementarity problem.  For
clarity, this subsection writes the formulation for the full value
coefficients $\mathbf v^n$; in the lifted formulation the same equations are
applied to the homogeneous unknown after moving the lifting terms into the
right-hand side.

Let $\mathbf g$ denote the Galerkin representation of the payoff obstacle.
Because the B-spline coefficients are not point values, the obstacle is
imposed through the $L^2$ projection
\begin{equation}
\label{eq:american_obstacle_projection}
M_h\mathbf g=\mathbf b_\psi,
\qquad
(b_\psi)_k=(\psi_A,\phi_{h,k})_{L^2(\Omega)} ,
\end{equation}
so that $\mathbf g=M_h^{-1}\mathbf b_\psi$ are the coefficients of the $L^2$
projection $\psi_{A,h}:=P_h\psi_A=\sum_k g_k\phi_{h,k}$ of the payoff onto
$V_h$. This projection is required for an $L^2$-normalised B-spline basis,
whose coefficients scale like values multiplied by $h^{1/2}$ and cannot be
constrained by sampling the payoff at grid points.

\begin{remark}[Discrete obstacle versus true obstacle]
\label{rem:discrete_obstacle}
The discrete variational inequality is posed with respect to the
\emph{projected} obstacle $\psi_{A,h}=P_h\psi_A$, not the exact payoff
$\psi_A$. The coefficientwise constraint $\mathbf v^{n+1}\ge\mathbf g$ is not an
artificial device: cardinal B-splines are non-negative,
$\phi_{h,k}\ge0$, so $\mathbf v^{n+1}\ge\mathbf g$ componentwise implies the
pointwise inequality $V_h^{n+1}=\sum_k v_k^{n+1}\phi_{h,k}\ge
\sum_k g_k\phi_{h,k}=\psi_{A,h}$ on $\Omega$. Thus the reconstructed solution
stays above the projected obstacle everywhere, but not necessarily above
$\psi_A$ pointwise, since the $L^2$ projection of a convex kinked payoff can
undershoot near the strike. The gap $\psi_A-\psi_{A,h}$ is itself $O(h^{p})$
in $L^2$ and is localised near $x=\log K$; in the experiments below we monitor
$\max_x(\psi_A(x)-V_h^{N_T}(x))^+$ at the evaluation and Gauss quadrature nodes
and confirm it is at the level of the spatial discretisation error, so the
computed value satisfies the true payoff constraint up to the method's own
accuracy. A convergence theory that closes this gap uniformly (for instance by
using a monotone or interpolatory obstacle) is left as future work; we present
the American solver as a numerical extension rather than as a theorem.
\end{remark}

Define
\begin{equation}
\label{eq:american_FB_matrices}
F_h=M_h+\frac{\Delta\tau}{2}A_h,
\qquad
B_h=M_h-\frac{\Delta\tau}{2}A_h .
\end{equation}
With boundary and exterior forcing included in $\mathbf q^{n+1/2}$, the
right-hand side is
\begin{equation}
\label{eq:american_rhs}
\mathbf b^n=B_h\mathbf v^n+\mathbf q^{n+1/2},
\qquad
\mathbf q^{n+1/2}
=\frac{\Delta\tau}{2}
\left(\mathbf f_{\rm bc}^{n+1}+\mathbf f_{\rm bc}^{n}\right).
\end{equation}
The American time step is the LCP
\begin{equation}
\label{eq:american_coeff_lcp}
\mathbf v^{n+1}\ge \mathbf g,
\qquad
F_h\mathbf v^{n+1}\ge \mathbf b^n,
\qquad
(\mathbf v^{n+1}-\mathbf g)^T(F_h\mathbf v^{n+1}-\mathbf b^n)=0 .
\end{equation}
When the continuation constraint is inactive, \eqref{eq:american_coeff_lcp}
reduces to the European Crank--Nicolson system \eqref{eq:cn}.  On the contact
set, the first inequality is active and the value is projected onto the
payoff.

We solve \eqref{eq:american_coeff_lcp} by projected SOR.  For a relaxation
parameter $\omega_{\rm SOR}\in(0,2)$ and current iterate $\mathbf v^{(m)}$,
the scalar update is
\begin{align}
\label{eq:psor_update_raw}
\widetilde v_k^{(m+1)}
&=v_k^{(m)}+
\frac{\omega_{\rm SOR}}{(F_h)_{kk}}
\left(b_k^n-(F_h\mathbf v^{(m)})_k\right),\\
\label{eq:psor_update_projected}
v_k^{(m+1)}&=\max\{\widetilde v_k^{(m+1)},g_k\} .
\end{align}
The iteration is stopped when the complementarity residual is below a
prescribed tolerance.  Since the CGMY operator is nonlocal, the American LCP
is more sensitive to matrix truncation than the European linear solve.  In the
experiments below we therefore use the full Toeplitz fractional matrix, or an
FFT-based full operator in the residual, rather than a narrow finite-band
approximation inside the PSOR projection.

Because the CGMY stiffness matrix $A_h$ is nonlocal and generally
nonsymmetric, $F_h=M_h+(\Delta\tau/2)A_h$ is not guaranteed to be an
$M$-matrix, and the discrete LCP \eqref{eq:american_coeff_lcp} is not known to
be monotone.  We therefore use projected SOR as a robust practical solver and
report the complementarity residual as the convergence criterion; no
monotone-matrix or $M$-matrix convergence theory for PSOR is claimed here.
This is consistent with our presentation of the American solver as a numerical
extension rather than as a theorem.

\begin{algorithm}[ht]
\caption{American put: Crank--Nicolson plus projected SOR}
\label{alg:american_psor}
\begin{algorithmic}[1]
\Require CGMY parameters $(C,G,M,Y)$, $r$, $T$, $K$, grid size $N$, time steps $N_T$
\State Assemble $M_h$ and $A_h$ using the B-spline connection coefficients
\State Set $F_h=M_h+(\Delta\tau/2)A_h$ and $B_h=M_h-(\Delta\tau/2)A_h$
\State Compute the obstacle projection $M_h\mathbf g=\mathbf b_\psi$
\State Initialise $\mathbf v^0=\mathbf g$
\For{$n=0,1,\ldots,N_T-1$}
  \State Form $\mathbf b^n=B_h\mathbf v^n+\mathbf q^{n+1/2}$
  \State Enforce the American far-field values $V(x_{\min},\tau)\approx K-e^{x_{\min}}$ and $V(x_{\max},\tau)\approx0$
  \State Solve the LCP \eqref{eq:american_coeff_lcp} by PSOR
  \State Set $\mathbf v^{n+1}\leftarrow\max\{\mathbf v^{n+1},\mathbf g\}$ as a final safety projection
\EndFor
\State \Return $V_h(x,T)=\sum_k v^{N_T}_k\phi_{h,k}(x)$
\end{algorithmic}
\end{algorithm}

\subsection{Greeks by Analytic Differentiation of the Spline Expansion}
\label{ssec:greeks}

Delta and Gamma are computed by differentiating the spline approximation,
not by applying finite differences to option values.  This avoids an
additional finite-difference discretisation error, but it does not mean that
derivative errors have the same order as price errors.  In the usual Sobolev
scale, one derivative lowers the approximation order by one, and two
derivatives lower it by two.

With the physical basis \eqref{eq:physical_bspline_basis}, define
\begin{equation}
\xi=\frac{x-x_{\min}}{h}.
\end{equation}
Using $N_p'(z)=N_{p-1}(z)-N_{p-1}(z-1)$, we obtain
\begin{equation}
\label{eq:first_derivative_physical}
\phi'_{h,k}(x)
=
h^{-3/2}\left[
N_{p-1}(\xi-k)-N_{p-1}(\xi-k-1)
\right].
\end{equation}
For $p\geq3$, the second derivative is
\begin{equation}
\label{eq:second_derivative_physical}
\phi''_{h,k}(x)
=
h^{-5/2}\left[
N_{p-2}(\xi-k)-2N_{p-2}(\xi-k-1)+N_{p-2}(\xi-k-2)
\right].
\end{equation}
Therefore
\begin{equation}
\label{eq:Vh_derivatives}
\partial_x V_h(x,\tau)=\partial_x g_h(x,\tau)
+\sum_{k\in\mathcal I_h}c_k(\tau)\phi'_{h,k}(x),
\end{equation}
and
\begin{equation}
\label{eq:Vh_second_derivative}
\partial_{xx} V_h(x,\tau)=\partial_{xx}g_h(x,\tau)
+\sum_{k\in\mathcal I_h}c_k(\tau)\phi''_{h,k}(x).
\end{equation}
Since the paper uses the log-price convention $x=\log S$ from
\eqref{eq:log_price_convention}, the financial Greeks are recovered from the
log-price derivatives by
\begin{equation}
\label{eq:greeks_correct}
\Delta=e^{-x}\partial_xV_h,
\qquad
\Gamma=e^{-2x}\left(\partial_{xx}V_h-\partial_xV_h\right).
\end{equation}
For accurate Gamma recovery, it is preferable to use at least quadratic
splines and, in practice, cubic or higher-order splines are more robust near
the payoff kink.

\section{Convergence and Stability Analysis}
\label{sec:analysis}

This section analyzes the scheme actually used in
Section~\ref{ssec:bc}--\ref{ssec:cn_time}, namely the forced parabolic
semidiscretisation for the homogeneous lifted unknown.  For notational
simplicity we write $V_h$ for the homogeneous space $V_h^0$ in the analysis:
\begin{equation}
\label{eq:analysis_forced_weak}
(\partial_\tau V_h,w_h)_{L^2(\Omega)}+a_h(V_h,w_h)
=\ell_{\rm bc}(w_h;\tau),
\qquad w_h\in V_h .
\end{equation}
Here $\Omega=(x_{\min},x_{\max})$, and the continuous counterpart uses the
finite-domain form $a_\Omega$ from \eqref{eq:finite_domain_bilinear}.  The
physical option price is recovered by adding the lifting $g_h$ from
\eqref{eq:lifting_split}.  The exterior and lifting contributions enter only
through the forcing functional $\ell_{\rm bc}$.

The rigorous estimates in this section are stated for the infinite-variation
regime $Y\in(1,2)$.  The full-line G\aa rding inequality and its coercive
refinement for $r>0$ are proved in Theorem~\ref{thm:wellposed}, and the
finite-domain counterpart --- with the same constants --- is proved in
Theorem~\ref{ass:finite_domain_garding} by the zero-extension reduction.
Throughout this section $H^s_{M,G}(\Omega)$ denotes the zero-extension space
\eqref{eq:zero_extension_space}, $\sigma$ denotes an admissible shift in the
sense of Remark~\ref{rem:r_zero} (in particular $\sigma=0$ when $r>0$), and
the lifting is assumed to satisfy the buffer-support
Assumption~\ref{ass:buffer_lifting}, so that the homogeneous unknown and its
time derivative vanish on the buffer strips $\Omega_b$.  The finite-variation
case $Y\in(0,1)$ is included in the numerical experiments, but its complete
variational analysis requires a separate drift-dominated argument.

\subsection{Approximation by B-spline Spaces}
\label{ssec:approx}

Let $s=Y/2$ and let $V_h=V_h^0$ denote the interior B-spline trial space
\eqref{eq:Vh0_def}, which is conforming for the zero-extension energy space
by Lemma~\ref{lem:conformity}.  We use the following approximation estimate
on the truncated computational interval, stated for the class of functions
actually approximated in the parabolic argument: by
Assumption~\ref{ass:buffer_lifting}, the homogeneous unknown and its time
derivative vanish on the buffer strips $\Omega_b$.

\begin{lemma}[Jackson-type estimate on $\Omega$]
\label{lem:jackson}
Let the B-spline mesh be quasi-uniform with mesh size $h\le\delta_b/p$, and
let the B-spline order be $p$.  For $0\le q\le p$ and $v\in H^p(\Omega)$
vanishing on the buffer strips $\Omega_b$,
\begin{equation}
\label{eq:jackson_omega}
\inf_{w_h\in V_h}\norm{v-w_h}_{H^q(\Omega)}
\le C h^{p-q}\norm{v}_{H^p(\Omega)},
\end{equation}
where $C$ is independent of $h$, and the infimum is attained by splines
supported in $\overline\Omega\setminus\Omega_b^{\,\circ}$ up to a layer of
width $ph$.
\end{lemma}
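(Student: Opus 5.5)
The approach is to reduce to the full-line estimate \eqref{eq:bspline_approximation_order} through zero extension, and then localize the approximant so that it uses only interior-supported splines.

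\emph{Extension.} Since $v\in H^p(\Omega)$ vanishes on the buffer strips $\Omega_b$, its zero extension $\widetilde v$ vanishes on a neighbourhood of $\mathbb R\setminus\Omega$ of width $\delta_b$. Hence $\widetilde v\in H^p(\mathbb R)$ with $\|\widetilde v\|_{H^p(\mathbb R)}=\|v\|_{H^p(\Omega)}$. No extension operator is needed, because all derivatives match across the endpoints, where $v$ is identically zero.

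\emph{Local quasi-interpolant.} I would not use a global best approximation, since it does not localize. Instead, for $k\in\mathbb Z$ I take a de Boor--Fix (or dual-functional) quasi-interpolant
\[
Q_hu=\sum_k \lambda_k(u)\phi_{h,k},
\]
where each dual functional $\lambda_k$ depends only on $u$ restricted to $\operatorname{supp}\phi_{h,k}$. This operator reproduces polynomials of degree $<p$ and satisfies the local bounds
\[
|u-Q_hu|_{H^q(I)}\le Ch^{p-q}|u|_{H^p(\widetilde I)}, \qquad 0\le q\le p,
\]
on each mesh cell $I$, with $\widetilde I$ the union of the $p$ neighbouring cells. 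When $q$ is noninteger these bounds interpolate, and summing the localized seminorms gives the global estimate with a constant depending only on $p$ and the quasi-uniformity ratio. This is the standard proof underlying \eqref{eq:bspline_approximation_order} on a uniform grid (Schumaker, or de Boor's book), so I will cite it rather than rederive it.

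\emph{Support bookkeeping.} If $\operatorname{supp}\phi_{h,k}\subset\Omega_b\cup(\mathbb R\setminus\Omega)$, then $\lambda_k(\widetilde v)=0$ because $\widetilde v$ vanishes there. Consequently any nonzero coefficient belongs to a spline whose support meets $\Omega\setminus\Omega_b$. Such a support has length $ph$, and $ph\le\delta_b$, so it lies inside $[x_{\min},x_{\max}]$. Thus $Q_h\widetilde v\in V_h^0$, and it is supported in $\overline\Omega\setminus\Omega_b^{\circ}$ enlarged by at most a layer of width $ph$. This gives the final clause of the statement. Restricting to $\Omega$ then yields
\[
\|v-Q_h\widetilde v\|_{H^q(\Omega)}\le\|\widetilde v-Q_h\widetilde v\|_{H^q(\mathbb R)}\le Ch^{p-q}\|v\|_{H^p(\Omega)}.
\]

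\emph{Main obstacle.} The delicate step is the fractional case $q\notin\mathbb Z$, in particular $q=s$ together with the $H^q(\Omega)$ versus zero-extension norm issue. My plan is to estimate the full-line $H^q$ norm of the error $\widetilde v-Q_h\widetilde v$, which dominates the $H^q(\Omega)$ norm. I would do this by interpolating between the integer-order estimates for $q=\lfloor q\rfloor$ and $q=\lceil q\rceil\le p$. Because $Q_h$ is a fixed linear operator satisfying $\|(I-Q_h)u\|_{H^m}\le Ch^{p-m}\|u\|_{H^p}$ for integer $m$, operator interpolation applied to $I-Q_h$ as a map $H^p(\mathbb R)\to H^m(\mathbb R)$ gives the intermediate orders. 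One caveat is that for $q>p-1/2$ the splines are only $C^{p-2}$, so the bound is restricted to $q<p-1/2$. I would note this restriction, which is harmless since only $q\le s<1$ and $q=0$ are used later.
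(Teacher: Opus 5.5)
Your proposal is correct and follows the paper's route, which also rests on the locality of the standard cardinal B-spline quasi-interpolant: zero extension of $v$, vanishing of the dual functionals on splines supported in $\Omega_b\cup(\mathbb R\setminus\Omega)$, and $ph\le\delta_b$ to land in $V_h^0$ with the stated support; as a bonus, your bound on $\|\widetilde v-Q_h\widetilde v\|_{H^q(\mathbb R)}$ controls the zero-extension norm directly. One small correction: interpolating $I-Q_h$ between integer orders reaches only $q\le p-1$, since $I-Q_h$ does not map $H^p(\mathbb R)$ into $H^p(\mathbb R)$, so your argument does not cover $p-1<q<p-\tfrac12$ (and, as you note, the statement's $q\le p$ is overstated); this is immaterial because only $q\in[0,1]\subset[0,p-1]$ is used.
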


The proof, via the locality of the standard cardinal B-spline
quasi-interpolant, is given in the supplementary materials (section~SM1).

Because $v$ and its approximant differ only on a compact subset of
$\Omega$ at distance at least $\delta_b-ph>0$ from $\partial\Omega$, the
zero extension of the error is controlled by the interior norm:
$\|\widetilde{v-w_h}\|_{H^q(\mathbb R)}\le C\|v-w_h\|_{H^q(\Omega)}$ for
$0\le q\le1$, with $C$ independent of $h$.  Moreover, for fixed $M,G>0$ the
weighted tempered norm \eqref{eq:HMG_fourier_norm} is equivalent to the
ordinary fractional Sobolev norm on $\mathbb R$, up to constants depending on
$M$, $G$, and $Y$ but not on $h$.  Thus, for $0\le q\le p$,
\begin{equation}
\label{eq:tempered_standard_equiv_omega}
\inf_{w_h\in V_h}\norm{v-w_h}_{H^q_{M,G}(\Omega)}
\le C h^{p-q}\norm{v}_{H^p(\Omega)} .
\end{equation}
In particular, with $q=s=Y/2$,
\begin{equation}
\label{eq:bspline_hs_tempered_rate}
\inf_{w_h\in V_h}\norm{v-w_h}_{H^{Y/2}_{M,G}(\Omega)}
\le C h^{p-Y/2}\norm{v}_{H^p(\Omega)} .
\end{equation}

\subsection{Shifted Elliptic Projection}
\label{ssec:shifted_projection}

The stationary C\'ea estimate is not the convergence theorem for the
parabolic pricing scheme.  It is instead used as a projection estimate inside
the parabolic error argument.

\begin{definition}[Shifted elliptic projection]
\label{def:shifted_projection}
Let $\sigma$ be an admissible shift (Remark~\ref{rem:r_zero}), where $c_2$ is
the finite-domain G{\aa}rding constant of
Theorem~\ref{ass:finite_domain_garding}.  For $v\in H^{Y/2}_{M,G}(\Omega)$, define
$R_h^\sigma v\in V_h$ by
\begin{equation}
\label{eq:shifted_projection_def}
a_\sigma(R_h^\sigma v,w_h)=a_\sigma(v,w_h)
\qquad \forall w_h\in V_h .
\end{equation}
\end{definition}

\begin{theorem}[Shifted elliptic projection estimate]
\label{thm:shifted_projection}
Let $Y\in(1,2)$, $s=Y/2$, and let $\sigma$ be an admissible shift.  If
$v\in H^p(\Omega)\cap H^s_{M,G}(\Omega)$ vanishes on the buffer strips
$\Omega_b$, then
\begin{equation}
\label{eq:projection_error_hs}
\norm{v-R_h^\sigma v}_{H^s_{M,G}(\Omega)}
\le C h^{p-s}\norm{v}_{H^p(\Omega)},
\end{equation}
where $C$ is independent of $h$.
\end{theorem}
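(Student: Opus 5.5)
The estimate is a Céa-type bound: combine coercivity of the shifted form with Galerkin orthogonality, then insert the Jackson estimate. Concretely, I will prove
\[
\|v-R_h^\sigma v\|_{H^s_{M,G}(\Omega)}\le \frac{C_{\rm cont}+\sigma}{C_{\rm coer}(\sigma)}\inf_{w_h\in V_h}\|v-w_h\|_{H^s_{M,G}(\Omega)},
\]
and then apply \eqref{eq:bspline_hs_tempered_rate}.

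\textbf{Step 1: $R_h^\sigma$ is well defined.} By Lemma~\ref{lem:conformity}, $V_h\subset H^s_{M,G}(\Omega)$. Theorem~\ref{ass:finite_domain_garding} then gives, for any admissible $\sigma$, continuity of $a_\sigma$ on $V_h\times V_h$ and coercivity \eqref{eq:finite_domain_shifted_coercivity}. The continuity constant is at most $C_{\rm cont}+\sigma$, since $\|u\|_{L^2(\Omega)}\le\|u\|_{H^s_{M,G}(\Omega)}$ by the definition \eqref{eq:HMG_fourier_norm}. Because $V_h$ is finite dimensional and the problem is coercive, the linear system \eqref{eq:shifted_projection_def} has a unique solution (Lax--Milgram, or simply injectivity). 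The right-hand side $a_\sigma(v,\cdot)$ is a bounded functional because $v\in H^s_{M,G}(\Omega)$.

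\textbf{Step 2: Céa's lemma.} Set $e=v-R_h^\sigma v$. Galerkin orthogonality gives $a_\sigma(e,w_h)=0$ for all $w_h\in V_h$. For arbitrary $w_h\in V_h$ we have $R_h^\sigma v-w_h\in V_h$, so
\[
C_{\rm coer}(\sigma)\|e\|^2\le \operatorname{Re}a_\sigma(e,e)=\operatorname{Re}a_\sigma(e,v-w_h)\le (C_{\rm cont}+\sigma)\|e\|\,\|v-w_h\|,
\]
with all norms in $H^s_{M,G}(\Omega)$. Dividing by $\|e\|$ gives the quasi-optimality bound above. Its constant depends only on the CGMY parameters, $r$ and $\sigma$; it does not depend on $h$ or $\Omega$, because the constants of Theorem~\ref{ass:finite_domain_garding} are uniform.

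\textbf{Step 3: approximation.} By hypothesis $v\in H^p(\Omega)$ vanishes on $\Omega_b$, so Lemma~\ref{lem:jackson} applies with $q=s$ (for $h\le\delta_b/p$). The infimum is realized by splines that agree with $v$ near $\partial\Omega$. The zero-extension transfer stated after Lemma~\ref{lem:jackson}, together with the norm equivalence, yields \eqref{eq:bspline_hs_tempered_rate}: $\inf_{w_h}\|v-w_h\|_{H^s_{M,G}(\Omega)}\le Ch^{p-s}\|v\|_{H^p(\Omega)}$. Combining this with Step 2 gives \eqref{eq:projection_error_hs}. For $h>\delta_b/p$ the estimate follows trivially, with an $h$-independent constant, from stability of $R_h^\sigma$: $\|R_h^\sigma v\|\le\frac{C_{\rm cont}+\sigma}{C_{\rm coer}(\sigma)}\|v\|$ and $\|v\|_{H^s_{M,G}}\le C\|v\|_{H^p}$.

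\textbf{Main obstacle.} The delicate point is not the Céa argument but ensuring the approximation error is measured in the zero-extension norm. One must check that the error $v-w_h$ from the quasi-interpolant vanishes in a neighbourhood of $\partial\Omega$, so that the extension by zero of an $H^s(\Omega)$ function is controlled without a Hardy-type boundary term. This is exactly what the buffer assumption and the support statement in Lemma~\ref{lem:jackson} provide: for the fractional range $s\in(1/2,1)$, the bound $\|\widetilde{v-w_h}\|_{H^s(\mathbb R)}\le C\|v-w_h\|_{H^s(\Omega)}$ holds for functions supported at a fixed distance from the boundary. I will rely on this as stated in the paper.
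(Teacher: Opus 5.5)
Your proposal is correct and follows the paper's own argument: Galerkin orthogonality, C\'ea's lemma with the $h$-independent constants of Theorem~\ref{ass:finite_domain_garding}, and the approximation estimate \eqref{eq:bspline_hs_tempered_rate}. Your explicit handling of the coarse-mesh case $h>\delta_b/p$ through the stability of $R_h^\sigma$ is a small addition that the paper leaves implicit.
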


The proof --- Galerkin orthogonality, C\'ea's lemma with the
constants of Theorem~\ref{ass:finite_domain_garding}, and the approximation
estimate \eqref{eq:bspline_hs_tempered_rate} --- is given in the
supplementary materials (section~SM1).

\subsection{Semidiscrete Parabolic Error Estimate}
\label{ssec:semidiscrete_error}

Let $V$ be the solution of the truncated forced weak problem
\begin{equation}
\label{eq:continuous_forced_problem_analysis}
(\partial_\tau V,w)_{L^2(\Omega)}+a(V,w)=\ell_{\rm bc}(w;\tau),
\qquad w\in H^s_{M,G}(\Omega),
\end{equation}
and let $V_h(\tau)\in V_h$ solve \eqref{eq:analysis_forced_weak}.  The forcing
term cancels in the error equation because the same boundary lifting and
exterior extension are used in the continuous and semidiscrete problems.

\begin{theorem}[Semidiscrete smooth-regime error estimate]
\label{thm:semidiscrete_error}
Let $Y\in(1,2)$, $s=Y/2$, and let $\sigma$ be an admissible shift.  Assume
Assumption~\ref{ass:buffer_lifting} and
\begin{equation}
\label{eq:semidiscrete_regularity_assumptions}
V\in L^\infty(0,T;H^p(\Omega)),
\qquad
\partial_\tau V\in L^2(0,T;H^{p}(\Omega)).
\end{equation}
Then, for $0\le \tau\le T$,
\begin{align}
\label{eq:semidiscrete_error_bound}
&\norm{V(\tau)-V_h(\tau)}_{L^2(\Omega)}^2
+\int_0^\tau \norm{V(t)-V_h(t)}_{H^s_{M,G}(\Omega)}^2\,dt \\
&\qquad\le
C\Bigl(
\norm{V(0)-V_h(0)}_{L^2(\Omega)}^2
+h^{2(p-s)}\mathcal R(V)
\Bigr),
\end{align}
where
\begin{equation}
\label{eq:R_regular_quantity}
\mathcal R(V)
:=
\norm{V}_{L^\infty(0,T;H^p(\Omega))}^2
+
\norm{\partial_\tau V}_{L^2(0,T;H^{p}(\Omega))}^2 .
\end{equation}
If the initial data are projected so that
$\norm{V(0)-V_h(0)}_{L^2(\Omega)}\le C h^{p-s}\norm{V(0)}_{H^p(\Omega)}$,
then the right-hand side is $O(h^{2(p-s)})$.
\end{theorem}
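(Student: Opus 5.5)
We follow the standard Wheeler splitting with the shifted elliptic projection of Definition~\ref{def:shifted_projection}. Write
\[
V-V_h=\rho+\theta,\qquad \rho:=V-R_h^\sigma V,\qquad \theta:=R_h^\sigma V-V_h\in V_h .
\]
By Assumption~\ref{ass:buffer_lifting}, $V(\tau)$ and $\partial_\tau V(\tau)$ vanish on $\Omega_b$. Theorem~\ref{thm:shifted_projection} therefore applies at each time and gives
\[
\|\rho(\tau)\|_{H^s_{M,G}(\Omega)}\le Ch^{p-s}\|V(\tau)\|_{H^p(\Omega)}.
\]
The projection $R_h^\sigma$ is time-independent and linear, so it commutes with $\partial_\tau$. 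Applying Theorem~\ref{thm:shifted_projection} to $\partial_\tau V$ yields
\[
\|\partial_\tau\rho\|_{L^2(\Omega)}\le\|\partial_\tau\rho\|_{H^s_{M,G}(\Omega)}\le Ch^{p-s}\|\partial_\tau V\|_{H^p(\Omega)}.
\]
Squaring and integrating in time bounds the $\rho$ contributions by $h^{2(p-s)}\mathcal R(V)$. We do not attempt an Aubin--Nitsche duality argument, because the target rate is only $h^{p-s}$ in $L^2$.

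For $\theta$, subtract \eqref{eq:analysis_forced_weak} from \eqref{eq:continuous_forced_problem_analysis} with $w=w_h\in V_h$. This test function is admissible by Lemma~\ref{lem:conformity}, which also gives $a_h=a$ on $V_h$. The forcing $\ell_{\rm bc}$ cancels, leaving Galerkin orthogonality:
\[
(\partial_\tau(V-V_h),w_h)+a(V-V_h,w_h)=0 .
\]
Add and subtract $\sigma(V-V_h,w_h)$ and use $a_\sigma(\rho,w_h)=0$. This gives the error equation
\[
(\partial_\tau\theta,w_h)+a_\sigma(\theta,w_h)=-(\partial_\tau\rho,w_h)+\sigma(\rho+\theta,w_h).
\]
Test with $w_h=\theta$ and take real parts, so that the left side becomes $\tfrac12\frac{d}{d\tau}\|\theta\|^2_{L^2}+\operatorname{Re}a_\sigma(\theta,\theta)$. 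Theorem~\ref{ass:finite_domain_garding} bounds $\operatorname{Re}a_\sigma(\theta,\theta)$ below by $C_{\rm coer}(\sigma)\|\theta\|^2_{H^s_{M,G}}$. On the right, Cauchy--Schwarz and Young give the bound $\tfrac12\|\partial_\tau\rho\|^2+(\tfrac12+\tfrac{3\sigma}{2})\|\theta\|^2+\tfrac{\sigma}{2}\|\rho\|^2$. When $r>0$ we have $\sigma=0$ and the $\sigma$-terms vanish. Grönwall's inequality on $[0,T]$ then yields
\[
\|\theta(\tau)\|^2_{L^2}+\int_0^\tau\|\theta\|^2_{H^s_{M,G}}\,dt\le C_T\Bigl(\|\theta(0)\|^2_{L^2}+\int_0^T\bigl(\|\partial_\tau\rho\|^2+\|\rho\|^2\bigr)dt\Bigr).
\]
The integral on the right is $O(h^{2(p-s)}\mathcal R(V))$, using $L^\infty\subset L^2$ in time over the finite interval.

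To finish, use
\[
\|\theta(0)\|_{L^2}\le\|V(0)-V_h(0)\|_{L^2}+\|\rho(0)\|_{L^2}\le\|V(0)-V_h(0)\|_{L^2}+Ch^{p-s}\|V\|_{L^\infty(0,T;H^p)} .
\]
Then combine $\theta$ and $\rho$ by the triangle inequality in both norms: the $L^\infty$-in-time bound on $\rho$ handles the $L^2$ term, and the $L^2(0,\tau;H^s_{M,G})$ bound on $\rho$ handles the energy term. This gives \eqref{eq:semidiscrete_error_bound}, and the final sentence of the theorem follows by substituting the assumed initial projection bound.

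The main obstacle is justifying that the shifted projection commutes with $\partial_\tau$ and applies to $\partial_\tau V$, which requires $\partial_\tau V(\tau)\in H^p\cap H^s_{M,G}$ vanishing on $\Omega_b$. This is exactly where the buffer assumption and the regularity assumption \eqref{eq:semidiscrete_regularity_assumptions} enter. I would also check that the Grönwall constant depends only on $T$, $\sigma$ and $C_{\rm coer}(\sigma)$, so that it is independent of $h$.
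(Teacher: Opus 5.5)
Your proposal is correct and follows essentially the same route as the paper. That route is the splitting $V-V_h=(V-R_h^\sigma V)+(R_h^\sigma V-V_h)$ with the shifted elliptic projection, testing the $\theta$-equation with $\theta$, G{\aa}rding/coercivity together with Young and Gr\"onwall, and the time-independence of $R_h^\sigma$ to apply Theorem~\ref{thm:shifted_projection} to $\partial_\tau V$ under the buffer assumption; your explicit error equation $(\partial_\tau\theta,w_h)+a_\sigma(\theta,w_h)=-(\partial_\tau\rho,w_h)+\sigma(\rho+\theta,w_h)$ and the final triangle-inequality step just spell out details the paper leaves to its supplement.
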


\begin{proof}[Proof sketch]
Split the error as $e=\eta+\theta$ with $\eta:=V-R_h^\sigma V$ and
$\theta:=R_h^\sigma V-V_h$.  Testing the $\theta$-equation with $\theta$
itself, the G{\aa}rding inequality, Young's inequality, and Gronwall's lemma
control $\theta$ by $\eta$ and $\eta_t$ in $L^2$.  Since $R_h^\sigma$ is time
independent, $\eta_t$ is exactly the shifted elliptic projection error of
$\partial_\tau V$, so Theorem~\ref{thm:shifted_projection} applies to $V$ and
to $\partial_\tau V$ --- this is where the strengthened regularity
$\partial_\tau V\in L^2(0,T;H^{p}(\Omega))$ in
\eqref{eq:semidiscrete_regularity_assumptions} enters --- and yields
\eqref{eq:semidiscrete_error_bound}.  The complete proof is given in the
supplementary materials (section~SM1).
\end{proof}

\subsection{Stability of the Forced Crank--Nicolson Scheme}
\label{ssec:stability}

A G\aa rding inequality with $c_2>0$ does not by itself imply unconditional
energy decay for the unshifted Crank--Nicolson discretisation.  We therefore
analyze the shifted variable
\begin{equation}
\label{eq:shifted_time_variable}
W(\tau):=e^{-\sigma\tau}V(\tau),
\qquad \sigma\text{ admissible (Remark~\ref{rem:r_zero})},
\end{equation}
which covers both cases at once: when $c_2=0$ --- in particular whenever
$r>0$, by Theorem~\ref{ass:finite_domain_garding} --- the admissible choice
$\sigma=0$ reduces $W$ to $V$ and the estimates below apply verbatim to the
plain scheme \eqref{eq:cn}.
For the shifted semidiscrete problem the bilinear form is
\begin{equation}
\label{eq:discrete_shifted_form}
a_{h,\sigma}(u_h,w_h):=a_h(u_h,w_h)+\sigma(u_h,w_h)_{L^2(\Omega)} .
\end{equation}
If $\mathbf d^n$ denotes the coefficient vector of $W_h^n$, then the shifted
forced Crank--Nicolson scheme is
\begin{equation}
\label{eq:shifted_cn_scheme}
M_h(\mathbf d^{n+1}-\mathbf d^n)
+\Delta\tau(A_h+\sigma M_h)\bar{\mathbf d}^{n+1/2}
=\Delta\tau\bar{\mathbf g}^{n+1/2},
\end{equation}
where
\begin{equation}
\bar{\mathbf d}^{n+1/2}:=\frac{\mathbf d^{n+1}+\mathbf d^n}{2},
\qquad
\mathbf g^n:=e^{-\sigma\tau_n}\mathbf f_{\rm bc}^n,
\qquad
\bar{\mathbf g}^{n+1/2}:=\frac{\mathbf g^{n+1}+\mathbf g^n}{2} .
\end{equation}

\begin{lemma}[Uniform discrete shifted coercivity]
\label{lem:discrete_shifted_coercivity}
Assume $Y\in(1,2)$ and let $\sigma$ be an admissible shift.  Then for every
$z_h\in V_h$,
\begin{equation}
\label{eq:discrete_shifted_coercivity}
\operatorname{Re}a_{h,\sigma}(z_h,z_h)
\ge C_{\rm coer}(\sigma)\|z_h\|_{H^s_{M,G}(\Omega)}^2,
\end{equation}
where $C_{\rm coer}(\sigma)>0$ is independent of $h$; when $c_2=0$ and
$\sigma=0$, one may take $C_{\rm coer}(0)=c_1$.
\end{lemma}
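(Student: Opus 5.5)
The plan is to reduce the discrete statement to the continuous finite-domain coercivity of Theorem~\ref{ass:finite_domain_garding}, using the conformity and exactness results of Lemma~\ref{lem:conformity}. No new analysis is needed: the discrete form is just the continuous form restricted to a conforming subspace. The resulting constant is independent of $h$ because it never involves the mesh.

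First, fix $z_h\in V_h=V_h^0$. By Lemma~\ref{lem:conformity}, $z_h\in H^s_{M,G}(\Omega)$, and $z_h$ coincides with its zero extension $\widetilde z_h$. The same lemma gives
\[
a_h(z_h,z_h)=a_\Omega(z_h,z_h)=a_{\mathbb R}(\widetilde z_h,\widetilde z_h).
\]
Adding $\sigma\|z_h\|_{L^2(\Omega)}^2$ to both sides yields $a_{h,\sigma}(z_h,z_h)=a_{\Omega,\sigma}(z_h,z_h)$. The one point to check is that the $L^2(\Omega)$ inner product in \eqref{eq:ah_def} really captures the whole duality pairing $\langle\mathcal A\widetilde z_h,\widetilde z_h\rangle$. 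This holds because $z_h$ vanishes outside $\overline\Omega$, so the test function kills all exterior contributions of the nonlocal operator.

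Next, apply \eqref{eq:finite_domain_shifted_coercivity} from Theorem~\ref{ass:finite_domain_garding} to $u=z_h$. This gives
\[
\operatorname{Re}a_{h,\sigma}(z_h,z_h)\ge C_{\rm coer}(\sigma)\|z_h\|_{H^s_{M,G}(\Omega)}^2 .
\]
Here $C_{\rm coer}(\sigma)$ is the constant of Theorem~\ref{thm:wellposed}, which depends only on the CGMY parameters, $r$, and $\sigma$. For the case $\sigma>c_2$, one may take explicitly $C_{\rm coer}(\sigma)=\min\{c_1,\,c_1(\sigma-c_2)/(c_1+\sigma)\}$ or a similar positive quantity. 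This comes from the elementary estimate $\|u\|_{L^2}\le\|u\|_{H^s_{M,G}}$: adding $\sigma\|u\|^2_{L^2}$ to the Gårding inequality gives at least $c_1\|u\|^2_{H^s_{M,G}}-(c_2-\sigma)^+\|u\|^2_{L^2}$, which is bounded below by $c_1\|u\|^2_{H^s_{M,G}}$ whenever $\sigma\ge c_2$.

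When $r>0$, Theorem~\ref{thm:wellposed} gives $c_2=0$, so $\sigma=0$ is admissible and $C_{\rm coer}(0)=c_1$. Independence of $h$ is then immediate, because none of these constants refers to the mesh.

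The only potentially delicate step is the identification $a_h=a_\Omega$ on $V_h$, that is, that no boundary or exterior terms are lost. This is exactly the content of Lemma~\ref{lem:conformity}, so it can be taken as given. Everything else is a direct restriction of an inequality that holds on all of $H^s_{M,G}(\Omega)$ to the subspace $V_h$.
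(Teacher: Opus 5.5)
Your proof is correct and matches the paper's: you use Lemma~\ref{lem:conformity} to identify $a_{h,\sigma}=a_{\Omega,\sigma}$ on $V_h$, then apply the finite-domain G{\aa}rding/coercivity estimate of Theorem~\ref{ass:finite_domain_garding}, so $h$-independence comes from the continuous constants. One small point: your own computation already gives $\operatorname{Re}a_{h,\sigma}(z_h,z_h)\ge c_1\|z_h\|^2_{H^s_{M,G}(\Omega)}$ for every $\sigma\ge c_2$, so your explicit formula $\min\{c_1,c_1(\sigma-c_2)/(c_1+\sigma)\}$ and the paper's $\min\{c_1,\sigma-c_2\}$ are both valid but unnecessarily weak.
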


\begin{proof}
By Lemma~\ref{lem:conformity}, interior splines coincide with their zero
extensions, $V_h\subset H^s_{M,G}(\Omega)$, and
$a_h(z_h,z_h)=a_\Omega(z_h,z_h)$ identically, so the G{\aa}rding inequality
\eqref{eq:finite_domain_garding} applies to $z_h$ directly.  If $c_2=0$ this
is already \eqref{eq:discrete_shifted_coercivity} with
$C_{\rm coer}(\sigma)\ge c_1$; if $\sigma>c_2>0$, adding
$\sigma\|z_h\|_{L^2}^2$ gives the claim with
$C_{\rm coer}(\sigma)=\min\{c_1,\sigma-c_2\}$.  In both cases the constant is
inherited from the continuous inequality and is independent of $h$.
\end{proof}

\begin{theorem}[Forced shifted Crank--Nicolson stability]
\label{thm:stability}
Let $Y\in(1,2)$ and let $\sigma$ be an admissible shift.  Define
\begin{equation}
E_W^n:=(\mathbf d^n)^\top M_h\mathbf d^n=\|W_h^n\|_{L^2(\Omega)}^2 .
\end{equation}
Then the solution of \eqref{eq:shifted_cn_scheme} satisfies
\begin{equation}
\label{eq:forced_shifted_cn_stability}
E_W^n
+\Delta\tau\sum_{m=0}^{n-1}
\|\bar W_h^{m+1/2}\|_{H^s_{M,G}(\Omega)}^2
\le
C\left(
E_W^0
+\Delta\tau\sum_{m=0}^{n-1}
\|\bar{\mathbf g}^{m+1/2}\|_{*,h}^2
\right),
\end{equation}
where
\begin{equation}
\label{eq:discrete_dual_norm}
\|\mathbf q\|_{*,h}
:=
\sup_{0\ne z_h\in V_h}
\frac{\mathbf q^\top\mathbf z}{\|z_h\|_{H^s_{M,G}(\Omega)}} .
\end{equation}
The constant $C$ depends on $C_{\rm coer}(\sigma)$, but is independent of
$h$ and $\Delta\tau$.  Consequently, for the original unshifted unknown
$V_h^n=e^{\sigma\tau_n}W_h^n$,
\begin{equation}
\label{eq:unshifted_controlled_growth}
E_V^n
\le
C e^{2\sigma T}
\left(
E_V^0
+\Delta\tau\sum_{m=0}^{n-1}
\|e^{-\sigma\tau_{m+1/2}}\bar{\mathbf f}_{\rm bc}^{m+1/2}\|_{*,h}^2
\right).
\end{equation}
\end{theorem}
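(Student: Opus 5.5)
We take the standard energy approach for Crank--Nicolson: test the scheme \eqref{eq:shifted_cn_scheme} with the midpoint vector $\bar{\mathbf d}^{m+1/2}$ and take real parts. Since $M_h$ is symmetric positive definite,
\[
\operatorname{Re}(\bar{\mathbf d}^{m+1/2})^{*}M_h(\mathbf d^{m+1}-\mathbf d^m)=\tfrac12\bigl(E_W^{m+1}-E_W^m\bigr).
\]
This holds because the cross terms $\mathbf d^{m+1\,*}M_h\mathbf d^m-\mathbf d^{m*}M_h\mathbf d^{m+1}$ are purely imaginary. With real coefficients, as in the implemented scheme, it is simply the difference of squares. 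The second term satisfies
\[
(\bar{\mathbf d}^{m+1/2})^{*}(A_h+\sigma M_h)\bar{\mathbf d}^{m+1/2}=a_{h,\sigma}(\bar W_h^{m+1/2},\bar W_h^{m+1/2}),
\]
because $A_h$ is the Galerkin matrix of $a_h$ on $V_h=V_h^0$ by \eqref{eq:Ah_def} and Lemma~\ref{lem:conformity}. Lemma~\ref{lem:discrete_shifted_coercivity} then bounds its real part from below by $C_{\rm coer}(\sigma)\|\bar W_h^{m+1/2}\|_{H^s_{M,G}(\Omega)}^2$, with a constant independent of $h$.

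For the forcing term we use the discrete dual norm \eqref{eq:discrete_dual_norm} directly:
\[
|\bar{\mathbf g}^{m+1/2\,\top}\bar{\mathbf d}^{m+1/2}|\le\|\bar{\mathbf g}^{m+1/2}\|_{*,h}\,\|\bar W_h^{m+1/2}\|_{H^s_{M,G}(\Omega)} .
\]
Young's inequality with weight $C_{\rm coer}(\sigma)/2$ absorbs half of the coercive term. This leaves
\[
E_W^{m+1}-E_W^m+C_{\rm coer}\Delta\tau\|\bar W_h^{m+1/2}\|^2_{H^s_{M,G}}\le \frac{\Delta\tau}{C_{\rm coer}}\|\bar{\mathbf g}^{m+1/2}\|_{*,h}^2 .
\]
Summing from $m=0$ to $n-1$ telescopes the energies and gives \eqref{eq:forced_shifted_cn_stability} with $C=\max\{1,C_{\rm coer}^{-1}\}\cdot\max\{1,C_{\rm coer}^{-1}\}$. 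The constant depends only on $C_{\rm coer}(\sigma)$. No Gronwall step and no CFL-type restriction are needed, because the coercivity is full rather than of G{\aa}rding type after the shift; hence the bound is independent of $\Delta\tau$ and $h$.

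For \eqref{eq:unshifted_controlled_growth}, we substitute $V_h^n=e^{\sigma\tau_n}W_h^n$, so that $E_V^n=e^{2\sigma\tau_n}E_W^n\le e^{2\sigma T}E_W^n$ and $E_W^0=E_V^0$. We then drop the nonnegative dissipation sum. The forcing averages $\bar{\mathbf g}^{m+1/2}=\tfrac12(e^{-\sigma\tau_{m+1}}\mathbf f^{m+1}_{\rm bc}+e^{-\sigma\tau_m}\mathbf f^m_{\rm bc})$ are identified with $e^{-\sigma\tau_{m+1/2}}\bar{\mathbf f}^{m+1/2}_{\rm bc}$ up to factors $e^{\pm\sigma\Delta\tau/2}\le e^{\sigma T}$, which are absorbed into $C$. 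The alternative is to read that expression in the statement as notation for $\bar{\mathbf g}^{m+1/2}$.

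The main point to check carefully is the first identity: that the $M_h$-term telescopes exactly and that the quadratic form of $A_h$ is precisely $a_h$ on the homogeneous space, with no boundary rows. Both depend on Lemma~\ref{lem:conformity}, and the latter is what makes the coercivity constant mesh-independent. Everything else is Young's inequality and summation.
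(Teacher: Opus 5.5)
Your proof of \eqref{eq:forced_shifted_cn_stability} follows the paper's argument step for step. You test \eqref{eq:shifted_cn_scheme} with the midpoint vector, use the exact telescoping energy identity, and bound $\operatorname{Re}a_{h,\sigma}$ below by Lemma~\ref{lem:discrete_shifted_coercivity}. You then estimate the forcing in the dual norm \eqref{eq:discrete_dual_norm}, absorb it by Young's inequality, and sum. Your constant $\max\{1,C_{\rm coer}^{-1}\}^2$ is correct. The unshifting via $E_V^n=e^{2\sigma\tau_n}E_W^n\le e^{2\sigma T}E_W^n$ and $E_W^0=E_V^0$ is also what the paper does.

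The one step that fails is your first treatment of the forcing in \eqref{eq:unshifted_controlled_growth}. Writing
$\bar{\mathbf g}^{m+1/2}=e^{-\sigma\tau_{m+1/2}}\tfrac12\bigl(e^{-\sigma\Delta\tau/2}\mathbf f_{\rm bc}^{m+1}+e^{\sigma\Delta\tau/2}\mathbf f_{\rm bc}^{m}\bigr)$
shows that it weights the two endpoint values differently from $e^{-\sigma\tau_{m+1/2}}\bar{\mathbf f}_{\rm bc}^{m+1/2}$. So no scalar factor relates their dual norms, and the factors $e^{\pm\sigma\Delta\tau/2}$ cannot be ``absorbed into $C$.''

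For a concrete counterexample, take $\mathbf f_{\rm bc}^m=(-1)^m\mathbf v$ with $\mathbf v\ne0$ and $\mathbf d^0=0$.
\begin{itemize}
\item Then $\bar{\mathbf f}_{\rm bc}^{m+1/2}=0$ for every $m$.
\item For $\sigma>0$, however, $\bar{\mathbf g}^{1/2}=\tfrac12(1-e^{-\sigma\Delta\tau})\mathbf v\ne0$, so the scheme gives $\mathbf d^1\ne0$ and $E_V^1>0$.
\item With the plain average, the right-hand side of \eqref{eq:unshifted_controlled_growth} would vanish.
\end{itemize}

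Your second reading, in which $e^{-\sigma\tau_{m+1/2}}\bar{\mathbf f}_{\rm bc}^{m+1/2}$ denotes $\bar{\mathbf g}^{m+1/2}$, is the correct one. It is exactly what the paper's one-line deduction from $V_h^n=e^{\sigma\tau_n}W_h^n$ uses. Alternatively, a termwise bound gives a valid estimate in terms of $\|\mathbf f_{\rm bc}^{m+1}\|_{*,h}+\|\mathbf f_{\rm bc}^{m}\|_{*,h}$. In the operative case $\sigma=0$ the two expressions coincide and the issue disappears.
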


\begin{proof}[Proof sketch]
Testing \eqref{eq:shifted_cn_scheme} with the midpoint value
$2\bar{\mathbf d}^{n+1/2}$ produces the exact discrete energy identity
\begin{equation*}
E_W^{n+1}-E_W^n
+2\Delta\tau\operatorname{Re}a_{h,\sigma}(\bar W_h^{n+1/2},\bar W_h^{n+1/2})
=2\Delta\tau(\bar{\mathbf g}^{n+1/2})^\top\bar{\mathbf d}^{n+1/2};
\end{equation*}
the skew part of the nonselfadjoint form is invisible after taking real
parts.  Lemma~\ref{lem:discrete_shifted_coercivity} bounds the coercive term
from below, the forcing is estimated in the dual norm
\eqref{eq:discrete_dual_norm} and absorbed by Young's inequality, and summing
over the time steps gives \eqref{eq:forced_shifted_cn_stability};
\eqref{eq:unshifted_controlled_growth} follows from
$V_h^n=e^{\sigma\tau_n}W_h^n$ with $\tau_n\le T$.  The complete proof is
given in the supplementary materials (section~SM1).
\end{proof}

\begin{corollary}[Homogeneous case]
\label{cor:homogeneous_stability}
If $\mathbf f_{\rm bc}=0$, then the shifted discrete energy is nonincreasing
up to the coercive dissipation term in
\eqref{eq:forced_shifted_cn_stability}.  In the homogeneous full-line setting,
additional symmetry of the CGMY tails may imply energy dissipation for the
unshifted operator.  On the truncated forced problem considered here, however,
the stability estimate necessarily contains the boundary-forcing contribution.
\end{corollary}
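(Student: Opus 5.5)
The corollary should follow from Theorem~\ref{thm:stability}, specialised to $\mathbf f_{\rm bc}=0$, using the exact discrete energy identity rather than the final summed bound. With $\mathbf g^n=e^{-\sigma\tau_n}\mathbf f_{\rm bc}^n=0$ for all $n$, the right-hand side of \eqref{eq:shifted_cn_scheme} vanishes. First I will take the real part of the scheme tested against $2\bar{\mathbf d}^{n+1/2}$. Because $M_h$ is real symmetric, $2\operatorname{Re}(\mathbf d^{n+1}-\mathbf d^n)^\top M_h(\mathbf d^{n+1}+\mathbf d^n)/2=E_W^{n+1}-E_W^n$. The skew part of $A_h$ drops out of the real part, which gives
\begin{equation*}
E_W^{n+1}-E_W^n+2\Delta\tau\operatorname{Re}a_{h,\sigma}(\bar W_h^{n+1/2},\bar W_h^{n+1/2})=0 .
\end{equation*}

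Next I will apply Lemma~\ref{lem:discrete_shifted_coercivity} to the dissipation term. This gives
\begin{equation*}
E_W^{n+1}+2\Delta\tau C_{\rm coer}(\sigma)\|\bar W_h^{n+1/2}\|_{H^s_{M,G}(\Omega)}^2\le E_W^n .
\end{equation*}
So the shifted energy is nonincreasing, and the decrease is at least the coercive dissipation. Summing over the steps recovers \eqref{eq:forced_shifted_cn_stability} with the forcing sum absent. In the $\sigma=0$ case, which is admissible for $r>0$ by Theorem~\ref{ass:finite_domain_garding}, this is a statement about the plain scheme \eqref{eq:cn} itself.

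The remaining two sentences are qualitative remarks, and I will justify them only briefly. In the full-line homogeneous setting, the real part of the symbol satisfies $\operatorname{Re}A(\xi)\ge r\ge 0$ by \eqref{eq:symbol_real_part_exact} and Lemma~\ref{lem:symbol_monotone}, so unshifted dissipation holds there. For the truncated forced problem, the forcing term $\ell_{\rm bc}$ from \eqref{eq:bc_functional} is generally nonzero, because the lifting $g$ carries the far-field value $Ke^{-r\tau}$. It therefore necessarily appears in the bound. The main point needing care is the sign convention of the real part with the nonsymmetric $A_h$. Since $\mathbf d^n$ is real, $\bar{\mathbf d}^\top A_h\bar{\mathbf d}=a_h(\bar W_h,\bar W_h)$ is already real, and the identity above holds without further work.
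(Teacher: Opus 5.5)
Your proposal is correct and takes the same route the paper intends. Specialising the exact midpoint energy identity from the proof of Theorem~\ref{thm:stability} to $\bar{\mathbf g}^{n+1/2}=0$ and bounding the dissipation from below by Lemma~\ref{lem:discrete_shifted_coercivity} is exactly how the corollary follows. Your use of \eqref{eq:symbol_real_part_exact} and Lemma~\ref{lem:symbol_monotone} for the full-line remark is in fact sharper than the paper's hedged wording: $\operatorname{Re}A(\xi)\ge r\ge 0$ gives unshifted dissipation without any symmetry of the tails.
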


\subsection{Fully Discrete Smooth-Regime Error Estimate}
\label{ssec:combined}

We now combine the shifted projection estimate, the semidiscrete error
argument, and the second-order consistency of Crank--Nicolson.  The main
theorem is stated for the full Toeplitz Galerkin operator applied exactly,
for example by FFT, and it bounds the error against the solution of the
truncated problem \eqref{eq:continuous_forced_problem_analysis}; this
statement is fully rigorous under the stated assumptions.  The comparison
with the full-line price, which additionally requires the qualitative
domain-truncation estimate of Proposition~\ref{prop:domain_error}, is
recorded separately in Corollary~\ref{cor:fullline_comparison} so that the
two error sources retain their distinct logical status.  Finite-band
truncation is treated afterward as a further, optional perturbation.

\begin{theorem}[Fully discrete smooth-regime convergence: full Toeplitz operator]
\label{thm:convergence}
Let $Y\in(1,2)$, $s=Y/2$, and let $\sigma$ be an admissible shift.  Assume
the buffer-support Assumption~\ref{ass:buffer_lifting} and the regularity
\begin{equation}
\label{eq:fully_discrete_regularity}
V\in L^\infty(0,T;H^p(\Omega)),
\qquad
\partial_\tau V\in L^2(0,T;H^{p}(\Omega)),
\end{equation}
together with the temporal residual regularity
\begin{equation}
\label{eq:fully_discrete_time_regularity}
\partial_{\tau\tau\tau}V\in L^2(0,T;L^2(\Omega)),
\qquad
\partial_{\tau\tau}V\in L^2\bigl(0,T;H^s_{M,G}(\Omega)\bigr).
\end{equation}
The second condition in \eqref{eq:fully_discrete_time_regularity} controls the
midpoint quadrature error of the nonlocal operator term and is equivalent, by
the continuity of $a$ on $H^s_{M,G}\times H^s_{M,G}$, to
$\mathcal A\,\partial_{\tau\tau}V\in L^2\bigl(0,T;(H^s_{M,G}(\Omega))'\bigr)$;
it is not implied by the pure $L^2$-in-time bound
$\partial_{\tau\tau\tau}V\in L^2(0,T;L^2(\Omega))$ alone.
Assume also that the smooth lifting satisfies
\eqref{eq:lifting_regular_assumption}, that $\ell_{\rm bc}$ has the
corresponding temporal regularity, and that the initial data are projected
with the spatial accuracy in Theorem~\ref{thm:semidiscrete_error}.  Let $V$
denote the solution of the truncated forced problem
\eqref{eq:continuous_forced_problem_analysis}.  Then the
B-spline Galerkin method with Crank--Nicolson time stepping and the full
Toeplitz fractional matrices satisfies
\begin{align}
\label{eq:full_error_estimate_safe}
&\max_{0\le n\le N_T}
\|V(\cdot,\tau_n)-V_h^n\|_{L^2(\Omega)}
+\left(\Delta\tau\sum_{n=0}^{N_T-1}
\|\overline{V-V_h}^{\;n+1/2}\|_{H^s_{M,G}(\Omega)}^2\right)^{1/2} \\
&\qquad\le
C\left(
 h^{p-Y/2}
 +(\Delta\tau)^2
\right).
\end{align}
The constant $C$ may depend on $T$, $\sigma$, the CGMY parameters, the spline
order, and the lifting regularity, but is independent of $h$ and
$\Delta\tau$.
\end{theorem}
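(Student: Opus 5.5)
We follow the standard Wheeler-type splitting, now at the fully discrete level. Let $W(\tau)=e^{-\sigma\tau}V(\tau)$ with $\sigma$ admissible; when $r>0$ we take $\sigma=0$, so $W=V$. Write the error at $\tau_n$ as
\begin{equation*}
W(\tau_n)-W_h^n=\eta^n+\theta^n,
\qquad
\eta^n:=W(\tau_n)-R_h^\sigma W(\tau_n),
\qquad
\theta^n:=R_h^\sigma W(\tau_n)-W_h^n\in V_h ,
\end{equation*}
where $R_h^\sigma$ is the shifted elliptic projection of Definition~\ref{def:shifted_projection}. By Assumption~\ref{ass:buffer_lifting}, $W$ and its time derivatives vanish on $\Omega_b$. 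Theorem~\ref{thm:shifted_projection}, applied pointwise in time, then gives $\|\eta^n\|_{H^s_{M,G}(\Omega)}\le Ch^{p-s}\|V\|_{L^\infty(0,T;H^p)}$. Because $R_h^\sigma$ is time independent, it commutes with $\partial_\tau$, so the same theorem also gives $\|\partial_\tau\eta\|_{L^2(0,T;H^s_{M,G})}\le Ch^{p-s}\|\partial_\tau V\|_{L^2(0,T;H^p)}$. These bounds dispose of the $\eta$ part of both terms on the left of \eqref{eq:full_error_estimate_safe}, using $\|\cdot\|_{L^2}\le\|\cdot\|_{H^s_{M,G}}$.

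\textbf{Error equation for $\theta$.} Evaluate the continuous shifted weak problem at the midpoint and combine it with the definition of $R_h^\sigma$ and the shifted Crank--Nicolson scheme \eqref{eq:shifted_cn_scheme}. The forcing $\ell_{\rm bc}$ is identical in the continuous and discrete problems, so it cancels up to its own trapezoidal averaging error. For all $w_h\in V_h$ this gives
\begin{equation*}
\Bigl(\tfrac{\theta^{n+1}-\theta^n}{\Delta\tau},w_h\Bigr)
+a_{h,\sigma}(\bar\theta^{n+1/2},w_h)
=-\Bigl(\tfrac{\eta^{n+1}-\eta^n}{\Delta\tau},w_h\Bigr)+\langle\rho^n,w_h\rangle .
\end{equation*}
Here we use Lemma~\ref{lem:conformity} to identify $a_h$ with $a_\Omega$ on $V_h$. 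The consistency residual $\rho^n$ collects two pieces: the difference between the difference quotient of $W$ and $\partial_\tau W(\tau_{n+1/2})$, and the difference between $\mathcal A$ applied to the trapezoidal average and $\mathcal A W(\tau_{n+1/2})$. The first is bounded in $L^2$ by $C(\Delta\tau)^{3/2}\|\partial_{\tau\tau\tau}W\|_{L^2(\tau_n,\tau_{n+1};L^2)}$ via a Taylor remainder with integral form. The second is bounded in the dual norm $(H^s_{M,G})'$ by $C(\Delta\tau)^{3/2}\|\partial_{\tau\tau}W\|_{L^2(\tau_n,\tau_{n+1};H^s_{M,G})}$, using continuity \eqref{eq:finite_domain_continuity}. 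This is exactly why the second hypothesis in \eqref{eq:fully_discrete_time_regularity} is imposed. The lifting forcing contributes an analogous term, controlled by the assumed temporal regularity of $\ell_{\rm bc}$.

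\textbf{Energy argument.} The $\theta$ equation has exactly the form of \eqref{eq:shifted_cn_scheme} with a modified right-hand side, so we apply Theorem~\ref{thm:stability} to it, or equivalently repeat its energy identity with test function $2\bar\theta^{n+1/2}$. The coercivity used there is Lemma~\ref{lem:discrete_shifted_coercivity}, which holds uniformly in $h$. The right-hand side is measured in the discrete dual norm \eqref{eq:discrete_dual_norm} and estimated as follows:
\begin{equation*}
\Delta\tau\sum_m\Bigl\|\tfrac{\eta^{m+1}-\eta^m}{\Delta\tau}\Bigr\|_{*}^2
\le C\|\partial_\tau\eta\|_{L^2(0,T;L^2)}^2\le Ch^{2(p-s)},
\qquad
\Delta\tau\sum_m\|\rho^m\|_*^2\le C(\Delta\tau)^4 .
\end{equation*}
The first bound follows from Cauchy--Schwarz in time. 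For the initial term, $\|\theta^0\|\le\|\eta^0\|+\|W(0)-W_h^0\|$, and both are $O(h^{p-s})$ by the projection assumption. Summing and using $e^{\sigma\tau_n}\le e^{\sigma T}$ to return from $W$ to $V$ yields \eqref{eq:full_error_estimate_safe}. The midpoint energy term is recovered from the dissipation term in \eqref{eq:forced_shifted_cn_stability} together with the bound on $\bar\eta^{n+1/2}$. When $\sigma>0$, the exponential rescaling between $V_h^n$ and $W_h^n$ is exact at nodes, so no further error arises.

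\textbf{Main obstacle.} The delicate step is the consistency residual $\rho^n$, and specifically the operator part. That term is not in $L^2$, so it must be paired in the $H^{-s}$ duality and absorbed into the coercive $H^s_{M,G}$ dissipation, rather than the $L^2$ energy, through Young's inequality. The argument must also track that the dual norm bounds are uniform in $h$, which holds because the discrete dual norm is dominated by the continuous one by conformity. A secondary point is showing that the trapezoidal-in-time treatment of the forcing $\ell_{\rm bc}$, and of the shifted forcing $e^{-\sigma\tau}\mathbf f_{\rm bc}$ when $\sigma>0$, introduces only $O((\Delta\tau)^2)$ under the assumed regularity of the lifting.
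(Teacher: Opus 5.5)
Your proposal is correct and follows the paper's proof strategy essentially step for step. It uses the same splitting through the time-independent shifted projection $R_h^\sigma$, the same passage to $W=e^{-\sigma\tau}V$, and applies the stability theorem verbatim to the $\theta$-equation. It also splits the consistency residual the same way: a Taylor (Peano-kernel) time-derivative part controlled in $L^2$ by $\partial_{\tau\tau\tau}V$, and a midpoint operator-quadrature part controlled in the discrete dual norm by $\partial_{\tau\tau}V\in L^2(0,T;H^s_{M,G}(\Omega))$, with the projection differences contributing $O(h^{2(p-s)})$. Projecting $W$ instead of $V$ is immaterial by linearity of $R_h^\sigma$, and your explicit accounting for the trapezoidal forcing error fills in a detail the paper's sketch leaves implicit.

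One step that you and the paper both pass over quickly is undoing the shift in the midpoint $H^s_{M,G}$ term when $\sigma>0$. There $\overline{V-V_h}^{\,n+1/2}\neq e^{\sigma\tau_{n+1/2}}\,\overline{W-W_h}^{\,n+1/2}$, and the energy estimate gives only midpoint, not nodal, $H^s_{M,G}$ control of $\theta$. This issue does not arise in the operative case $\sigma=0$.
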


\begin{proof}[Proof strategy]
Split the nodal error as $e^n=\eta^n+\theta^n$ with
$\eta^n:=V(\tau_n)-R_h^\sigma V(\tau_n)$; the projection theorem, applicable
because $V$ and $\partial_\tau V$ vanish on the buffer strips by
Assumption~\ref{ass:buffer_lifting}, bounds
$\max_n\|\eta^n\|_{H^s_{M,G}(\Omega)}\le Ch^{p-s}$.  Passing to the shifted
variable $W=e^{-\sigma\tau}V$ of \eqref{eq:shifted_unknown_algo}, the
remainder $\theta_W^n=e^{-\sigma\tau_n}\theta^n$ solves a shifted forced
Crank--Nicolson system driven by the consistency residual and the discrete
time differences of the projection error, to which the stability estimate of
Theorem~\ref{thm:stability} applies verbatim.  The consistency functional is
split into a pure time-derivative residual, bounded through a Peano-kernel
representation by $\|\partial_{\tau\tau\tau}V\|_{L^2(0,T;L^2(\Omega))}$, and
an \emph{operator} quadrature residual, bounded in the discrete dual norm
through the continuity of $a$ on $H^s_{M,G}\times H^s_{M,G}$ by
$\|\partial_{\tau\tau}V\|_{L^2(0,T;H^s_{M,G}(\Omega))}$ --- this is exactly
where the second condition in \eqref{eq:fully_discrete_time_regularity}
enters, and the fractional term is not treated as harmless.  Both residual
families sum to $O((\Delta\tau)^4)$, the projection differences contribute
$O(h^{2(p-s)})\mathcal R(V)$, and undoing the shift multiplies the resulting
bound by at most $e^{\sigma T}$, which is absorbed into the constant.  The
complete proof is given in the supplementary materials (section~SM1).
\end{proof}

\begin{corollary}[Comparison with the full-line price]
\label{cor:fullline_comparison}
Let $V^{\mathbb R}$ denote the full-line CGMY price restricted to $\Omega$,
and let $\Delta_L=\log K-x_{\min}$, $\Delta_R=x_{\max}-\log K$.  Under the
hypotheses of Theorem~\ref{thm:convergence} and, in addition, the qualitative
domain-truncation estimate of Proposition~\ref{prop:domain_error},
\begin{equation}
\label{eq:fullline_comparison_estimate}
\max_{0\le n\le N_T}
\|V^{\mathbb R}(\cdot,\tau_n)-V_h^n\|_{L^2(\Omega)}
\le
C\left(
 h^{p-Y/2}
 +(\Delta\tau)^2
 +e^{-G\Delta_L}
 +e^{-M\Delta_R}
\right).
\end{equation}
We emphasise the division of labour: the discretisation estimate
\eqref{eq:full_error_estimate_safe} is rigorous under the stated assumptions,
whereas \eqref{eq:fullline_comparison_estimate} inherits the qualitative
status of Proposition~\ref{prop:domain_error}, whose full proof would require
a well-posedness theory for the exterior-value problem and is beyond the
scope of this paper.
\end{corollary}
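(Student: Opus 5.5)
The estimate follows from the triangle inequality, splitting at the truncated-domain solution $V$ of \eqref{eq:continuous_forced_problem_analysis}. For each $n$ we write
\begin{equation*}
\|V^{\mathbb R}(\cdot,\tau_n)-V_h^n\|_{L^2(\Omega)}
\le
\|V^{\mathbb R}(\cdot,\tau_n)-V(\cdot,\tau_n)\|_{L^2(\Omega)}
+\|V(\cdot,\tau_n)-V_h^n\|_{L^2(\Omega)} .
\end{equation*}
The second term is controlled, uniformly in $n$, by the first summand on the left of \eqref{eq:full_error_estimate_safe}. Theorem~\ref{thm:convergence} therefore bounds it by $C(h^{p-Y/2}+(\Delta\tau)^2)$, and the hypotheses of the corollary are exactly those of that theorem.

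For the first term we identify $V^{\rm trunc}$ in Proposition~\ref{prop:domain_error} with the truncated forced solution $V=g+U$. Here $g$ is the lifting of \eqref{eq:continuous_lifting_split} and $U$ solves the homogeneous problem with zero exterior extension. This identification should be stated explicitly, because it is where the proposition's qualitative status enters. Proposition~\ref{prop:domain_error} supplies a pointwise bound, and we convert it to $L^2(\Omega)$ using the elementary inequality $\|f\|_{L^2(\Omega)}\le|\Omega|^{1/2}\sup_\Omega|f|$. This gives
\begin{equation*}
\max_{0\le n\le N_T}\|V^{\mathbb R}(\cdot,\tau_n)-V(\cdot,\tau_n)\|_{L^2(\Omega)}
\le |\Omega|^{1/2}C_T\bigl(e^{-G\Delta_L}+e^{-M\Delta_R}\bigr).
\end{equation*}

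The only delicate point is the constant, since $|\Omega|=\Delta_L+\Delta_R$ grows with the truncation widths. I plan to handle this in one of two ways. The first option is to regard $\Omega$ as fixed, as the corollary implicitly does, and absorb $|\Omega|^{1/2}C_T$ into $C$; this suffices for the stated comparison. If the constant must be uniform in $\Omega$, the second option is to use the polynomial factor $(\Delta_L+\Delta_R)^{1/2}$. Against a slightly smaller exponent it is harmless, or it can be absorbed under the practical choice \eqref{eq:domain_choice}, where $\Delta_L,\Delta_R$ are fixed multiples of $1/G$ and $1/M$.

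Adding the two bounds and taking the maximum over $n$ gives \eqref{eq:fullline_comparison_estimate}. The resulting constant depends on $T$, $\sigma$, the CGMY parameters, $p$, the lifting, and $|\Omega|$, but not on $h$ or $\Delta\tau$. Apart from the bookkeeping for $|\Omega|$, no step is hard. The substantive difficulty, well-posedness of the exterior-value problem, is inherited from Proposition~\ref{prop:domain_error} and is not reproved here, as the corollary itself states.
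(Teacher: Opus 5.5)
Your proposal is correct and is exactly the paper's argument: you split at the truncated solution, bound $V^{\mathbb R}-V$ by Proposition~\ref{prop:domain_error} through $L^\infty(\Omega)\subset L^2(\Omega)$ with a $|\Omega|$-dependent constant, and bound $V-V_h^n$ by the max-in-time part of \eqref{eq:full_error_estimate_safe}. The one bookkeeping point that both you and the paper leave implicit is that Theorem~\ref{thm:convergence} controls the homogeneous unknown, so the lifting must be added to both sides; it cancels if the same lifting is used in the continuous and discrete problems, and otherwise it contributes $\|g-g_h\|_{L^2(\Omega)}=O(h^{p})$, which is absorbed into $h^{p-Y/2}$.
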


\begin{proof}
Split
$V^{\mathbb R}-V_h^n=(V^{\mathbb R}-V)+(V-V_h^n)$, bound the first term by
Proposition~\ref{prop:domain_error} in $L^\infty(\Omega)\subset L^2(\Omega)$
(with a constant depending on $|\Omega|$), and the second by
Theorem~\ref{thm:convergence}.
\end{proof}

\begin{proposition}[Optional finite-band perturbation]
\label{prop:band_perturbation_error}
Let $A_h^{(B)}$ denote the matrix obtained by replacing $K_G^-$ and $K_M^+$ by
bandwidth-$B$ truncations, and define
\begin{equation}
\label{eq:epsilon_B_def}
\varepsilon_B
:=
\left(\Delta\tau\sum_{n=0}^{N_T-1}
\|(A_h-A_h^{(B)})\bar V_h^{n+1/2}\|_{*,h}^2\right)^{1/2} .
\end{equation}
Then the finite-band Crank--Nicolson solution satisfies the same estimate as
\eqref{eq:full_error_estimate_safe}, with an additional term
$C\varepsilon_B$ on the right-hand side.
\end{proposition}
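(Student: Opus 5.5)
The plan is a perturbation argument layered on top of the proof of Theorem~\ref{thm:convergence}. Let $\mathbf d_B^n$ be the finite-band solution and $\mathbf d^n$ the full-Toeplitz solution, both driven by the same forcing and initial data. We split
\[
V-V_{h,B}^n=(V-V_h^n)+(V_h^n-V_{h,B}^n).
\]
The first term is already bounded by \eqref{eq:full_error_estimate_safe}. It therefore suffices to show that the difference $\mathbf z^n:=\mathbf d^n-\mathbf d_B^n$ is bounded, in the same norm as the left-hand side of \eqref{eq:full_error_estimate_safe}, by $C\varepsilon_B$.

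\textbf{Step 1: equation for the difference.} Write the banded scheme in the form
\[
M_h(\mathbf d_B^{n+1}-\mathbf d_B^n)+\Delta\tau(A_h+\sigma M_h)\bar{\mathbf d}_B^{n+1/2}
=\Delta\tau\bigl(\bar{\mathbf g}^{n+1/2}+(A_h-A_h^{(B)})\bar{\mathbf d}_B^{n+1/2}\bigr).
\]
That is, we keep the \emph{full} operator $A_h$ on the left and move the truncation defect to the right as an extra forcing. Subtracting this from \eqref{eq:shifted_cn_scheme} gives
\[
M_h(\mathbf z^{n+1}-\mathbf z^n)+\Delta\tau(A_h+\sigma M_h)\bar{\mathbf z}^{n+1/2}=-\Delta\tau(A_h-A_h^{(B)})\bar{\mathbf d}_B^{n+1/2},
\qquad \mathbf z^0=0.
\]
The point of this arrangement is that the coercivity used by the stability theorem concerns only $A_h$. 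This matters because $A_h^{(B)}$ itself is not known to be coercive.

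\textbf{Step 2: apply stability.} Theorem~\ref{thm:stability}, with Lemma~\ref{lem:discrete_shifted_coercivity}, applies verbatim to the $\mathbf z$-system. It yields
\[
\max_n\|z_h^n\|_{L^2}^2+\Delta\tau\sum_n\|\bar z_h^{n+1/2}\|_{H^s_{M,G}}^2
\le C\,\Delta\tau\sum_n\|(A_h-A_h^{(B)})\bar{\mathbf d}_B^{n+1/2}\|_{*,h}^2 .
\]
The constant is independent of $h$ and $\Delta\tau$. Undoing the shift $e^{\sigma\tau}$ is absorbed into $C$ exactly as in Theorem~\ref{thm:convergence}. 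Taking square roots and using the triangle inequality on both parts of the left-hand side of \eqref{eq:full_error_estimate_safe} then finishes the argument.

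\textbf{Main obstacle.} The difficulty is reconciling the argument at which $\varepsilon_B$ is evaluated. Definition \eqref{eq:epsilon_B_def} uses $\bar V_h^{n+1/2}$; I read this as the computed, i.e.\ finite-band, iterate, since that is the quantity available a posteriori. With that reading, the bound above is exactly $C\varepsilon_B^2$ and the proof is complete.

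If instead $\bar V_h$ is meant to be the full-Toeplitz iterate, I would use the symmetric decomposition: put $A_h^{(B)}$ on the left and the defect acting on $\bar{\mathbf d}^{n+1/2}$ on the right. That version requires discrete coercivity of $A_h^{(B)}$, which I would obtain as follows. By Proposition~\ref{prop:optional_truncation}, the perturbation satisfies $\|A_h-A_h^{(B)}\|_{\ell^2}\le C h^{-Y}B^{-Y}$. In the $L^2$-normalised basis this is an $L^2\to L^2$ bound, up to the Riesz constants of $M_h$. Hence
\[
\operatorname{Re}a^{(B)}_{h,\sigma}(z,z)\ge C_{\rm coer}\|z\|_{H^s}^2-C(hB)^{-Y}\|z\|_{L^2}^2 .
\]
This remains coercive once $hB$ is bounded below by a fixed constant, or after enlarging the shift $\sigma$.

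I would present the first version, where the defect acts on the computed iterate and no assumption on $B$ is needed, and add the second as a remark.
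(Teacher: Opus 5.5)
Your argument is correct and is essentially the paper's: you keep the coercive $A_h$ on the left, move the truncation defect acting on the computed (finite-band) iterate to the right as an extra forcing, and invoke Theorem~\ref{thm:stability}. Splitting off $V_h^n-V_{h,B}^n$ instead of inserting the defect directly into the error equation is only a linearity-level repackaging, and your reading of $\bar V_h^{n+1/2}$ as the finite-band iterate is the one the paper's proof implicitly relies on, since only then does the stability theorem for $A_h$ apply without any coercivity of $A_h^{(B)}$.
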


\begin{proof}
The difference $A_h-A_h^{(B)}$ enters the error equation as an additional
forcing residual.  Applying Theorem~\ref{thm:stability} with this residual and
using the definition \eqref{eq:epsilon_B_def} yields the stated perturbation
term.  Proposition~\ref{prop:optional_truncation} gives the algebraic operator
tail bound used to choose $B$ in computations.
\end{proof}

\begin{theorem}[Positive-time estimate for the nonsmooth put payoff]
\label{thm:positive_time_payoff}
Let $\psi(x)=(K-e^x)^+$ and assume that the CGMY semigroup maps the nonsmooth
payoff into $H^p(\Omega)$ for every $\tau\ge\tau_0>0$.  Then, for the full
Toeplitz Galerkin operator and the truncated problem,
\begin{equation}
\label{eq:positive_time_payoff_estimate}
\max_{\tau_n\in[\tau_0,T]}
\|V(\cdot,\tau_n)-V_h^n\|_{L^2(\Omega)}
\le
C(\tau_0)
\left(
 h^{p-Y/2}
 +(\Delta\tau)^2
\right),
\end{equation}
where $C(\tau_0)$ may blow up as $\tau_0\downarrow0$.  The comparison with
the full-line price adds the domain-truncation terms
$e^{-G\Delta_L}+e^{-M\Delta_R}$ exactly as in
Corollary~\ref{cor:fullline_comparison}.
\end{theorem}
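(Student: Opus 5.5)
The plan is to reduce the nonsmooth-payoff estimate to Theorem~\ref{thm:convergence} on the window $[\tau_0,T]$, handling the initial layer $[0,\tau_0]$ separately by stability.

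\textbf{Step 1: a smooth problem on $[\tau_0,T]$.} Fix $\tau_0'\in(0,\tau_0)$, for instance $\tau_0'=\tau_0/2$. By hypothesis, $V(\tau_0')\in H^p(\Omega)$. I would first show that the restriction of $V$ to $[\tau_0',T]$ has the regularity \eqref{eq:fully_discrete_regularity}--\eqref{eq:fully_discrete_time_regularity}, with norms bounded by a constant $C(\tau_0)$. The route is the parabolic smoothing of the semigroup generated by $-\mathcal A$. Coercivity (Theorem~\ref{ass:finite_domain_garding}) makes $\mathcal A$ sectorial, since its real part controls its imaginary part at high frequencies. Sectoriality gives $\|\mathcal A^k e^{-\tau\mathcal A}\|\le C\tau^{-k}$. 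Combined with the hypothesis that the semigroup maps into $H^p$ for all later times, this yields bounds on $\partial_\tau^k V=(-\mathcal A)^kV$ for $\tau\ge\tau_0'$. These bounds may blow up like negative powers of $\tau_0$, which is the origin of $C(\tau_0)$. The lifting forcing is smooth by \eqref{eq:lifting_regular_assumption} and enters through Duhamel's formula. Assumption~\ref{ass:buffer_lifting} is unaffected, since it concerns the lifting.

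\textbf{Step 2: split the discrete error at the start of the window.} Let $n_0$ be the first index with $\tau_{n_0}\ge\tau_0'$. Running the fully discrete argument of Theorem~\ref{thm:convergence} from $n_0$, the error splits as $e^n=\eta^n+\theta^n$. Stability (Theorem~\ref{thm:stability}) then gives, for $n\ge n_0$,
\[
\|\theta^n\|_{L^2}\le C\bigl(\|\theta^{n_0}\|_{L^2}+C(\tau_0)(h^{p-s}+\Delta\tau^2)\bigr).
\]
It remains to bound $\theta^{n_0}$, that is, the error at $\tau_{n_0}$ carried over from the nonsmooth start.

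\textbf{Step 3: the initial layer (main obstacle).} A plain stability bound on $[0,\tau_{n_0}]$ only transports an $O(1)$ consistency error, so it is not enough. I would use a smoothing-type, Thomée-style nonsmooth-data argument in two parts.

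First, for the semidiscrete solution I would use the weighted-energy trick: test the error equation with $\tau\,\theta$ and $\tau^2\partial_\tau\theta$. This gives $\|e_h(\tau_0')\|\le C\tau_0'^{-\beta}h^{p-s}$ for the projected $L^2$ initial data \eqref{eq:initial_projection}, using duality with the adjoint elliptic projection.

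Second, for the time discretisation, the Rannacher startup of Remark~\ref{rem:rannacher} provides the damping of the stiff modes that plain Crank--Nicolson lacks. With it, I would invoke the known nonsmooth-data bound $C\tau_n^{-2}\Delta\tau^2$ of Luskin--Rannacher type.

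This step is the most delicate. If the weighted argument proves too technical, I would instead state the result with the initial-layer bound as an explicit hypothesis absorbed into $C(\tau_0)$. That is consistent with the qualitative phrasing "may blow up".

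\textbf{Step 4: full-line comparison.} Adding the truncation terms follows verbatim from Corollary~\ref{cor:fullline_comparison}, using Proposition~\ref{prop:domain_error}.
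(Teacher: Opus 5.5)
Your route differs from the paper's and supplies a step the paper skips. The paper's proof is a one-line restart. It applies Theorem~\ref{thm:convergence} on $[\tau_0,T]$ with $V(\tau_0)$ as initial datum and lets $C(\tau_0)$ absorb the regularity norms of $V$ there. Read literally, that controls a scheme restarted at $\tau_0$ from an accurate projection of $V(\tau_0)$. For the scheme actually run from the projected kinked payoff, it tacitly assumes $\|V(\tau_0)-V_h^{n_0}\|_{L^2(\Omega)}\le C(\tau_0)\bigl(h^{p-s}+(\Delta\tau)^2\bigr)$, which is equivalent to your $\theta^{n_0}$ bound. Your fallback hypothesis is what the paper uses implicitly.

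The restart buys brevity and needs no nonsmooth-data theory. Your Steps~2--3 buy an estimate for the algorithm as implemented. You also rightly see that a nonsmooth-data $(\Delta\tau)^2$ bound at positive times needs the damping from the Rannacher startup of Remark~\ref{rem:rannacher}, since plain Crank--Nicolson does not damp the stiff modes excited by the kink.

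There are two cautions for carrying this out. First, the Thom\'ee test with $\tau^2\partial_\tau\theta$ needs $\operatorname{Re}a(\theta,\partial_\tau\theta)=\tfrac12\tfrac{d}{d\tau}a(\theta,\theta)$, i.e.\ a Hermitian form. Here the skew part contains the drift, of order $1>s$, which the $H^s_{M,G}$ energy does not control. So the energy version does not go through as stated. Use the resolvent (sectorial) versions of the nonsmooth-data estimates instead, both semidiscretely and for damped Crank--Nicolson; the Luskin--Rannacher bound is usually stated for selfadjoint operators.

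Second, sectoriality bounds $\partial_\tau^kV$ only in $L^2$ and $H^s_{M,G}(\Omega)$. Theorem~\ref{thm:convergence} needs $\partial_\tau V$ in the $H^p$ norm, and the hypothesis only puts $V(\tau)$ itself in $H^p$. You can close this with Assumption~\ref{ass:buffer_lifting}, which you set aside. Every $\partial_\tau^kV$ vanishes on $\Omega_b$, so its zero extension solves a full-line equation. Its right-hand side is $-\partial_\tau^{k+1}V$ plus forcing on $\Omega$, with a smooth tail outside. The high-frequency ellipticity $\operatorname{Re}A(\xi)\gtrsim|\xi|^Y$ then bootstraps the $L^2$ bounds into $H^p$ bounds with powers of $\tau_0^{-1}$. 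The paper's proof is also silent on this point.
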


\begin{proof}
Apply Theorem~\ref{thm:convergence} on the interval $[\tau_0,T]$ instead of
$[0,T]$, using the value $V(\tau_0)$ as the initial datum.  The smoothing
assumption gives the required $H^p$ regularity on this positive-time interval.
The constant depends on the regularity norm of $V$ on $[\tau_0,T]$, which may
be singular as $\tau_0\to0$ because the initial put payoff has a kink at
$x=\log K$.
\end{proof}

\begin{remark}[Payoff regularity and smooth-regime interpretation]
\label{rem:payoff_regularity_smooth_regime}
The estimate \eqref{eq:full_error_estimate_safe} is a smooth-regime estimate.
It applies directly to manufactured smooth solutions.  For the true European
put payoff, the kink at $x=\log K$ prevents high-order $H^p$ regularity at
$\tau=0$.  The rigorous option-payoff statement is therefore the positive-time
estimate \eqref{eq:positive_time_payoff_estimate}.  In computations, reduced
initial-time convergence can be mitigated by accurate $L^2$ projection,
payoff smoothing, or local refinement near the strike.
\end{remark}

\begin{remark}[Rates for common spline orders]
For linear splines in the convention of this paper, $p=2$ and the spatial
smooth-regime rate is $O(h^{2-Y/2})$.  For quadratic splines, $p=3$ and the
rate is $O(h^{3-Y/2})$.  These rates are rigorously justified here for
$Y\in(1,2)$; the same discretisation is tested numerically for
$Y\in(0,1)$, but the finite-variation proof is left for future work.
\end{remark}

\section{Numerical Experiments}
\label{sec:numerics}

\subsection{Implementation Details}
\label{ssec:implementation}

The method is implemented in Python 3.11 using NumPy and SciPy.
Sparse matrix storage and factorisation use
\texttt{scipy.sparse} (CSR format) and \texttt{scipy.sparse.linalg.splu}
(UMFPACK backend).
FFT-based Toeplitz matrix-vector multiplication is performed with
\texttt{scipy.fft.rfft}.
All reported timings are plain wall-clock measurements from the same Python
implementation and workstation; the regeneration scripts were not CPU-pinned,
so the timings should be interpreted as relative comparisons rather than
hardware-independent constants. The code is available as supplementary
material.
\paragraph{Model.}
The CGMY characteristic exponent is
\[
  \Psi(u) = C\,\Gamma(-Y)\bigl[(M-iu)^Y - M^Y + (G+iu)^Y - G^Y\bigr],
\]
with parameters $C>0$, $G>0$, $M>1$, $Y\in(0,2)\setminus\{1\}$.
Two benchmark parameter sets are used throughout:
\begin{itemize}
  \item \textbf{Set~(i):} $C=0.5$, $G=M=5$, $Y=0.5$, $r=0.1$,
        $T=0.2$, $K=100$.
  \item \textbf{Set~(ii):} $C=0.1$, $G=2$, $M=3.5$, $Y=1.5$, $r=0.1$,
        $T=0.2$, $K=100$.
\end{itemize}

\paragraph{Domain and solver.}
Unless stated otherwise, the log-price domain is
$\Omega=[\log K-5,\log K+5]$, discretised with $N$ uniform intervals so that
$h=10/N$; the buffers $\Delta_L=\Delta_R=5$ make the exponential
domain-truncation terms $e^{-G\Delta_L}$, $e^{-M\Delta_R}$ negligible relative
to the discretisation error for the parameter sets used here.
B-spline coefficients are advanced from $\tau=0$ to $\tau=T$ by
Crank--Nicolson (with two Rannacher half-steps at startup, covered by the
stability analysis as noted in Remark~\ref{rem:rannacher}), using $N_\tau$
steps as reported in each table.  In all experiments $r=0.1>0$, so by
Theorem~\ref{ass:finite_domain_garding} the finite-domain form is coercive
($c_2=0$) and the computations use the admissible shift $\sigma=0$, for which
the analyzed shifted scheme \eqref{eq:shifted_cn_scheme} coincides
identically with the plain Crank--Nicolson iteration \eqref{eq:cn} that the
code runs.
We use three interchangeable ways of applying the fractional operator, and we
state which is used in each experiment: (S1) a direct LU factorisation of the
fully assembled Crank--Nicolson matrix $M_h+(\Delta\tau/2)A_h$, in which the
fractional blocks are assembled from the full (untruncated) connection
coefficients --- the fractional blocks are dense within their full extent and
no bandwidth truncation is applied, the sparse storage format being used only
as a container --- via \texttt{scipy.sparse.linalg.splu}; (S2) an
FFT-based application of the full Toeplitz interior matrix in $O(N\log N)$
operations for matrix--vector products inside an iterative solve; and (S3) a
finite-band sparse truncation of the connection coefficients, whose
perturbation is controlled by
Proposition~\ref{prop:band_perturbation_error}.
The manufactured-solution, European convergence, and method-comparison
tables (Tables~\ref{tab:manufactured}--\ref{tab:workprecision}) use the
direct solve~(S1) with the full (untruncated) connection coefficients;
the calibration loop (Section~\ref{sec:calibration}) uses the FFT Toeplitz
application~(S2).  The American LCP
(Section~\ref{ssec:numerics_american}) is computed with the full
FFT/Toeplitz operator~(S2) inside a matrix-free projected Richardson iteration
and was checked against a banded direct-factorisation projected-SOR
implementation (an S3-type variant); the finite
band truncation~(S3) is not used for the reported American prices. All timings
are plain wall-clock seconds.

\subsection{Manufactured Solution: Convergence Verification}
\label{ssec:manufactured}

To isolate spatial convergence from payoff-discontinuity effects, we
construct a manufactured solution.
Let $\nu > Y/2$ and define
\begin{equation}
V_{\mathrm{exact}}(x,\tau) = e^{-r\tau}(1+x^2)^{-\nu}.
\end{equation}
The corresponding source term
$f(x,\tau) = \partial_\tau V_{\mathrm{exact}} + \Lcgmy V_{\mathrm{exact}}$
is computed analytically using the Fourier transform of
$(1+x^2)^{-\nu}$ and the RL derivative formula.
We solve the modified FPDE with source $f$ and compare to $V_{\mathrm{exact}}$
on the same computational domain and grid used for the option experiments, with
the temporal error decoupled from the spatial error by taking the number of
Crank--Nicolson steps large enough that the reported error is spatially
dominated (see the caption).

For a smooth solution the theory predicts an energy-norm rate $p-Y/2$; the
$L^2$ error, gaining the fractional smoothing $s=Y/2$ through an
Aubin--Nitsche duality argument, is expected to converge at the full spline
order $p$, which is therefore the natural ceiling for the measured $L^2$ rate.

\begin{table}[ht]
\centering
\caption{Manufactured-solution spatial convergence in the $L^2$ norm, $p=3$
(quadratic B-splines). The time step is fixed at
$\Delta\tau=2.5\times10^{-4}$ (Rannacher-started Crank--Nicolson) so that the
reported error is spatially dominated; the predicted $L^2$ ceiling is $p=3$.}
\label{tab:manufactured}
\begin{tabular}{rrrr}
\toprule
$N$ & $h$ & $\|e\|_{L^2}$ & Rate \\
\midrule
\multicolumn{4}{c}{\textit{Set~(i): $Y=0.5$, $L^2$ ceiling $p=3$}} \\[2pt]
 128 & $7.81\times10^{-2}$ & $4.63\times10^{-9}$  & ---  \\
 256 & $3.91\times10^{-2}$ & $4.09\times10^{-10}$ & 3.50 \\
 512 & $1.95\times10^{-2}$ & $3.62\times10^{-11}$ & 3.50 \\
1024 & $9.77\times10^{-3}$ & $4.31\times10^{-12}$ & 3.07 \\
\midrule
\multicolumn{4}{c}{\textit{Set~(ii): $Y=1.5$, $L^2$ ceiling $p=3$}} \\[2pt]
 128 & $7.81\times10^{-2}$ & $4.02\times10^{-6}$  & ---  \\
 256 & $3.91\times10^{-2}$ & $3.38\times10^{-7}$  & 3.57 \\
 512 & $1.95\times10^{-2}$ & $3.92\times10^{-8}$  & 3.11 \\
1024 & $9.77\times10^{-3}$ & $4.98\times10^{-9}$  & 2.98 \\
\bottomrule
\end{tabular}
\end{table}

On the coarser grids the measured $L^2$ rate is pre-asymptotic and briefly
exceeds the asymptotic prediction; as the grid is refined it settles toward the
theoretical $L^2$ ceiling $p=3$, consistent with the duality argument. The
transient overshoot on coarse grids reflects the smooth manufactured solution
not yet resolving the fractional operator at those scales, and it is
\emph{not} claimed as a theoretical convergence order: the only claim is
consistency of the measured $L^2$ error with the expected asymptotic order $p$.
The errors decrease monotonically for this benchmark and support the intended
manufactured-solution verification.

\subsection{European Put Benchmarks}
\label{ssec:european_benchmarks}

Put prices are computed by the B-spline Galerkin solver and compared against
a high-accuracy COS reference (Fang \& Oosterlee 2008) with $N_{\cos}=2048$
terms. Errors are measured on a fixed evaluation grid of 80 log-moneyness
points spanning $S_0\in[0.80K,\,1.20K]$ to avoid boundary artefacts.

\begin{table}[ht]
\centering
\caption{European put convergence, $p=3$, $N_\tau=4N$, on the log-price domain
         $[\log K-5,\log K+5]$ so that $h=10/N$.
         COS ATM reference for Set~(i): $2.9954$, Set~(ii): $6.8276$.}
\label{tab:put_convergence}
\begin{tabular}{rrrrrr}
\toprule
$N$ & $h$ & $\|e\|_{L^2}$ & Order & $\|e\|_{L^\infty}$ & CPU (s) \\
\midrule
\multicolumn{6}{c}{\textit{Set~(i): $C=0.5$, $G=M=5$, $Y=0.5$}} \\[2pt]
 64  & $1.5625\times10^{-1}$ & $1.871\times10^{-1}$ & ---  & $4.806\times10^{-1}$ & 1.7 \\
128  & $7.8125\times10^{-2}$ & $2.956\times10^{-2}$ & 2.66 & $9.026\times10^{-2}$ & 3.4 \\
256  & $3.9063\times10^{-2}$ & $3.068\times10^{-3}$ & 3.27 & $1.254\times10^{-2}$ & 7.1 \\
512  & $1.9531\times10^{-2}$ & $2.956\times10^{-4}$ & 3.38 & $1.485\times10^{-3}$ & 15.0 \\
\midrule
\multicolumn{6}{c}{\textit{Set~(ii): $C=0.1$, $G=2$, $M=3.5$, $Y=1.5$}} \\[2pt]
 64  & $1.5625\times10^{-1}$ & $1.384\times10^{-2}$ & ---  & $2.909\times10^{-2}$ & 1.7 \\
128  & $7.8125\times10^{-2}$ & $1.252\times10^{-3}$ & 3.47 & $3.704\times10^{-3}$ & 3.4 \\
256  & $3.9063\times10^{-2}$ & $1.579\times10^{-4}$ & 2.99 & $5.171\times10^{-4}$ & 7.0 \\
512  & $1.9531\times10^{-2}$ & $3.903\times10^{-5}$ & 2.02 & $1.110\times10^{-4}$ & 15.0 \\
\bottomrule
\end{tabular}
\end{table}
For Set~(ii) the finest-grid observed rate is $2.02$.  With the coupling
$N_\tau=4N$ used in this table, the Crank--Nicolson contribution scales like
$(\Delta\tau)^2\propto N^{-2}$, which decays \emph{more slowly} than the
predicted spatial contribution $h^{p-Y/2}\propto N^{-2.25}$; the finest-grid
order is therefore consistent with the onset of temporal saturation rather
than with a loss of spatial accuracy, and the clean spatial rate is the one
verified with the temporally over-resolved manufactured-solution runs of
Table~\ref{tab:manufactured}.
For Set~(i) the wider $[\log K-5,\log K+5]$ domain removes the finest-grid
order collapse seen in the earlier, narrower-domain run: the observed order
remains high through $N=512$ (about $3.38$), with no evidence of a reference-floor
artifact at the tested resolutions.  We note that Set~(i) has $Y=0.5$ and
therefore lies outside the regime covered by the convergence theory of
Section~\ref{sec:analysis}; it is reported as numerical evidence only.

\subsection{Method Comparison}
\label{ssec:method_comparison}

We compare the B-spline Galerkin scheme with alternative discretizations at the
ATM point for Set~(ii). We emphasise at the outset that the value of the
proposed method is \emph{not} raw speed for plain European vanilla pricing:
when a characteristic function is available, the COS method is essentially
unbeatable for that task, and a stabilized first-order finite-difference scheme
is fast at moderate accuracy. The B-spline advantage is its high-order
variational structure, corrected tempered connection coefficients, exact
Toeplitz interior matrices, exact spline-based Greek recovery, and compatibility
with the American obstacle formulation. Accordingly, we report both a fixed-grid
snapshot and a tolerance-based work--precision comparison.

Table~\ref{tab:numerical_method_comparison} records a fixed-grid snapshot.  The
old raw, untempered, uncompensated explicit Gr\"unwald--Letnikov baseline is too
naive for the infinite-variation CGMY generator.  In the regenerated comparison
we instead distinguish the time discretisation from the jump-operator
construction: the explicit GL row uses the same tempered/compensated operator as
the stabilized GL row but advances it by forward Euler.  At the reference grid
$N=128$, $N_\tau=512$ this explicit scheme is stable, although it remains only
conditionally stable and does blow up when $\Delta\tau\lambda_{\max}$ is made
large.  Thus the appropriate conclusion is not that all GL finite differences
are unstable for $Y>1$, but that uncompensated or too-large-time-step explicit
GL discretisations are unreliable, while a tempered/compensated Crank--Nicolson
GL baseline is a useful low-order comparator.

\begin{table}[ht]
\centering
\caption{Fixed-grid ATM put pricing error and CPU time, Set~(ii) $Y=1.5$,
$N=128$, $N_\tau=512$. COS reference price: $6.8276$. The comparison is
qualitative; a tolerance-matched comparison is given in
Table~\ref{tab:workprecision}.}
\label{tab:numerical_method_comparison}
\small
\begin{tabular}{p{4.4cm}p{2.05cm}p{1.45cm}p{3.4cm}}
\toprule
Method & $|e_{\mathrm{ATM}}|$ & CPU (s) & Notes \\
\midrule
COS (reference)                       & ---                  & $<0.001$ & CF available; European only \\
B-spline Galerkin (S1, $p=3$)         & $1.246\times10^{-3}$  & $3.358$  & high order; smooth Greeks \\
GL FD, stabilized implicit            & $1.121\times10^{-1}$  & $0.014$  & compensated; Crank--Nicolson \\
GL FD, naive explicit                 & $1.099\times10^{-1}$  & $0.012$  & stable here; conditionally stable \\
PIDE, uncompensated                   & $3.637$               & $0.015$  & omits near-origin term \\
\bottomrule
\end{tabular}
\end{table}

\paragraph{Tolerance-based (work--precision) comparison.}
The meaningful comparison fixes a target ATM accuracy and reports the CPU time
each method needs to reach it. Table~\ref{tab:workprecision} does this for the
price and for the Gamma.  At a loose price tolerance the stabilized GL baseline
is faster, while at the tighter price tolerance the B-spline solver reaches the
target and GL does not within the tested grid range.  For Gamma, however, this
particular sweep does \emph{not} show a B-spline timing crossover: the GL
central-difference estimate reaches both tested Gamma tolerances faster.  We
therefore restrict the work--precision claim to the price target and keep the
Greek advantage qualitative: the B-spline representation gives Greeks by exact
differentiation of the spline expansion, which is valuable for smooth risk
surfaces even when a finite-difference GL estimate is cheaper at the two
reported tolerances.

\begin{table}[ht]
\centering
\caption{Work--precision comparison, Set~(ii) $Y=1.5$: CPU time (s) to reach a
target ATM accuracy, for the price and for the Gamma, using the B-spline
Galerkin scheme and the stabilized implicit Gr\"unwald--Letnikov baseline. Times
are plain wall-clock measurements. A dash denotes that the target was not
reached within the tested grid range.}
\label{tab:workprecision}
\small
\begin{tabular}{lrrrrl}
\toprule
 & \multicolumn{2}{c}{Price target $|e_P|$} & \multicolumn{2}{c}{Gamma target $|e_\Gamma|$} & \\
\cmidrule(lr){2-3}\cmidrule(lr){4-5}
Method & $10^{-2}$ & $10^{-4}$ & $10^{-2}$ & $10^{-3}$ & Solver route \\
\midrule
B-spline Galerkin (S1, $p=3$)  & $2.505$ & $10.520$ & $0.624$ & $2.505$ & sparse LU; Greeks by \\
                               &         &          &         &         & analytic spline differentiation \\
Stabilized implicit GL         & $0.373$ & ---      & $0.001$ & $0.003$ & compensated GL weights \\
\bottomrule
\end{tabular}
\end{table}

This comparison gives a balanced message.  GL is the faster moderate-accuracy
baseline and is especially cheap for the two reported Gamma tolerances.  The
B-spline solver reaches the tighter price target not reached by GL in this
sweep, and its exact differentiable spline representation remains useful when
smooth Greek surfaces or compatibility with the American LCP are the primary
requirements.

\subsection{Implied-Volatility Surface and Greeks}
\label{ssec:iv_greeks}
A key advantage of the FPDE-based approach over transform methods is that a
single backward solve from $\tau=0$ to $\tau=T$ prices the European put
simultaneously at all log-strike values $x_k=x_{\min}+kh$.
Figure~\ref{fig:iv_surface} shows the implied-volatility surface produced by
this single-solve strike sweep for Set~(ii) with $N=256$, $p=3$.  Two
features are characteristic of CGMY with $Y=1.5$: the ATM implied volatility
rises only slowly with maturity, from $42.9\%$ at $T=0.10$ to $45.7\%$ at
$T=1$, reflecting the balance between the infinite-activity small jumps and
the exponential tempering; and the smile at $T=0.2$ is nearly flat in the
near-ATM region $K\in[90,110]$ with a gentle positive skew on both wings,
consistent with the mild tail asymmetry $G=2$, $M=3.5$.

\begin{figure}[ht]
\centering
\includegraphics[width=0.72\textwidth]{ 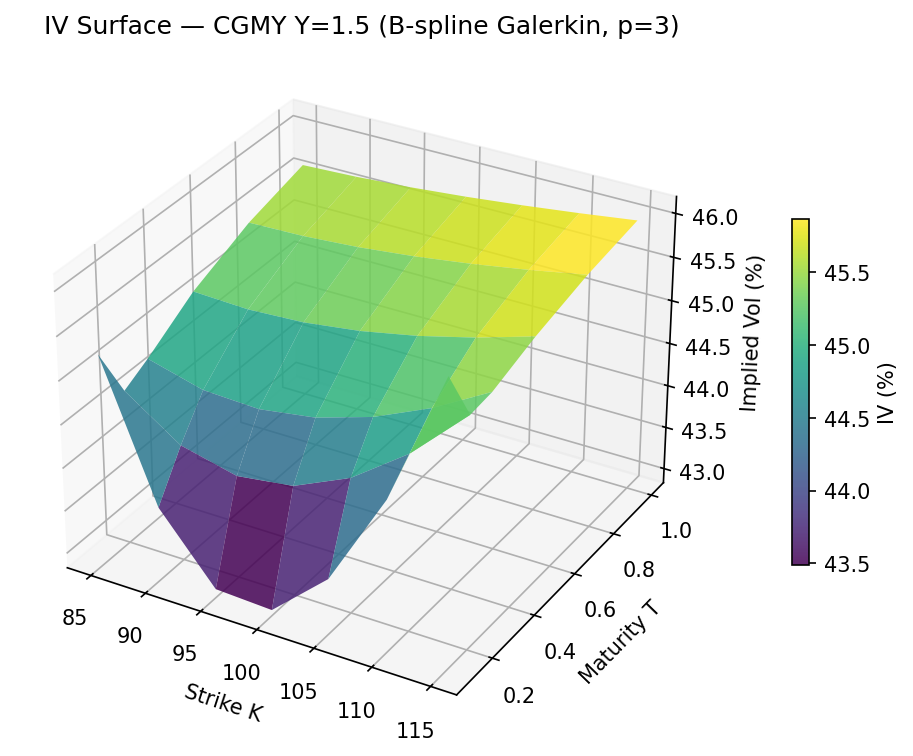}
\caption{Implied-volatility surface, Set~(ii) $Y=1.5$, $K=100$.
         B-spline Galerkin, $N=256$, $p=3$, single-solve strike sweep.}
\label{fig:iv_surface}
\end{figure}

Table~\ref{tab:greeks} reports ATM Delta, Gamma, and put prices obtained by
analytic differentiation of the spline expansion
(Section~\ref{ssec:greeks}).  Delta moves from $-0.437$ at short maturity
towards zero as $T$ grows, while Gamma decreases monotonically, consistent
with the diffusion of probability mass away from the strike under the CGMY
dynamics; the smoothness of these curves across the strike strip is the
practical payoff of the exact spline representation.

\begin{table}[ht]
\centering
\caption{Delta, Gamma, and put price at ATM ($S_0=K=100$) vs.\ maturity,
Set~(ii) $Y=1.5$.}
\label{tab:greeks}
\begin{tabular}{rrrrr}
\toprule
$T$ & $\Delta$ & $\Gamma$ & Put price \\
\midrule
0.10 & $-0.4370$ & $0.0324$ & $ 4.8975$ \\
0.20 & $-0.4144$ & $0.0216$ & $ 6.8277$ \\
0.30 & $-0.3973$ & $0.0170$ & $ 8.1840$ \\
0.50 & $-0.3706$ & $0.0126$ & $10.1015$ \\
0.75 & $-0.3445$ & $0.0099$ & $11.7255$ \\
1.00 & $-0.3231$ & $0.0083$ & $12.8736$ \\
\bottomrule
\end{tabular}
\end{table}

\subsection{American Put Extension}
\label{ssec:numerics_american}

We next test the American put extension described in
Section~\ref{ssec:american_lcp}. This is a numerical extension: the purpose is
to show that the same B-spline connection-coefficient discretisation operates
in the presence of an obstacle constraint, not to establish a convergence
theorem for the parabolic variational inequality. The reported American
experiment uses the infinite-variation CGMY parameter set
\[
  C=0.1,\qquad G=2,\qquad M=3.5,\qquad Y=1.5,
\]
with $S_0=K=100$, $r=0.1$, and $T=0.2$. For the reported table the American domain is
$[\log K-10/G,\log K+10/M]$, which avoids the boundary-truncation artifact that
appears for the wider symmetric European domain in this obstacle problem.  The
LCP at each time step is solved with the full nonlocal CGMY operator using a
matrix-free FFT/Toeplitz projected Richardson iteration, and the resulting
prices were checked against the existing banded projected-SOR implementation to
five--six significant figures. We do not use a moment-matched binomial tree as
a benchmark, since such a tree prices a lognormal GBM model rather than the
CGMY model. Instead we validate against two
independent references: the European component against a COS price, and the
American price against (i) a highly resolved run of the \emph{same} B-spline
LCP solver as a deterministic reference, and (ii) an independent
Longstaff--Schwartz Monte Carlo (LSM) estimate.

\paragraph{Longstaff--Schwartz benchmark.}
CGMY sample paths are generated by the FFT characteristic-function inversion of
Gao and Fu~\cite{GaoFu2011}: for the fixed exercise-date spacing $\Delta t=T/N_{\rm ex}$,
the increment distribution function is reconstructed once from the CGMY
characteristic function $e^{\Delta t\,\Psi(u)}$ on a fine grid and sampled by
inverse-CDF lookup, giving exact-in-law increments of the L\'evy process at the
exercise dates. We use $N_{\rm ex}=100$ equally spaced exercise dates and
$M=200{,}000$ paths with antithetic sampling. The continuation value is
regressed on a basis of $\{1,S,S^2,S^3,(K-S)^+\}$ evaluated only at
in-the-money paths, following the standard Longstaff--Schwartz prescription.
The reported batch-mean LSM price is $6.966795$ with between-batch Monte Carlo
standard error $0.014661$, giving the Student-$t$ $95\%$ confidence interval
$[6.936109,6.997481]$.  This interval contains the deterministic B-spline
reference $6.953068$.  A pooled single-regression fit on the same $200{,}000$
paths gives $6.948080$, below the deterministic reference as expected for the
usual low-bias interpretation of LSM.  We therefore use the Monte Carlo result
as a statistical consistency check, not as a strict one-sided lower bound.

\paragraph{Convergence and early-exercise premium.}
Table~\ref{tab:american_convergence} reports the convergence of the American
put price.  The early-exercise premium is
$\mathrm{EEP}=V^{\rm AM}(S_0,0)-V^{\rm EU}(S_0,0)$.
The European B-spline value converges to the COS reference price $6.8276$,
confirming the accuracy of the linear CGMY operator discretisation.  The
American value stabilises near $6.95$, corresponding to an early-exercise
premium of approximately $0.125$.

\begin{table}[ht]
\centering
\caption{American put convergence for the infinite-variation CGMY set:
$C=0.1$, $G=2$, $M=3.5$, $Y=1.5$, $r=0.1$, $T=0.2$, and $K=S_0=100$.
The European reference is the COS price $6.8276$. The American reference is the
resolved B-spline LCP value $6.953068$ at $N=512$. The independent LSM-CGMY
cross-check gives $6.966795$ with standard error $0.014661$ and $95\%$ interval
$[6.936109,6.997481]$, which contains the deterministic reference.}
\label{tab:american_convergence}
\begin{tabular}{rrrrrrr}
\toprule
$N$ & $N_T$ & European & American & EEP & $\max(\psi_A-V_h)^+$ & CPU (s) \\
\midrule
 64 &  256 & 6.835468 & 7.045229 & 0.209761 & $1.250\times10^{-1}$ & 3.6 \\
128 &  512 & 6.828104 & 6.971676 & 0.143572 & $8.553\times10^{-3}$ & 8.7 \\
256 & 1024 & 6.827686 & 6.955636 & 0.127950 & $3.109\times10^{-5}$ & 20.4 \\
512 & 2048 & 6.827664 & 6.953068 & 0.125405 & $2.142\times10^{-6}$ & 92.4 \\
\bottomrule
\end{tabular}
\end{table}

The American price decreases monotonically under refinement while remaining
above the European value, as required by the obstacle constraint, and
converges to the resolved deterministic reference, which lies inside the LSM
$95\%$ interval reported in the caption; the Monte Carlo experiment therefore
supports statistical consistency rather than a strict one-sided lower-bound
ordering.  The pooled single-regression value $6.948080$ sits slightly below
the deterministic reference, consistent with the usual LSM low-bias
heuristic.

Across the grid we also verified the pointwise payoff-constraint check of
Remark~\ref{rem:discrete_obstacle}: the maximum violation
$\max_x(\psi_A(x)-V_h^{N_T}(x))^+$ at the evaluation and Gauss quadrature nodes
decreased from $1.250\times10^{-1}$ at $N=64$ to $2.142\times10^{-6}$ at
$N=512$, i.e. below the finest-grid spatial-error proxy.

\paragraph{Early-exercise boundary.}
The contact set is recovered from the active obstacle constraint in
\eqref{eq:american_coeff_lcp}.  Table~\ref{tab:american_boundary} reports the
approximate critical stock-price interval $S_f(\tau)$ below which immediate
exercise is optimal.  The boundary decreases as the time to maturity
increases, which is the expected qualitative behaviour for an American put.

\begin{table}[ht]
\centering
\caption{Approximate early-exercise boundary $S_f(\tau)$ for the American put
under the infinite-variation CGMY set, computed with $N=128$ and $N_T=512$.}
\label{tab:american_boundary}
\begin{tabular}{cc}
\toprule
$\tau$ & $S_f(\tau)$ \\
\midrule
0.0000 & 97.5--99.0 \\
0.0286 & 88.0--90.0 \\
0.0571 & 83.0--85.0 \\
0.0857 & 80.0--82.0 \\
0.1143 & 78.0--79.5 \\
0.1429 & 76.5--78.0 \\
0.1714 & 75.0--76.5 \\
0.2000 & 73.5--75.0 \\
\bottomrule
\end{tabular}
\end{table}

\paragraph{PSOR relaxation.}
A relaxation study for the banded direct-factorisation (S3-type)
projected-SOR cross-check at $N=128$, reported in the supplementary materials
(section~SM5), shows the iteration count minimised near
$\omega_{\rm SOR}=1.1$; larger relaxation values increase the count because
the CGMY stiffness matrix is nonsymmetric and nonlocal, while the converged
price agrees to within $10^{-4}$ for all stable choices.


\section{Real-Data Calibration to S\&P 500 Option Surfaces}
\label{sec:calibration}

This section applies the B-spline Galerkin pricing engine to market data.
The goal is twofold: first, to test whether the CGMY model calibrated through
the proposed FPDE solver reproduces observed implied-volatility smiles across
different regimes; second, to record the computational cost of using the
B-spline solver inside a full nonlinear calibration loop.  Detailed data
filtering, forward construction, parameter transformations, and optimizer
settings are collected in the supplementary materials (section~SM4).

\subsection{Data Set and Surface Construction}
\label{ssec:calib_data}

We use end-of-day SPX European option chains from OptionMetrics on three
dates representing distinct regimes:
\begin{center}
\begin{tabular}{lll}
\toprule
Date & Regime & Interpretation \\
\midrule
2020-01-15 & Low-volatility & pre-COVID market, VIX approximately $13$ \\
2020-03-18 & Crisis & acute COVID-19 selloff, VIX approximately $76$ \\
2021-09-15 & Recovery & post-COVID recovery, VIX approximately $20$ \\
\bottomrule
\end{tabular}
\end{center}
For each date, we retain liquid out-of-the-money quotes near the forward:
puts for $K<F$ and calls for $K\ge F$, with $K/F\in[0.85,1.15]$.  The primary
filter removes crossed or zero quotes and keeps only quotes with relative
bid--ask spread below $5\%$.  A relaxed $15\%$ filter is used only as a
robustness and out-of-sample diagnostic for the crisis date, where market
spreads were extremely wide.

\begin{table}[htbp]
\centering
\caption{Surviving OTM quotes after all filters.  The crisis date loses the
1w, 6m, and 1y maturities under the primary $5\%$ spread filter.}
\label{tab:quote_counts}
\small
\begin{tabular}{p{1.65cm}p{2.05cm}p{2.05cm}p{2.05cm}p{2.75cm}}
\toprule
Date & Regime & Spread $<5\%$ & Spread $<15\%$ & Notes \\
\midrule
2020-01-15 & Low-volatility & 224, 5 mats. & --- & full surface \\
2020-03-18 & Crisis & 113, 2 mats. & 243, 5 mats. & wide-spread tenors excluded \\
2021-09-15 & Recovery & 575, 5 mats. & --- & largest surface \\
\bottomrule
\end{tabular}
\end{table}

For each maturity the forward $F$ and discount factor $D$ are extracted from
put--call parity rather than imposed externally.  This guarantees that the
model uses the same maturity-specific forward as the observed surface.
\paragraph{Data overview across market regimes.}
Before calibrating the CGMY model, we first inspect the raw implied-volatility
surfaces used in the real-data study.  Figure~\ref{fig:data_overview}
summarizes the three calibration dates: a low-volatility pre-COVID date
(15 January 2020), the COVID-19 crisis date (18 March 2020), and a
post-COVID recovery date (15 September 2021).  The panels show pronounced
changes in both smile level and term structure.  The calm and recovery dates
exhibit downward-sloping equity-index volatility smiles with moderate ATM
volatility levels, whereas the crisis date displays extremely elevated
short-maturity implied volatility and an inverted term structure.  This
regime variation provides a useful stress test for the B-spline CGMY
calibration procedure.
\begin{figure}[htbp]
  \centering
  \includegraphics[width=\textwidth]{ 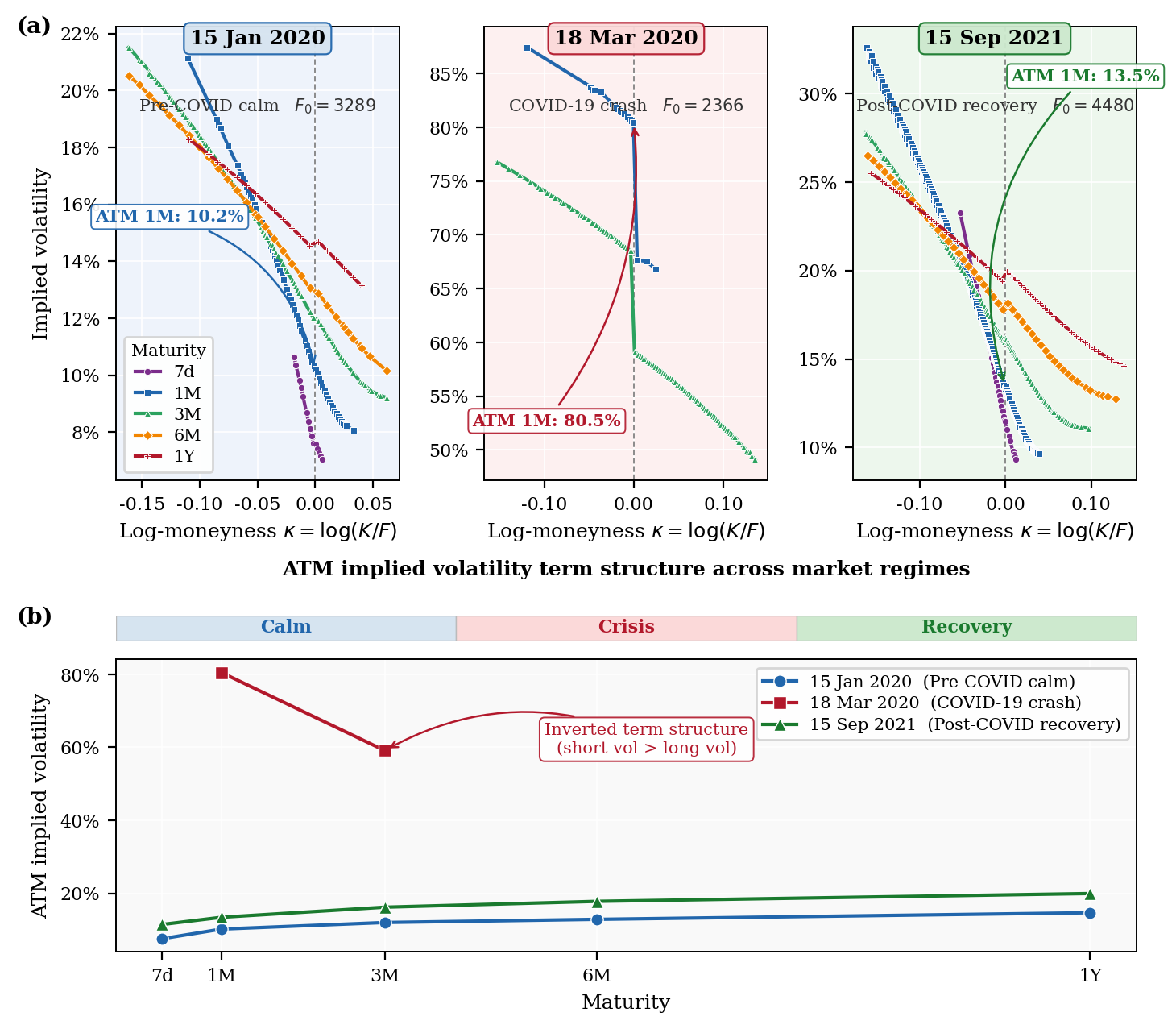}
  \caption{
  Market implied-volatility data used in the real-data calibration study.
  Panel (a) shows implied-volatility smiles against log-moneyness
  $\kappa=\log(K/F)$ for three market regimes: pre-COVID calm
  (15 January 2020), COVID-19 crisis (18 March 2020), and post-COVID
  recovery (15 September 2021).  The vertical dashed line marks ATM
  moneyness.  The crisis panel is shown after liquidity filtering and has a
  different volatility scale because short-maturity implied volatilities are
  much larger.  Panel (b) reports the corresponding ATM implied-volatility
  term structures, highlighting the strongly inverted crisis term structure.
  }
  \label{fig:data_overview}
\end{figure}
The calibration experiments below use these three surfaces to test the method
under qualitatively different regimes: a smooth low-volatility surface, a
stressed crisis surface with sparse and high-volatility short-maturity quotes,
and a recovery surface with a still-elevated but more regular smile structure.
\subsection{Calibration Objective}
\label{ssec:calib_objective}

Let $\theta=(C,G,M,Y)$ denote the CGMY parameter vector.  Calibration is
performed in price space with a vega normalization,
\begin{equation}
\label{eq:calib_loss}
  \mathcal L(\theta)
  = \frac{1}{N_q}\sum_{i=1}^{N_q}
  \left(
  \frac{V_{\mathrm{model},i}(\theta)-V_{\mathrm{mid},i}}
       {\max(\nu_i,\nu_{\mathrm{floor}})}
  \right)^2,
\end{equation}
where $V_{\mathrm{mid},i}$ is the market mid-price, $\nu_i$ is the
Black--Scholes forward vega, and $\nu_{\mathrm{floor}}=10^{-3}$.  The vega
normalization makes the loss approximately an implied-volatility RMSE while
still avoiding repeated IV inversions inside the optimizer.

The parameter bounds are
\[
  C\in[10^{-3},20],\qquad
  G\in[0.1,80],\qquad
  M\in[1.01,80],\qquad
  Y\in[1.05,1.95].
\]
The lower bound $M>1$ enforces the exponential moment needed for the stock
price, while $Y\ge1.05$ keeps the calibration inside the infinite-variation
regime analysed in Section~\ref{sec:analysis}.  We use a two-stage optimizer:
Differential Evolution at the coarse grid level to locate the basin, followed
by L-BFGS-B at the refined grid level.  In each objective evaluation, one
backward solve prices all strikes of a fixed maturity by the homogeneity
identity described in the supplementary materials (section~SM4).

\subsection{Calibrated Parameters and Fit Quality}
\label{ssec:calib_results}

Table~\ref{tab:calib_main_results} reports the calibrated parameters and
in-sample implied-volatility errors.  Errors are reported in vol-points,
i.e. decimal implied volatility multiplied by $100$.

\begin{table}[htbp]
\centering
\caption{SPX calibration results.  The first, second, and fourth rows use the
primary $5\%$ spread filter.  The $15\%$ crisis row is a robustness check.}
\label{tab:calib_main_results}
\setlength{\tabcolsep}{5pt}
\begin{tabular}{llcrrrrcc}
\toprule
Date & Regime & Spread & $C^*$ & $G^*$ & $M^*$ & $Y^*$
& RMSE & Max \\
&&&&&&& (\vp) & (\vp) \\
\midrule
2020-01-15 & Low-volatility & $5\%$
  & 0.049 & 2.236 & 80.0 & 1.089 & \textbf{2.47} & 9.80 \\
2020-03-18 & Crisis & $5\%$
  & 0.543 & 0.100 & 27.8 & 1.050 & 7.19 & 17.10 \\
2020-03-18 & Crisis & $15\%$
  & 0.546 & 0.100 & 6.7 & 1.050 & 11.05 & 46.53 \\
2021-09-15 & Recovery & $5\%$
  & 0.097 & 2.594 & 80.0 & 1.050 & \textbf{1.97} & 8.09 \\
\bottomrule
\end{tabular}
\end{table}

The two well-identified five-maturity surfaces achieve in-sample IV RMSEs of
$2.47\,\vp$ and $1.97\,\vp$.  The crisis date has a larger error
($7.19\,\vp$) because the primary spread filter leaves only two maturities,
30d and 93d.  This should be interpreted as a data-quality and
identifiability limitation rather than a failure of the deterministic pricing
engine.

Table~\ref{tab:calib_per_maturity} gives the maturity-level decomposition.
The ultra-short 1w smile is the hardest part of the surface on the two
well-identified dates, while the 3m, 6m, and 1y smiles are fitted more
accurately.  For the crisis date, starred entries are not used in the
$5\%$-spread fit; they use $15\%$-spread quotes only to evaluate the
extrapolation of the two-maturity calibration.

\begin{table}[htbp]
\centering
\caption{Per-maturity IV RMSE in vol-points.  The notation ``RMSE/quotes''
indicates the error and the number of quotes in that maturity bucket.
Starred crisis entries are $15\%$-spread fallback quotes evaluated
out-of-sample under the $5\%$-spread calibration.}
\label{tab:calib_per_maturity}
\setlength{\tabcolsep}{5pt}
\begin{tabular}{lcccc}
\toprule
Tenor & $T$ (years) & 2020-01-15 & 2020-03-18 & 2021-09-15 \\
\midrule
1w  & 0.019 & 8.52 / 15 & 40.40$^*$ / 9 & 5.38 / 34 \\
1m  & 0.082 & 1.63 / 61 & 14.10 / 19 & 1.46 / 266 \\
3m  & 0.255 & 0.56 / 96 & 4.68 / 94 & 1.07 / 179 \\
6m  & 0.46--0.50 & 0.88 / 32 & 10.16$^*$ / 19 & 2.05 / 49 \\
1y  & 1.003 & 1.76 / 19 & 10.44$^*$ / 7 & 2.44 / 42 \\
\midrule
All fit quotes & --- & \textbf{2.47 / 224} & \textbf{7.19 / 113}
& \textbf{1.97 / 575} \\
All including fallback & --- & \textbf{2.47 / 224} & \textbf{12.53 / 148}
& \textbf{1.97 / 575} \\
\bottomrule
\end{tabular}
\end{table}

\subsection{Real-Data Implied-Volatility Smiles and Residuals}
\label{ssec:calib_figures}

Figure~\ref{fig:calib_smiles} shows market and model implied-volatility
smiles.  The B-spline CGMY model captures the shape of the low-volatility
and recovery surfaces well across the liquid maturities.  The crisis date is
visibly less stable: the 30d smile is systematically underestimated near the
money, and the 1w, 6m, and 1y panels are marked as fallback data because they
are not present after the primary $5\%$ spread filter.

\begin{figure}[htbp]
  \centering
  \includegraphics[width=\textwidth]{ 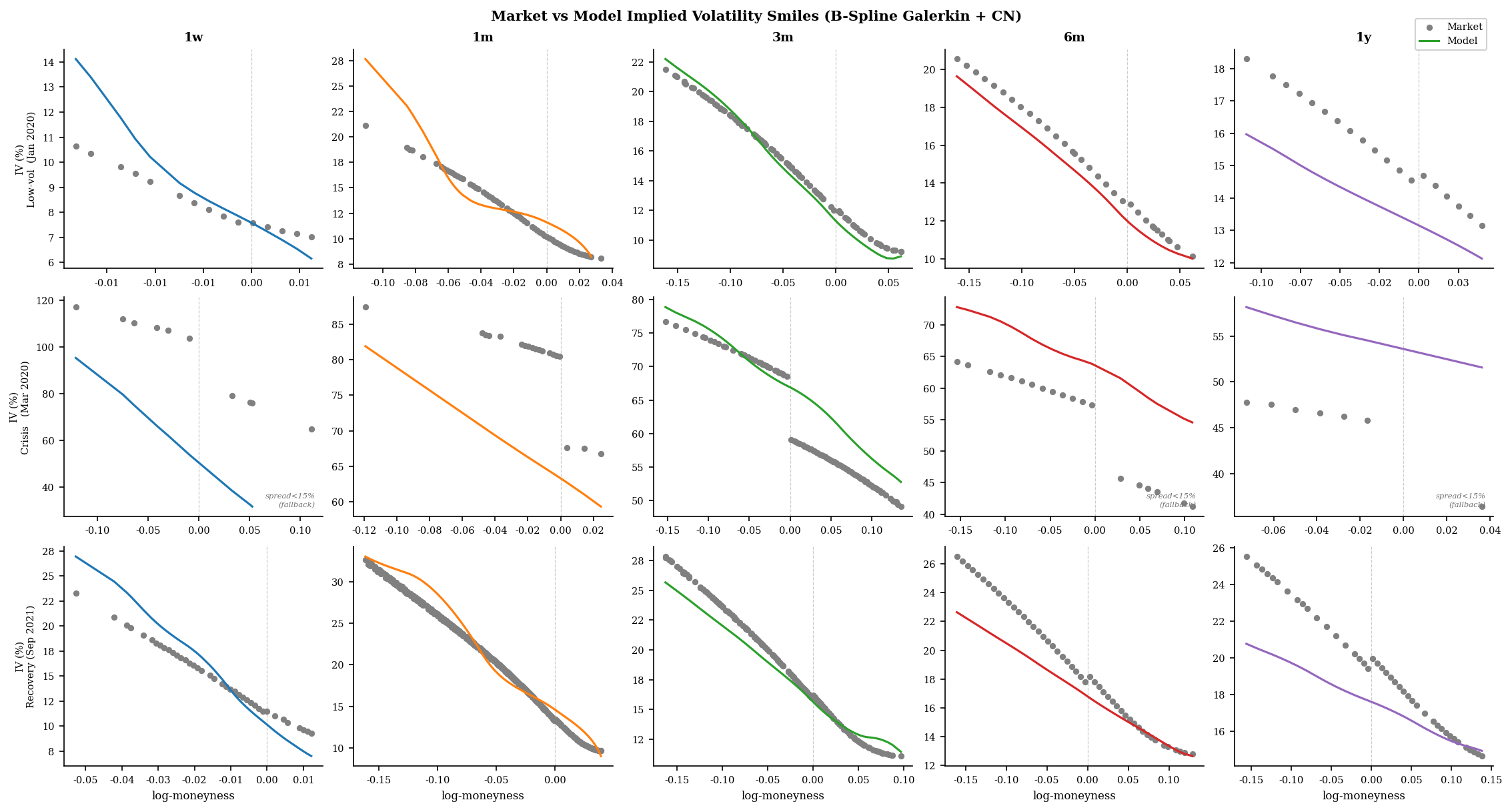}
  \caption{Market versus model implied-volatility smiles for the three SPX
  dates and five nominal tenors.  Grey markers are market quotes and coloured
  curves are B-spline Galerkin--CN CGMY prices converted to implied
  volatility.  For the crisis row, 1w, 6m, and 1y panels use $15\%$-spread
  fallback quotes and are therefore out-of-sample checks of the $5\%$ fit.}
  \label{fig:calib_smiles}
\end{figure}

The residual plot in Figure~\ref{fig:calib_residuals} gives the same
information quote by quote.  The low-volatility and recovery residuals are
mostly centred near zero, with larger deviations concentrated at very short
maturities.  The crisis residuals reveal the main model-data conflict: the
30d smile is too low by roughly 10--17 vol-points near the money, while the
out-of-sample 1w fallback residuals are much larger.  This confirms that the
crisis calibration should be read as a stress-test of the pipeline, not as a
well-identified five-maturity fit.

\begin{figure}[htbp]
  \centering
  \includegraphics[width=\textwidth]{ 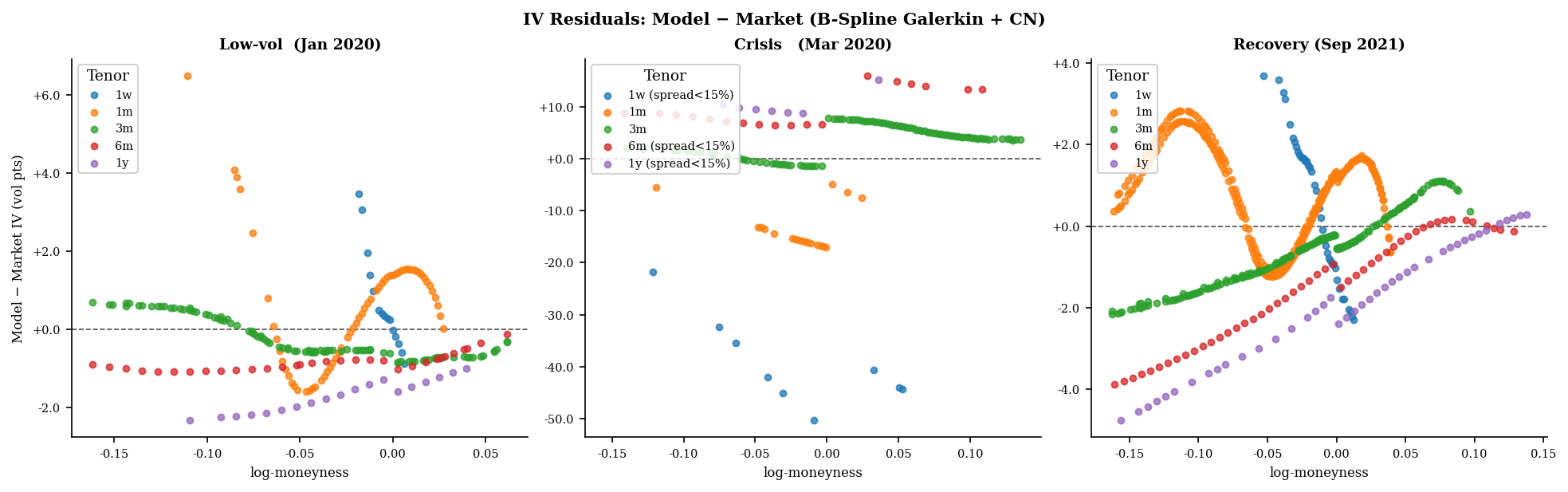}
  \caption{IV residuals, model minus market, in vol-points.  Each point is one
  OTM quote and colour denotes tenor.  The dashed line is zero residual.  The
  crisis 1w, 6m, and 1y residuals are fallback/out-of-sample diagnostics.}
  \label{fig:calib_residuals}
\end{figure}

\subsection{Financial Interpretation and Crisis Caveat}
\label{ssec:calib_interpretation}

The calibrated parameters have a consistent financial interpretation.  The
jump-intensity parameter rises sharply in the crisis regime:
\[
  C^*_{\mathrm{low}}=0.049,\qquad
  C^*_{\mathrm{crisis}}=0.543,\qquad
  C^*_{\mathrm{recovery}}=0.097.
\]
Thus the crisis value is approximately $11$ times the low-volatility value
and $5.6$ times the recovery value.  Since $C$ scales the L\'{e}vy density,
this indicates a much larger risk-neutral intensity of large moves during
March 2020.

The fine-structure index is comparatively stable:
\[
  Y^*_{\mathrm{low}}=1.089,
  \qquad Y^*_{\mathrm{crisis}}=1.050,
  \qquad Y^*_{\mathrm{recovery}}=1.050.
\]
The range is only $0.039$, about $3.6\%$ of the cross-date mean.  However,
two estimates sit at the lower bound $Y=1.05$, so they should be interpreted
as constrained estimates indicating a preference for a value near the lower
edge of the infinite-variation regime.

The tail parameters are less uniformly identified.  On the non-crisis dates,
$M^*=80$ hits the upper bound, which is consistent with limited information
about the far right tail inside the near-ATM moneyness window.  On the crisis
date, $G^*=0.1$ hits the lower bound, indicating an extremely heavy left tail
under the risk-neutral measure.  The crisis robustness check confirms that
$C^*$, $G^*$, and $Y^*$ are stable when the spread filter is relaxed from
$5\%$ to $15\%$, while $M^*$ changes substantially because longer-maturity
OTM calls re-enter the surface.

\begin{table}[htbp]
\centering
\caption{Crisis-date robustness: primary $5\%$ spread filter versus relaxed
$15\%$ filter.}
\label{tab:calib_crisis_robustness}
\begin{tabular}{lrrrr}
\toprule
Parameter & Spread $5\%$ & Spread $15\%$ & $|\Delta|$ & Relative change \\
\midrule
$C^*$ & 0.5431 & 0.5462 & 0.003 & 0.6\% \\
$G^*$ & 0.100 & 0.100 & 0.000 & 0.0\% \\
$M^*$ & 27.79 & 6.65 & 21.14 & 76.1\% \\
$Y^*$ & 1.050 & 1.050 & 0.000 & 0.0\% \\
\midrule
IV RMSE & 7.19\,\vp & 11.05\,\vp & 3.87\,\vp & 53.8\% \\
\bottomrule
\end{tabular}
\end{table}

The March 2020 result is therefore included for completeness but not treated
as the primary evidence of calibration accuracy.  The main evidence is the
low-volatility and recovery surfaces, where all five maturities survive the
primary quality filter and the CGMY model is meaningfully identified.

\subsection{Calibration Time and GL Benchmark}
\label{ssec:calib_speed}

Table~\ref{tab:calib_timing} records the wall-clock time of the B-spline
calibrations.  Five-maturity surfaces require about 4--5 minutes in the
reported Python implementation.  The crisis $5\%$ run is faster because only two maturities survive the
quality filter.

\begin{table}[htbp]
\centering
\caption{Wall-clock time per B-spline calibration run in the reported Python implementation.}
\label{tab:calib_timing}
\setlength{\tabcolsep}{6pt}
\begin{tabular}{llcrrrr}
\toprule
Date & Spread & Maturities & Quotes & DE evals & L-BFGS-B evals & Total \\
\midrule
2020-01-15 & $5\%$ & 5 & 224 & 9{,}420 & 235 & 255 s \\
2020-03-18 & $5\%$ & 2 & 113 & 11{,}340 & 170 & 120 s \\
2020-03-18 & $15\%$ & 5 & 243 & 10{,}500 & 130 & 275 s \\
2021-09-15 & $5\%$ & 5 & 575 & 9{,}360 & 265 & 261 s \\
\bottomrule
\end{tabular}
\end{table}

For context, we also run an accuracy-matched comparison with a
Gr\"unwald--Letnikov finite-difference discretisation of the same backward
CGMY FPDE on the crisis surface.  The optimizer settings are identical for
both methods.  The comparison is intentionally accuracy-matched at the
moderate tolerance needed to identify the calibration basin, not at the
asymptotic high-accuracy regime.

\begin{table}[htbp]
\centering
\caption{Accuracy-matched B-spline versus GL calibration benchmark on the
2020-03-18 crisis surface.  IV RMSE is measured in vol-points.}
\label{tab:calib_gl_benchmark}
\setlength{\tabcolsep}{5pt}
\begin{tabular}{lcccrrrr}
\toprule
Method & $N_{\mathrm{DE}}$ & $N_{\mathrm{LBFGS}}$
& $C^*$ & $G^*$ & $Y^*$ & IV RMSE & Time \\
\midrule
B-spline Galerkin & 32 & 64 & 0.543 & 0.100 & 1.050 & 7.19\,\vp & 115 s \\
GL finite difference & 64 & 128 & 0.582 & 0.100 & 1.050 & 6.66\,\vp & 20 s \\
\bottomrule
\end{tabular}
\end{table}

The GL solver is faster in this moderate-accuracy calibration experiment:
\[
  \alpha = \frac{T_{\mathrm{GL}}}{T_{\mathrm{BS}}}
  = \frac{20}{115}=0.17,
\]
so GL is about $5.8$ times faster.  The result should not be read as a
contradiction of the B-spline accuracy theory.  In this experiment, the
calibration target is coarse enough that first-order GL with twice as many
grid points reaches the correct basin.  For stricter pricing or risk targets,
the B-spline rate $O(h^{p-Y/2})$ becomes the decisive advantage over the
first-order GL discretisation.

\begin{figure}[htbp]
  \centering
  \includegraphics[width=0.58\textwidth]{ 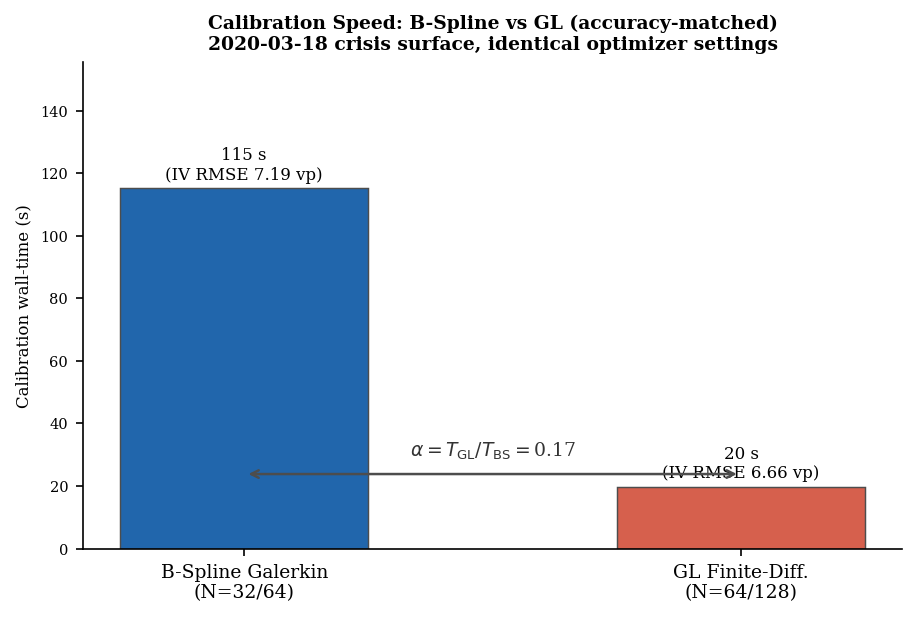}
  \caption{Accuracy-matched calibration wall-time on the 2020-03-18 crisis
  surface.  GL is faster in this moderate-accuracy calibration regime, while
  the B-spline scheme retains the higher-order spatial convergence advantage
  for stricter accuracy requirements.}
  \label{fig:calib_speed}
\end{figure}

\paragraph{Calibration takeaway.}
The real-data experiment shows that the proposed B-spline Galerkin solver can
be used as the deterministic pricing engine inside a complete CGMY calibration
pipeline.  It gives accurate fits on well-identified SPX surfaces, exposes the
identifiability limitations of the crisis surface, and provides smooth model
smiles and residual diagnostics across maturities.  The GL comparison gives a
balanced computational conclusion: GL is a useful fast baseline at moderate
accuracy, whereas the B-spline solver is the natural choice when high-order
spatial accuracy, smoother Greeks, or stricter pricing tolerances are required.

\section{Conclusion and Extensions}
\label{sec:conclusion}

We developed and analyzed a B-spline scaling-function Galerkin method for the
backward CGMY FPDE.  The corrected Fourier-symbol construction of the
tempered connection-coefficient matrices yields exact Toeplitz interior
blocks with the correct fractional scaling $2^{jY}$, $O(N\log N)$ FFT-based
application, and closed-form untempered limits as implementation checks.  In
the infinite-variation regime $Y\in(1,2)$ we proved full-line and
finite-domain G{\aa}rding inequalities --- the latter by a zero-extension
reduction that leaves no unverified coercivity assumption --- showed both to
be coercive whenever the risk-free rate is positive, and on this foundation
established shifted elliptic-projection, semidiscrete, and fully discrete
error estimates with spatial rate $O(h^{p-Y/2})$, together with a forced
shifted-energy Crank--Nicolson stability estimate that, since $\sigma=0$ is
admissible for $r>0$, governs the plain scheme exactly as implemented.  The
numerical experiments confirm the predicted rates, extend the same
discretisation to American puts through an $L^2$-projected obstacle and a
projected-SOR linear complementarity solve --- validated against a resolved
deterministic reference and a Longstaff--Schwartz Monte Carlo cross-check ---
and demonstrate real-data calibration on S\&P~500 surfaces across
low-volatility, crisis, and recovery regimes.  The work--precision and
crisis-date calibration benchmarks against a stabilized
Gr\"unwald--Letnikov baseline give a balanced computational conclusion: the
first-order baseline is faster at moderate accuracy, while the B-spline
solver is decisive when tight pricing tolerances, smooth spline-based Greeks,
or compatibility with the American obstacle problem are required.

Several extensions are natural.  A convergence theory for the parabolic
obstacle problem would put the American solver on the same rigorous footing
as the European scheme, and the finite-variation regime $Y\in(0,1)$ ---
covered by the connection-coefficient construction and the experiments but
not by the variational proofs --- calls for a separate drift-dominated
argument.  Beyond these, tensor-product B-spline spaces open multi-asset CGMY
pricing for basket options; the corresponding forward FPDE would deliver
transition densities for likelihood-based estimation directly in the scaling
basis; and the same connection-coefficient machinery adapts to other tempered
stable models such as VG and NIG, ideally combined with adaptive
multiresolution refinement near the payoff kink at $x=\log K$.

\section*{Acknowledgements}

Wharton Research Data Services (WRDS) was used in preparing this work. This service and the data available thereon constitute valuable intellectual property and trade secrets of WRDS and/or its third-party suppliers.


\appendix

\section{Fractional Operators: Notation and Key Identities}
\label{app:operators}

\begin{table}[ht]
\centering
\caption{Summary of fractional operators used in this paper.
Here $m = \lceil Y \rceil$ (the ceiling of $Y$).}
\label{tab:operators}
\small
\begin{tabular}{p{1.8cm} p{2.55cm} p{4.75cm} p{2.65cm}}
\toprule
Symbol & Name & Definition & Role \\
\midrule
$\RLleft{Y}{x} f$ &
Left-sided RL &
$\frac{1}{\Gamma(m-Y)}\frac{d^m}{dx^m}\int_{-\infty}^x \frac{f(y)}{(x-y)^{Y+1-m}}dy$ &
Backward FPDE (negative jumps) \\[8pt]
$\RLright{Y}{x} f$ &
Right-sided RL &
$\frac{(-1)^m}{\Gamma(m-Y)}\frac{d^m}{dx^m}\int_x^\infty \frac{f(y)}{(y-x)^{Y+1-m}}dy$ &
Backward FPDE (positive jumps) \\[8pt]
$\Cleft{Y}{x} f$ &
Left-sided Caputo &
$\frac{1}{\Gamma(m-Y)}\int_{-\infty}^x \frac{f^{(m)}(y)}{(x-y)^{Y+1-m}}dy$ &
Forward Kolmogorov FPDE \\[8pt]
${}^C_{x}D^Y_\infty f$ &
Right-sided Caputo &
$\frac{(-1)^m}{\Gamma(m-Y)}\int_x^\infty \frac{f^{(m)}(y)}{(y-x)^{Y+1-m}}dy$ &
Forward Kolmogorov FPDE \\
\bottomrule
\end{tabular}
\end{table}

Key identity (RL--Caputo relationship,
see \cite{Fallahgoul2017}, Proposition 2):
\begin{equation}
\Cleft{Y}{x} f(x)
= \RLleft{Y}{x} f(x)
- \sum_{k=0}^{m-1}\frac{f^{(k)}(-\infty)}{\Gamma(k-Y+1)}(x-(-\infty))^{k-Y}.
\end{equation}
When $f$ and its first $m-1$ derivatives vanish at $-\infty$
(as for the CGMY pricing solution), the two operators coincide.

Fourier representation:
\begin{equation}
\widehat{\RLleft{Y}{x} f}(\xi) = (i\xi)^Y\,\hat{f}(\xi),
\qquad
\widehat{\RLright{Y}{x} f}(\xi) = (-i\xi)^Y\,\hat{f}(\xi),
\end{equation}
where $(i\xi)^Y = |\xi|^Y\exp\!\left(i\frac{\pi Y}{2}\mathrm{sgn}(\xi)\right)$.

\bibliographystyle{siamplain}
\bibliography{biblib}
\end{document}